\documentclass[12pt]{article}
\usepackage{authblk} 

\usepackage[T1]{fontenc}
\usepackage[utf8]{inputenc}
\usepackage[english]{babel}
\usepackage{enumerate,amsmath,amsthm}
\usepackage{amssymb}
\usepackage{booktabs}
\usepackage{bookman}
\usepackage{float} 
\usepackage{longtable} 
\usepackage{enumitem,etoolbox,hyperref}
\usepackage{needspace}
\hypersetup{
    colorlinks=true,
    linkcolor=blue,
    citecolor=magenta,
    urlcolor=teal,
    pdfborder={0 0 0},
}
\usepackage{eulervm} 
\usepackage{tabu}
\usepackage{multirow}
\usepackage{graphicx} 
\usepackage{wrapfig} 
\usepackage[font=small,labelfont=bf]{caption} 
\usepackage{mathtools}
\graphicspath{ {pictures/} } 
\usepackage{placeins} 
\allowdisplaybreaks 
\usepackage{tikz} 
\usetikzlibrary{shapes,shapes.geometric,patterns,decorations,positioning,calc}

\tikzset{
    vertex/.style={minimum size=1.5em},
    edge/.style={->,> = latex'},
  pair/.style={{Circle[length=4pt]}-{Circle[length=4pt]},
    shorten >=-3.5pt,
    shorten <=-3.5pt}
  }
\usetikzlibrary{arrows}
\usetikzlibrary{arrows.meta}
\usepackage{xcolor} 
\usepackage{colortbl} 
\usepackage{arydshln} 
\definecolor{Gray}{gray}{0.9} 
\definecolor{mycolor}{RGB}{255, 240, 210} 

\definecolor{diagramblue}{gray}{0.22}
\definecolor{diagramorange}{gray}{0.00}
\definecolor{diagramlightblue}{gray}{0.97}
\definecolor{diagramlightorange}{gray}{0.88}
\definecolor{diagramgray}{gray}{0.50}

\usepackage{hhline}

\usepackage[final]{listings}

\newcommand*{\coreT}{core(T)}
\newcommand*{\shellT}{shell(T)}

\providecommand{\Aut}{\operatorname{Aut}}
\providecommand{\fall}[2]{(#1)_{#2}}

\newcommand{\qthree}{q_3}
\newcommand{\qfour}{q_4}

\newcommand{\Dt}{D(t)}
\newcommand{\Et}{E(t)}
\newcommand{\xt}{x(t)}
\newcommand{\GaussianCore}{\mathcal N}
\newcommand{\ZeroMark}{\mathcal O}

\newcolumntype{Y}{>{\raggedright\arraybackslash}X}
\newcolumntype{C}[1]{>{\centering\arraybackslash}p{#1}}

\newlength{\RoundedBoxWidth}
\newsavebox{\GrayRoundedBox}
\newenvironment{GrayBox}[1][\dimexpr\textwidth-4.5ex]%
   {\setlength{\RoundedBoxWidth}{\dimexpr#1}
    \begin{lrbox}{\GrayRoundedBox}
       \begin{minipage}{\RoundedBoxWidth}}%
   {   \end{minipage}
    \end{lrbox}
    \begin{center}
    \begin{tikzpicture}%
       \draw node[draw=black,fill=black!10,rounded corners,%
             inner sep=2ex,text width=\RoundedBoxWidth]%
             {\usebox{\GrayRoundedBox}};
    \end{tikzpicture}
    \end{center}}

\newenvironment{implementationbox}
  {\begin{GrayBox}\textbf{Implementation invariant.}\par\smallskip}
  {\end{GrayBox}}

\newcommand{\ru}[1]{\rule{0pt}{#1 em}}

\input{mycommands.sty}

\usepackage{comment}

\newcommand{\bunderline}[1]{\mkern2mu\underline{\mkern-2mu #1\mkern-4mu}\mkern4mu }

\newlength\titlebox 

\theoremstyle{plain}
\newtheorem{theorem}{Theorem}
\newtheorem{lemma}[theorem]{Lemma}
\newtheorem{corollary}[theorem]{Corollary}
\newtheorem{proposition}[theorem]{Proposition}

\newtheorem*{no-lemma}{Lemma}

\theoremstyle{definition}
\newtheorem{definition}[theorem]{Definition}

\theoremstyle{remark}
\newtheorem{remark}[theorem]{Remark}
\newtheorem{example}[theorem]{Example}
\usepackage{cleveref}

\usepackage{tocloft}
\input{amsbibstyle.sty}
\title{The Sixth Moment of Random Determinants for Arbitrarily Distributed Random Entries}
\author[1]{Dominik Beck}
\affil[1]{\small Charles University, Faculty of Mathematics and Physics, Prague 116 36}
\affil[ ]{\textit {\href{mailto:beckd@karlin.mff.cuni.cz}{beckd@karlin.mff.cuni.cz}}}
 \author[2]{Zelin Lv}
 \author[2]{Aaron Potechin}
 \affil[2]{\small The University of Chicago, Chicago, IL 60637}
 \affil[ ]{\textit{\href{mailto:zlv@uchicago.edu}{zlv@uchicago.edu}},
 \textit{\href{mailto:potechin@uchicago.edu}{potechin@uchicago.edu}}}
\date{\today}

\begin{document}
\maketitle
\begin{abstract}
Via the method of marked permutation tables presented in this paper, we generalize the formula for the sixth moment of a random determinant to account for entries with arbitrary distribution. That is, let $f_6(n) = \mathbb{E}(\det A)^6$, where $A$ is an $n$ by $n$ random matrix with independent and identically distributed entries. We show that the exponential generating function $F_6(t) = \sum_{n=0}^\infty f_6(n)t^n/(n!)^2$ is D-finite and we present it in a closed form. Our method relies on carefully decomposing marked permutation tables into a shell, a core, and a floating component, each of which has a separate contribution to $F_6(t)$. After this decomposition, it is sufficient to enumerate over a finite number of possible shells, which we did using a highly intricate computer program. 
We verified our result up to $n = 7$ in the general case and up to $n = 9$ for random matrices whose entries only take two values by using a different method for computing $f_6(n)$ for these cases.
\end{abstract}

\tableofcontents

\section{Introduction}
Let $X_{ij}$ be i.i.d. random variables with distribution $\Omega$ and let $m_q = \expe{X_{11}^q}$ be the moments of $\Omega$. For a random matrix $A = (X_{ij})_{n \times n}$ with i.i.d. entries distributed according to $\Omega$, our objective is to study the $k$-th moment of its determinant 
\[
f_k(n) = \expe{(\det A)^k},
\]
or equivalently the generating functions $$F_k(t) = \sum_{n=0}^\infty \frac{f_k(n)}{n!^2} t^n,$$ from which $f_k(n)$ follow simply by Taylor expansion. Although the problem of deducing $f_k(n)$ is simple to pose, only a handful of results were known prior to our own investigation. For odd $k$, the anti-symmetry of $\det A$ enforces $f_k(n)$ to vanish, so only even $k$ have nontrivial contribution. It is trivial to show (c.f. \cite{stanley_fomin_1999}, \cite{fortet1951random}),
\begin{align}
    F_2(t) & = (1+m_1^2t)e^{(m_2 - m_1^2)t},\\
    f_2(n) & = n! (m_2 + m_1^2(n-1)) (m_2 - m_1^2)^{n-1}.
\end{align}
In general, the determinant moments $f_k(n)$ are polynomials in $m_1,m_2,\ldots,m_k$. As we will see, it will be convenient to express these polynomials in the variables $m_1,\mu_2,\ldots,\mu_k$ where $\mu_q = \expe{(X_{11}-m_1)^q}$ is the $q-$th \emph{central moment} of $\Omega$. Concretely, we have
\begin{equation}
\begin{split}
\mu_2 &= m_2-m_1^2,\\
\mu_3 & = m_3-3 m_1 m_2+2 m_1^3,\\
\mu_4 & = m_4-4 m_1 m_3+6 m_1^2 m_2-3 m_1^4 \\
\mu_5 & = m_5-5 m_1 m_4+10 m_1^2 m_3-10 m_1^3 m_2+4 m_1^5,\\ \mu_6 & = m_6-6 m_1 m_5+15 m_1^2 m_4-20 m_1^3 m_3+15 m_1^4 m_2-5 m_1^6. 
\end{split}    
\end{equation}
The problem of determining determinant moments starts to become highly nontrivial for higher moments. Nyquist, Rice and Riordan \cite{nyquist1954distribution} derived the special case of $k = 4$ with $m_1 = 0$. The general $k=4$ case imposing no condition on $m_1$ was derived only recently in \cite{beck2022fourth} by one of the authors.
\begin{equation*}
F_4(t) = \frac{e^{t(\mu_4-3 \mu _2^2)}}{(1-\mu_2^2 t)^3} \left((1+m_1\mu_3 t)^4+6m_1^2\mu_2 t\frac{(1+m_1 \mu_3t)^2}{1-\mu _2^2 t}+m_1^4 t\frac{1+7 \mu_2^2 t+4 \mu _2^4 t^2}{(1-\mu _2^2 t)^2}\right),
\end{equation*}
from which one can deduce, denoting $d_0(c) = 2+c$, $d_1(c) = c(2+c)$ and $d_2(c) = c^3$, that
\begin{equation*}
f_4(n) = n!^2 \sum _{w=0}^2 \sum _{s=0}^{4-2w} \sum _{c=0}^{n-s} \binom{4-2 w}{s}\frac{(1+c) m_1^{s+2 w} \mu_2^{2c-w} \mu _3^s \left(\mu _4-3\mu_2^2\right){}^{n-c-s}}{(n-c-s)!(2-w)! w!} d_w(c).
\end{equation*}
Furthermore, 
we solved the special case of $k = 6$ with $m_1 = 0$ in a previous paper \cite{BLP23}. For this special case, we showed that 
\begin{equation}\label{Eq:F6m0}
    F_6(t)|_{m_1 = 0} = (1+m_3^2t)^{10}\,\frac{e^{t \left(m_6-15m_4m_2-10m_3^2 + 30m_2^3\right)}}{\left(1+3m_2^3t-m_4m_2t\right)^{15}} N\left(\frac{m_2^3t}{\left(1+3m_2^3t-m_4m_2t\right)^{3}}\right),
\end{equation}
where
\begin{equation}\label{Eq:N}
N(t) = \frac{1}{48}\sum_{n=0}^\infty (n+1)(n+2)(n+4)! \,t^n
\end{equation}
is the exponential generating function for the sixth moment of the determinant of a random matrix with i.i.d. Gaussian entries \cite{nyquist1954distribution}\cite{dembo1989random} (denoted as $N_6(t)$ in \cite{BeckPhD}\cite{BLP25}). Defining $q_6 = m_6 - 10m_3^2- 15m_4m_2+ 30m_2^3$ and $q_4 = m_4m_2- 3m_2^3$, it follows that
\begin{equation*}
f_6(n)|_{m_1=0} = n!^2 \sum _{j=0}^n \sum _{i=0}^j \sum_{k=0}^{n-j} \frac{(1+i) (2+i) (4+i)! }{48 (n-j-k)!}\binom{10}{k}\binom{14+j+2i}{j-i} q_6^{n-j-k} q_4^{j-i}m_3^{2k} m_2^{3i}.
\end{equation*}
Both of our previous papers \cite{beck2022fourth}\cite{BLP23} use recursions as their main tool as used in prior works. 
However, this method 
ignores some of the internal structure and doesn't give good intuition for 
why the generating functions look like they do. A more effective way of 
thinking about the problem starts by reframing it 
in terms of analyzing certain combinatorial objects associated with the random determinants and by employing the algebraic rules for compositions of generating functions of the components which they are formed from. The combinatorial objects in question are the permutation tables associated to tuples of determinants and the components of these tables are various structures formed by the columns of the table. As simple as this bijection may seem, this simple change of perspective has significantly improved our understanding of the moments of the determinant of a random matrix.  
In the course of the past few years, there has been a clear crystalisation of this permutation tables technique, see our summary paper \cite{BLP25} for a quick introduction-overview or see Chapter 2 of PhD thesis \cite{BeckPhD} of one of the authors where the technique was introduced and used to derive $F_6(t)$ for arbitrary $m_1$ but assuming $\mu_3 = 0$.

\subsection{Main result}
The main result of our paper is computing $F_6(t)$ for any underlying distribution $\Omega$ of entries. Note that we only need to consider $\mu_2 = 1$, that is $F_6(t)|_{\mu_2 = 1}$. Let
\begin{equation}
F_6(t) = \Psi(t,m_1,\mu_2,\mu_3,\mu_4,\mu_5,\mu_6)
\end{equation}
for some function $\Psi$, then also (see \cite{BeckPhD}),
\begin{equation}
F_6(t) = \Psi\left(\mu_2^3 t,\frac{m_1}{\sqrt{\mu _2}},1,\frac{\mu _3}{\mu _2^{3/2}},\frac{\mu _4}{\mu _2^2},\frac{\mu_5}{\mu _2^{5/2}},\frac{\mu _6}{\mu _2^3}\right),
\end{equation}
but $\Psi(t,m_1,1,\mu_3,\mu_4,\mu_5,\mu_6) = F_6(t)|_{\mu_2 = 1}$. For the moments themselves, let
\begin{equation}
f_6(n) = \psi(n,m_1,\mu_2,\mu_3,\mu_4,\mu_5,\mu_6)
\end{equation}
for some function $\psi$, then also
\begin{equation}
f_6(n) = \mu_2^{3n} \psi\left(n,\frac{m_1}{\sqrt{\mu _2}},1,\frac{\mu _3}{\mu _2^{3/2}},\frac{\mu _4}{\mu _2^2},\frac{\mu_5}{\mu _2^{5/2}},\frac{\mu _6}{\mu _2^3}\right),
\end{equation}
but $\psi(n,m_1,1,\mu_3,\mu_4,\mu_5,\mu_6) = f_6(n)|_{\mu_2 = 1}$. Now we are ready to state our main result:
\begin{theorem}
For any distribution $\Omega$ with $\mu_2 = 1$,
\begin{equation}\label{Eq:F6Zsum}
F_6(t)|_{\mu_2 = 1} = \sum _{r=0}^6 m_1^r \left(1+\left(\mu _5-10\mu_3\right) m_1 t\right){}^{6-r} Z_r(t).
\end{equation}
Denote $x = 1/(1-\left(\mu _4-3\right) t)$ and $y = \mu_3^2/(1+\mu _3^2 t)$, then $Z_r(t) =$
\begin{equation*}
 \mu _3^{(r \bmod 2)} \left(1+t \mu _3^2\right){}^{10-(r \bmod 2)} e^{t \left(\mu_6-15\mu_4-10\mu_3^2 + 30\right)} x^{21-2 r} \sum_{s=0}^{2\lfloor r/2\rfloor} y^s \left(p_{rs}(t) N\left(t x^3\right)+q_{rs}(t) N'\left(t x^3\right)\right),
\end{equation*}
where $\quad p_{00}(t) = 1/x^6,\quad q_{00}(t) = 0,\quad p_{10}(t) = 60 t/x^3,\quad q_{10}(t) = 12 t^2$,

\begin{align*}
& p_{20}(t) = 15 (x-1)/x^2, && p_{30}(t) = 20 t \left(21 x^2-42 x+4\right),\\
& p_{21}(t) = 630 t^2, && p_{31}(t) = 480 t^2 \left(7 t x^3+3 x-5\right),\\
& p_{22}(t) = -630 t^3, && p_{32}(t) = 480 t^3 \left(2-7 t x^3\right),\\
& q_{20}(t) = 3 t x^2, && q_{30}(t) = 12 t x^2 \left(7 t (x-2) x^2+2\right),\\
& q_{21}(t) = 6 t^2 \left(21 t x^3-2 x+10\right), && q_{31}(t) = 16 t^2 \left(42 t^2 x^6\!+\!18 t x^4\!-\!11 t x^3\!-\!6 x\!+\!10\right),\\
& q_{22}(t) = -6 t^3 \left(21 t x^3+8\right), && q_{32}(t) = -32 t^3 \left(21 t^2 x^6+5 t x^3+2\right),
& &&\\[2ex]
& p_{40}(t) = 15 x^2 ((4 t-6) x+3), && q_{40}(t) = 3 x^4 \left(4 t^2 x^2-2 t (3 x+2) x+1\right),\\
& p_{41}(t) = 60 t x^2 \begin{Bmatrix}84 t x^3-147 t x^2\\+(44 t+6) x-18\end{Bmatrix}, && q_{41}(t) = 12 t x^2 \begin{Bmatrix}84 t^2 x^6-147 t^2 x^5\\+t (44 t+6) x^4+18 t x^3\\-50 t x^2-2 x+6\end{Bmatrix},\\
& p_{42}(t) = 180 t^2 \begin{Bmatrix}\!42 t^2 x^6\!-\!28 t x^5\!+\!105 t x^4\!\\-108 t x^3+24 x-20\end{Bmatrix}, && q_{42}(t) = 12 t^2 \begin{Bmatrix}126 t^3 x^9-84 t^2 x^8+315 t^2 x^7\\-282 t^2 x^6-44 t x^5+192 t x^4\\-84 t x^3-24 x+20\end{Bmatrix},\\
& p_{43}(t) = -720 t^3 \begin{Bmatrix}21 t^2 x^6+14 t x^4\\-38 t x^3+4 x-4\end{Bmatrix}, && q_{43}(t) = -48 t^3 \begin{Bmatrix}63 t^3 x^9+42 t^2 x^7-93 t^2 x^6\\+22 t x^4+2 t x^3-4 x+4\end{Bmatrix},\\
& p_{44}(t) = 2520 t^5 x^3 \left(3 t x^3-4\right), && q_{44}(t) = 168 t^5 x^3 \left(9 t^2 x^6-9 t x^3+4\right),
\end{align*}

\begin{align*}
p_{50}(t) & = -60 x^4 \left(t \left(21 x^2-43 x+4\right)+3\right),\\
p_{51}(t) & = 720 t x^2 \left(42 t^2 x^6-70 t^2 x^5+7 t (4 t+1) x^4-31 t x^3+18 t x^2+2 x-2\right),\\
p_{52}(t) & = -720 t^2 \left(42 t^2 x^8-126 t^2 x^7+98 t^2 x^6-21 t x^5+4 t x^4+8 t x^3+4 x^2-8 x+4\right),\\
p_{53}(t) & = -20160 t^4 x^3 \left(2 t x^4-4 t x^3+x-1\right),\\
p_{54}(t) & = -30240 t^6 x^6,\\
q_{50}(t) & = -12 x^4 \left(t^2 \left(21 x^3-43 x^2-8 x-4\right) x^2+t \left(x^2+10 x+4\right) x-1\right),\\
q_{51}(t) & = 48 t x^2 \begin{Bmatrix}126 t^3 x^9-210 t^3 x^8+21 t^2 (4 t+1) x^7-51 t^2 x^6\\-35 t^2 x^5+t (20 t+11) x^4+13 t x^3-18 t x^2-2 x+2\end{Bmatrix},\\
q_{52}(t) & = -48 t^2 \begin{Bmatrix} 126 t^3 x^{11}-378 t^3 x^{10}+294 t^3 x^9-21 t^2 x^8-162 t^2 x^7\\+94 t^2 x^6+57 t x^5-76 t x^4+28 t x^3-4 x^2+8 x-4\end{Bmatrix},\\
q_{53}(t) & = -1344 t^4 x^3 \left(6 t^2 x^7-12 t^2 x^6+7 t x^4-5 t x^3-x+1\right),\\
q_{54}(t) & = -2016 t^6 x^6 \left(3 t x^3-1\right),\\
&\\
p_{60}(t) & = x^6 \left(4 t^2 \left(105 x^2-255 x+64\right)+15 t (15 x-1)-15\right)/t,\\
p_{61}(t) & = -10 x^4 \left(504 t^2 x^5-777 t^2 x^4-3 t (64 t-81) x^3+4 t (46 t-27) x^2-18 (4 t+1) x+18\right),\\
p_{62}(t) & = 30 t x^2 \begin{Bmatrix} 2520 t^3 x^9-4284 t^3 x^8+56 t^2 (32 t+15) x^7-2919 t^2 x^6\\+3 t (648 t+35) x^5-8 t (28 t-27) x^4-504 t x^3+24 (9 t-1) x^2+48 x-24\end{Bmatrix},\\
p_{63}(t) & = -240 t^2 \begin{Bmatrix} 630 t^3 x^{11}-1449 t^3 x^{10}+7 t^2 (109 t+12) x^9-525 t^2 x^8+408 t^2 x^7\\+(59-48 t) t x^6-93 t x^5+36 t x^4-2 (t+2) x^3+12 x^2-12 x+4 \end{Bmatrix},\\
p_{64}(t) & = 840 t^4 x^3 \left(90 t^2 x^8-369 t^2 x^7+274 t^2 x^6-51 t x^5+12 t x^4+20 t x^3+12 x^2-24 x+12\right),\\
p_{65}(t) & = 5040 t^6 x^6 \left(18 t x^4-25 t x^3+6 x-6\right),\\
p_{66}(t) & = 25200 t^8 x^9,\\
q_{60}(t) & = x^6 \left(12 t^2 (7 x-17) x^4+t \left(45 x^2+44 x+36\right) x^2+(1/t)-\left(8 x^2+3 x+12\right) x\right),\\
q_{61}(t) & = 2 x^4 \begin{Bmatrix} -504 t^3 x^8+777 t^3 x^7+3 t^2 (64 t-81) x^6+2 t^2 (18 t-155) x^5\\+6 t (37 t+9) x^4+3 t (8 t+9) x^3+12 t (2 t-3) x^2-6 (4 t+1) x+6\end{Bmatrix},\\
q_{62}(t) & = 2 t x^2 \begin{Bmatrix}7560 t^4 x^{12}-12852 t^4 x^{11}+168 t^3 (32 t+15) x^{10}-6237 t^3 x^9\\+9 t^2 (172 t+35) x^8-1968 t^2 x^6+4 t^2 (71 t+564) x^7-3 t (72 t+107) x^5\\+8 t (34 t+27) x^4+288 t x^3-24 (9 t-1) x^2-48 x+24\end{Bmatrix},\\
q_{63}(t) & = -16 t^2 \begin{Bmatrix}1890 t^4 x^{14}-4347 t^4 x^{13}+21 t^3 (109 t+12) x^{12}-945 t^3 x^{11}-225 t^3 x^{10}\\+t^2 (389 t+351) x^9-24 t^2 x^8-372 t^2 x^7+t (126 t-95) x^6\\+201 t x^5-144 t x^4+(38 t+4) x^3-12 x^2+12 x-4\end{Bmatrix},\\
q_{64}(t) & = 56 t^4 x^3 \begin{Bmatrix}270 t^3 x^{11}-1107 t^3 x^{10}+822 t^3 x^9-63 t^2 x^8-333 t^2 x^7\\+242 t^2 x^6+159 t x^5-228 t x^4+88 t x^3-12 x^2+24 x-12\end{Bmatrix},\\
q_{65}(t) & = 336 t^6 x^6 \left(54 t^2 x^7-75 t^2 x^6+36 t x^4-29 t x^3-6 x+6\right),\\
q_{66}(t) & = 1680 t^8 x^9 \left(3 t x^3-1\right).
\end{align*}
\end{theorem}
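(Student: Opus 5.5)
Expanding all six determinants turns $f_6(n)$ into a sum over \emph{permutation tables}: $6\times n$ arrays whose $k$-th row is a permutation $\sigma_k$ of $[n]$, weighted by $\prod_k \operatorname{sgn}(\sigma_k)$ times $\mathbb{E}\prod_{j}\prod_{k} X_{\sigma_k(j)\, j}$. Before expanding, I would write $X_{ij}=m_1+Y_{ij}$ with $Y_{ij}$ centred and $\mathbb{E}Y_{ij}^2=\mu_2=1$. Multilinearity then expresses each $\det A$ as a sum over choices of at most one column of the matrix to be replaced by the all-$m_1$ column; choosing two such columns makes the determinant vanish. Each row of the table therefore carries at most one \emph{mark}, meaning a position where the entry $m_1$ is used instead of $Y$. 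The number $r\in\{0,\dots,6\}$ of marked rows gives the factor $m_1^r$ in \eqref{Eq:F6Zsum}, and $F_6(t)|_{\mu_2=1}=\sum_r m_1^r(\cdots)$ is a sum over marked permutation tables. The expectation factorizes over the cells $(i,j)$ of the matrix. Each cell contributes $\mathbb{E}Y^{q}$, where $q$ is the number of unmarked rows $k$ with $\sigma_k(j)=i$, and this factor is a central moment $\mu_q$ (zero if $q=1$).

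The next step is the decomposition. Each column $j$ of the table induces a set partition of the six rows by equal values $\sigma_k(j)$. I would classify the columns into three groups.
\begin{itemize}
\item \emph{Floating} columns are those where some value is shared by all six unmarked rows, or by a $(5,1)$ or $(3,3)$ pattern, and which interact with nothing else. They produce the exponential factor $e^{t(\mu_6-15\mu_4-10\mu_3^2+30)}$ together with the $(1+\mu_3^2t)^{10}$ factor, via the standard cumulant-type inclusion–exclusion already seen in \eqref{Eq:F6m0}.
\item The \emph{core} consists of columns of pair type $(2,2,2)$, with the $(4,2)$ corrections absorbed into the substitution $t\mapsto tx^3$. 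This part reproduces the Gaussian generating function $N(tx^3)$, with the prefactor $x^{21}$ coming from the $m_1=0$ result \eqref{Eq:F6m0} after setting $\mu_2=1$.
\item The \emph{shell} is the bounded region of the table affected by the marks and by any odd-moment cells ($\mu_3,\mu_5$) attached to the marks.
\end{itemize}
Because each of the six rows has at most one mark, the shell involves only a finite number of rows and columns. Attaching a single mark to a floating $(5,1)$-type column yields the factor $(1+(\mu_5-10\mu_3)m_1t)$, once per unmarked row, which explains the power $6-r$.

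The composition rule for exponential generating functions (labelled products of species) then gives $Z_r(t)$ as a finite sum over shell types. Each shell type contributes a rational function in $t$, $x$ and $y$, multiplied by $N(tx^3)$ or by a derivative of $N$. The derivatives arise because the shell can be inserted into a core cycle structure at a distinguished point, which acts like the operator $t\,d/dt$ applied to the core. Parity explains the factor $\mu_3^{r\bmod 2}$: an odd number of marks forces an odd number of $\mu_3$ cells. The geometric factors $y^s$, with $s\le 2\lfloor r/2\rfloor$, come from chains of $\mu_3$-pairs linking marked rows, which resum to $\mu_3^2/(1+\mu_3^2t)$.

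The main obstacle is the finite but enormous enumeration of shells for $r=4,5,6$. Each shell must be weighted with its correct sign, symmetry factor, and attachment multiplicity to the core. I would carry this out by computer. For each shell I would generate the partial column patterns, compute the induced sign and moment weight, and record the resulting polynomial in $t$ and $x$. I would then sum the contributions and reduce the derivatives of $N$ to combinations of $N$ and $N'$, using the second-order linear ODE that $N$ satisfies (it is D-finite). This yields the explicit $p_{rs}$ and $q_{rs}$. Finally, I would cross-check by expanding the closed form and comparing with $f_6(n)$ computed directly, for example by brute force for small $n$, and by specializing to $m_1=0$ to recover \eqref{Eq:F6m0}.
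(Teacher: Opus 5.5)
Your outline matches the paper's architecture: centring the entries, marked tables with at most one mark per row, a floating part, a Gaussian-type core, a finite shell enumerated by computer, and reduction of $N^{(k)}$ to $N,N'$ via the ODE. Your explanations of $(1+(\mu_5-10\mu_3)m_1t)^{6-r}$ and of $\mu_3^{r \bmod 2}$ are correct.

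The genuine gap is how the shell couples to the core. They are not a labelled product. Every shell element that does not occupy all six rows must be completed inside the core, so shell and core share elements. The product rule for $\sum_n f_6(n)t^n/(n!)^2$ needs disjointness in both columns and elements, so it does not apply here. Your substitute, that the shell is ``inserted into a core cycle structure at a distinguished point'' acting like $t\,d/dt$, is a heuristic that misses what actually happens. The derivative order is the shell's excess $k_S$, its number of elements minus its number of columns. This is already $2$ for the one-column shell $(\times a,\times a,\alpha,\alpha,\beta,\beta)$, and the coefficient is a signed count of cores. Without a precise statement here, your enumeration has nothing well defined to compute on the core side.

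The paper supplies two ingredients:
\begin{enumerate}
\item The sign factorisation $\mathrm{sign}(T)=\mathrm{sign}(S)\,\mathrm{sign}(C)$ relative to canonical tables. This makes $Weight_{Cores}(S)=\sum_C\mathrm{sign}(C)$ over \emph{minimal} compatible cores well defined.
\item A key lemma: the signed weight $W_S(y')$ of cores with $y'$ columns compatible with $S$ satisfies $W_S(y')=y'(y'+2)(y'+4)\,W_S(y'-1)$ for $y'>k_S$, independently of $S$. It is proved by placing the six copies of the newest element in one column ($15y'$), four in one column and two in another ($9y'(y'-1)$), or two in each of three columns ($y'(y'-1)(y'-2)$, which needs the signed cancellation Lemma A.2 of \cite{BLP23}).
\end{enumerate}
Only with these does the core sum collapse to $Weight_{Cores}(S)\,t'^{k_S}N^{(k_S)}(t')/\prod_{j=1}^{k_S}j(j+2)(j+4)$ with $t'=tx^3$.

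Three further steps also need repair.
\begin{enumerate}
\item ``At most one mark per row'' does not make the shell finite. The unmarked part of a marked column can be split ($4$--$2$ or $3$--$3$) with arbitrarily long paths of known $\underline{4}$-columns or of $(3,3)$-columns threaded between the pieces. Every pair can also carry a chain of known $\underline{4}$-columns. The paper first merges these away (the reduced table $T_{red}$) and then resums them into the column factors $G(\mathbf{c})$. This contributes $\frac{(\mu_4-3)t}{1-(\mu_4-3)t}$ per $4$--$2$ split, $\frac{-\mu_3^2t}{1+\mu_3^2t}$ per $3$--$3$ split, and $\frac{1}{1-(\mu_4-3)t}$ per unmarked pair. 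You account only for the $\mu_3$ chains.
\item Your floating part is misidentified. An unmarked $(5,1)$ column has weight $0$. The floating component consists of the known $\underline{6}$-columns together with \emph{cycles} of known $\underline{4}$-columns and of $(3,3)$-columns. It gives the factor $O(t)/N(t')$, which contains $x^{15}$; the $x^{21-2r}$ in the statement is only a normalisation absorbed into $p_{rs},q_{rs}$.
\item The replacement rules producing the weights $\mu_4-3$, $\mu_6-15$ and $\mu_5-10\mu_3$ must be set up explicitly. The $10m_1\mu_3$ remainder of each $\times^1_5$ column re-enters the shell as $\times^1_3$ columns.
\end{enumerate}
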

The form of our result above is not particularly illuminating and even the functions $p_{rs}(t)$ and $q_{rs}(t)$ do not carry any specific meaning as they were introduced for the sole purpose to make our result (machine) readable. However, we will show that $Z_0(t),\ldots,Z_6(t)$ do carry specific meaning and thus can be obtained as concrete linear combinations of derivatives of $N(t)$ whose coefficients are compositions of rational and exponential functions associated with permutation tables. The fact that we only need the first two derivatives of $N(t)$ to express the full $F_6(t)$ follows from $N(t)$ satisfying the following ordinary differential equation whose correctness can be easily verified:
\begin{equation}
    t^2 N''(t)-(1-9t) N'(t)+15 N(t)=0.
\end{equation}
In the rest of our paper, we will assume $\mu_2 = 1$ (if not stated otherwise). Also, it will be convenient to 
write $O(t) = F_6(t)|_{m_1 = 0}$ as a function of $\mu_q$'s, that is, by Equation \eqref{Eq:F6m0},
\begin{equation}\label{Eq:O}
O(t) = (1+\mu_3^2t)^{10}\,\frac{e^{t (\mu_6-15\mu_4-10\mu_3^2 + 30)}}{\left(1+3t-\mu_4t\right)^{15}} N\left(\frac{t}{\left(1+3t-\mu_4t\right)^{3}}\right)
\end{equation}
or simply $O(t) = (1+\mu_3^2t)^{10}\,\frac{e^{t (\mu_6-15\mu_4-10\mu_3^2 + 30)}}{\left(1+3t-\mu_4t\right)^{15}} N(t')$ with $t'=t/(1-(\mu_4-3)t)^3$.

\subsection{Examples and validations}
Our formula enables us to evaluate the moments for certain special distribution unamenable with the previously solved special cases $m_1 = 0$ (in \cite{BLP23}) and $\mu_3 = 0$ (in \cite{BeckPhD}). 

\subsubsection{General case}
First, let us note that we tested our formula by independently calculating $f_6(n)$ up to $n = 7$ for general $m_1,m_2,\ldots,m_6$). Let us briefly discuss how those values were obtained.

The polynomial $f_6(n)$ corresponding to the sixth moment was computed by exploiting the factorization $(\det A)^6=(\det A)^4(\det A)^2$,
which avoids the direct enumeration of all ordered 6-tuples of permutations.

In the first stage, all equivalence classes of ordered 4-tuples of permutations were generated up to simultaneous conjugation by the symmetric group. The search was performed incrementally as a permutation branching tree. At every level, branches were canonically represented using conjugation, equivalent branches were merged, and stabilizer sizes were propagated to determine the correct multiplicities. This reduced the search space by several orders of magnitude. The resulting fourth-layer representatives, together with their signed multiplicities, were exported for subsequent processing.

Independently, all terms occurring in $(\det A)^2$ were generated once and stored as pairs of permutation indices with their corresponding signs. Both files containing (symmetrized) $(\det A)^4$ and $(\det A)^2$ were roughly the same size in order of 100 GB.

The final polynomial was obtained by combining every representative from the fourth layer with every determinant-squared pair. For each resulting 6-tuple of permutations, the corresponding monomial in the multiplicity variables $(m_1,\ldots,m_6)$ was computed from the column occupancy counts, and its coefficient was accumulated.

Since the fourth layer for ($n=7$) contains millions of representatives, the computation was divided into independent batches. A preliminary benchmarking stage determined the largest batch size that could reliably finish within the 12-hour wall-time limit of the computing cluster. Each batch was then evaluated independently using all CPU cores allocated by Slurm, producing a partial polynomial. A Slurm job array was used to execute all batches concurrently. Finally, all partial polynomials were merged by summing coefficients of identical monomials, yielding the complete polynomial for the sixth moment.

The implementation used Numba to JIT-compile the monomial construction routine, multiprocessing for shared-memory parallelism within each batch, and Slurm job arrays for distributed execution on the Sn\v{e}hurka cluster (former cluster of Chrales University, Prague). This combination enabled the computation of the ($n=7$) case. The computation was performed as 171 independent Slurm array jobs using 72 CPU cores each. The total computational effort corresponds to approximately $1.09\times 10^9$ CPU-hours (12.4 CPU-years) on the cluster.

\subsubsection{Exponential distribution}
Let $\Omega = \mathsf{Exp}(1)$, that is $m_k = k!$ or $(m_1,\mu_2,\mu_3,\mu_4,\mu_5,\mu_6) = (1,1,2,9,44,265)$, then we have the following matrix determinant moments (Table \ref{tab:distrExp}):
\begin{table}[H]
    \centering
    \begin{tabular}{c|l}
\hline
$n$ & $f_6(n)$\\
\hline
 0 & 1 \\
 1 & 720 \\
 2 & 907200 \\
 3 & 1559900160 \\
 4 & 3340718899200 \\
 5 & 8515130572800000 \\
 6 & 25161471058916966400 \\
 7 & 84778820397427064832000 \\
 8 & 322187011630166233055232000 \\
 9 & 1370965636084544778912399360000 \\
 10 & 6500579834950673083080376320000000 \\
 11 & 34243885869525179666648684707184640000 \\
 12 & 200085271301335275167268690894363033600000 \\
 13 & 1295983978018787557825779569628868824268800000 \\
 14 & 9307332162977084442481734803080957717905408000000 \\
 15 & 74166629026207481358952479221797600584794112000000000 \\
 16 & 656429305664736449200916262839339144268500460109824000000 \\
 17 & 6459825500870621719022312200945319525232978210639052800000000 \\
 \hline
    \end{tabular}
    \caption{Sixth determinant moments for the exponential distribution of entries}
    \label{tab:distrExp}
\end{table}

\subsubsection{Shifted Scaled Bernoulli Matrices}
Let $\Omega = \{a,b\}$, so that $X_{ij} = b$ with probability $p$ and $X_{ij} = a$ with probability $1-p$. This distribution family naturally generalizes the $\{0,1\}$ matrices. Alternatively, $\Omega = a + (b-a) \mathsf{Bern}(p)$, where $\mathsf{Bern}(p)$ is the Bernoulli distribution. We can also equivalently characterize the distribution of $\Omega$ by its moments $m_k = a^k (1-p) + b^k p$. Without loss of generality, since we can put $\mu_2 = 1$, we can assume
\begin{equation}
p = \frac{1}{2}-\frac{v}{2 \sqrt{v^2+4}},\quad a = u + \frac{1}{2} \left(v-\sqrt{v^2+4}\right),\quad b = u +\frac{1}{2} \left(\sqrt{v^2+4}+v\right),
\end{equation}
so we have $(m_1,\mu_2,\mu_3,\mu_4,\mu_5,\mu_6) = (u,1,v,1+v^2,2v+v^3,1+3v^2+v^4)$. Our formula predicts
\begin{align*}
f_6(1) &= \left(v^4+3 v^2+1\right)+6 u v \left(v^2+2\right)+15 u^2 \left(v^2+1\right)+20 u^3 v+15 u^4+u^6,\\
f_6(2) &= 2 \left(v^2+4\right) \left(v^6+2 v^4+8 v^2+4\right)+24 u v \left(v^2+4\right) \left(v^4+v^2+3\right)\\
& +60 u^2 \left(v^2+4\right) \left(2v^4+v^2+2\right)+80 u^3 \left(v^2+4\right) \left(4 v^3+v\right)+120 u^4 \left(v^2+4\right) \left(4 v^2+1\right)\\
&+360 u^5 v \left(v^2+4\right)+4 u^6 \left(v^2+4\right) \left(v^2+34\right),\\
f_6(3) & = 6 \left(v^2+4\right)^2 \left(v^8+v^6+21 v^4+46 v^2+16\right)+108 u v \left(v^2+4\right)^2 \left(v^6+12 v^2+12\right)\\
&+270 u^2 \left(v^2+4\right)^2 \left(3 v^6-v^4+20 v^2+8\right)+360 u^3 v \left(v^2+4\right)^2 \left(9 v^4-2 v^2+28\right)\\
&+270 u^4 \left(v^2+4\right)^2 \left(27 v^4+6 v^2+32\right)+2160 u^5 v \left(v^2+4\right)^2 \left(4 v^2+3\right)\\
&+18 u^6 \left(v^2+4\right)^2 \left(v^4+248 v^2+376\right),\\
f_6(4) & = 24 \left(v^2+4\right) \left(v^{14}+8 v^{12}+56 v^{10}+480 v^8+1920 v^6+5984 v^4+5792 v^2+2944\right)\\
&+576 u v \left(v^2+4\right) \left(v^{12}+7 v^{10}+35 v^8+270 v^6+880 v^4+2024 v^2+1088\right)\\
&+1440 u^2 \left(v^2+4\right) \left(4 v^{12}+27 v^{10}+96 v^8+632 v^6+1800 v^4+2832 v^2+1024\right)\\
&+1920 u^3 v \left(v^2+4\right) \left(16 v^{10}+113 v^8+316 v^6+1492 v^4+3812 v^2+3376\right)\\
&+5760 u^4 \left(v^2+4\right) \left(16 v^{10}+125 v^8+340 v^6+994 v^4+2198 v^2+1192\right)\\
&+8640 u^5 v \left(v^2+4\right) \left(17 v^8+150 v^6+456 v^4+896 v^2+1248\right)\\
&+96 u^6 \left(v^2+4\right) \left(v^{10}+1040 v^8+10480 v^6+39040 v^4+78560 v^2+86464\right)
\end{align*}
and so on. Let us briefly explain what is needed to recover $f_6(n)$ for the shifted scaled Bernoulli matrices independent from our result. Let $B_n$ be the set of all $\{0,1\}$ matrices of size $n \times n$ and let
\begin{equation}
    c_n(\alpha,\beta,\gamma) = \left\{B = (B_{ij})_{n\times n} \in B_n \,\bigg{|}\, \sum_{ij} B_{ij} = \alpha,\, \det B = \beta,\, \det(B+J) = \gamma\right\},
\end{equation}
where $J$ is the matrix of ones. Note that $\alpha$ counts the number of ones in $B$. We can write any matrix $A$ whose entries follow the shifted scaled Bernoulli distribution as $A = a J + (b-a) B$ for some (random) $B \in B_n$. By the Matrix Determinant Lemma, we know that $\det A$ must be a linear function of $\det B$ and this linearity forces
\begin{equation}
\det A = (b-a)^{n-1}((b-2a)\det B + a\det (B+J)).
\end{equation}
Assuming $B$ has $\mathsf{Bern}(p)$ entries, the probability of selecting a given matrix $B$ equals $p^\alpha(1-p)^{n^2 - \alpha}$ and thus for any $k$,
\begin{equation}\label{Eq:SSB}
f_k(n) = \Exx (\det A)^k = (b-a)^{k(n-1)} \sum_{\alpha,\beta,\gamma} c_n(\alpha,\beta,\gamma) p^\alpha(1-p)^{n^2 - \alpha} ((b-2a) \beta + a \gamma)^k,
\end{equation}
where the sum is taken over all possible values of $\alpha,\beta,\gamma$ for $B_n$ matrices. By only knowing $c_n(\alpha,\beta,\gamma)$, we recover $f_k(n)$ for the distribution class of shifted scaled Bernoulli matrices. We checked our formula for $f_6(n)$ using the above formula by obtaining $c_n(\alpha,\beta,\gamma)$ up to $n = 9$ by running Nauty and Traces \cite{McKay2014} on the new HPC cluster of Charles University, Prague. The case $n = 9$ required approximately $1.93\times 10^{15}$ arithmetic operations, $16{,}384$ parallel CPU-hours of wall time, and produced $\approx 96$\,GB of intermediate data.  The case $n = 10$ is estimated to require at least $70$ days of computation on $10{,}000$ cores together with $\approx 236$\,TB of storage, and was not attempted. The core idea is to enumerate $n\times n$ matrices \emph{implicitly}
via their canonical representatives under the action of $S_n \times S_n$
(simultaneous row and column permutations). Their counts (sequence \href{https://oeis.org/A002724}{A002724} in the OEIS) grow rapidly: $317$ for $n=5$, $251{,}610$ for $n=7$, $33{,}642{,}660$ for $n=8$, and $14{,}685{,}630{,}688$ for $n=9$. We can only store representatives of $n = 8$. Then, for each representative, we generate all possible $9\times 9$ matrices on the fly by \emph{bordering}: extending each $8\times 8$
canonical form by one new row and one new column -- the method introduced by \v{Z}ivkovi\'{c} \cite{Zivkovic2006}. This gives $2^{17} = 131{,}072$ candidate extensions per base form. Determinants of all extensions are then accumulated using the Bareiss algorithm
implemented in Numba-JIT-compiled Python, achieving a sustained throughput of approximately $3{,}775\,\mu$s per base form. This turned out to be faster than directly using \texttt{genbg} from Nauty on $9 \times 9$ matrices. 

\subsubsection{Bernoulli and Uniform Binary matrices}
Finally, let us mention a notable special case of Bernoulli matrices (which is not equivalent to the general case of shifted scaled Bernoulli matrices). Let $C_n(\alpha,\beta) = \sum_\gamma c_n(\alpha,\beta,\gamma)$ be the number of $\{0,1\}$ matrices of size $n \times n$ with $\alpha$ of ones and with $\det B = \beta$. Then for $\Omega = \mathsf{Bern}(p)$, we have by Equation \eqref{Eq:SSB} with $a=0$ and $b = 1$,
\begin{equation}
f_k(n) = \sum_{\alpha,\beta} C_n(\alpha,\beta) p^\alpha(1-p)^{n^2 - \alpha} \beta^k.
\end{equation}
The case above is still uncoverable from the known $F_6(t)|_{\mu_3 = 0}$ (see \cite{BeckPhD}) since we have $m_1 = p$ and $\mu_3 = p (1-p) (1-2 p)$. However, when, $p = 1/2$ (that is, matrices $B$ are selected uniformly), we do indeed have $\mu_3 = 0$. Let $T_n(\alpha) = \sum_\beta C(\alpha,\beta)$ be the number of $\{0,1\}$ matrices of size $n \times n$ with $\det B = \beta$, then $f_k(n) = (1/2^{n^2}) \sum_\alpha T_n(\alpha) \beta^k$, where the sum runs over all possible values of $\det B$. Note that the values $T_n(\alpha)$ are listed by OEIS sequence \href{https://oeis.org/A089478}{A089478} (only up to $n=9$). Note that, as pointed by Tao \cite{TerryTao}, there is an equivalence between $\{0,1\}$ and $\{\pm 1\}$ matrices with $p=1/2$, so essentially the above generating function can be deduced even from the special case of $F_6(t)$ with $m_1 = 0$ (see \cite{BLP23}).

\subsection{Asymptotics}
The knowledge of the full generating function enables us to deduce certain interesting asymptotics by the method of analysing factorially divergent series due to Borinsky \cite{Borinsky2018}. For any positive integer $r$, we have the asymptotic expansion
\begin{equation}
    \frac{f_6(n)}{n!^2} = \left(\sum _{j=0}^{r-1} c_j\,(n+9-j)!\right) + O((n+9-r)!),
\end{equation}
where the coefficients $c_j$ are given by applying the $\mathcal{A}^1_{10}$ operator on $F_6(t)$. Formally,
\begin{equation}
\mathcal{A}^1_{10}[F_6](t) = \sum_{j=0}^\infty c_j t^j.
\end{equation}
More concretely, let $g(t) = t/\left(1+(3-\mu_4)t\right)^3$, then $\mathcal{A}^1_{10}[F_6](t)$ equals $F_6(t)$ in which we replace $N(g(t))$ with
\begin{equation}
    \frac{t^3}{48}\left(\frac{t}{g(t)}\right)^7 \exp\left(\frac{1}{t}-\frac{1}{g(t)}\right) (1-8 g(t)+12 g(t)^2)
\end{equation}
and $N'(g(t))$ by
\begin{equation}
    \frac{t^2}{48}\left(\frac{t}{g(t)}\right)^8 \exp\left(\frac{1}{t}-\frac{1}{g(t)}\right) (1-15g(t)+60 g(t)^2-60 g(t)^3).
\end{equation}
For any $\Omega$ with $\mu_2 = 1$, we get by expanding $\mathcal{A}^1_{10}[F_6](t)$,
\begin{equation}
f_6(n) = \frac{e^{3\mu_4 - 9}}{48} (n!)^3 (m_1^6 n^9+m_1^4 \left(3+12 m_1 \mu _3+m_1^2 \left(46-9 \mu _4-3 \mu _4^2+\mu _6\right)\right) n^8 + O(n^7))
\end{equation}

\subsubsection{Exponential distribution}
A more precise asymptotic relation can be obtained by expanding $\mathcal{A}^1_{10}[F_6](t)$ into more terms. For example, when $\Omega = \mathsf{Exp}(1)$, we would get
\begin{equation}
\begin{split}
&f_6(n) = \frac{e^{18}}{48} (n!)^3 \left(n^9+14 n^8+75 n^7-2352 n^6-48429 n^5-\frac{2585282}{5} n^4\right.\\
&\left.-\frac{15625943}{5}n^3+\frac{46592436}{7}n^2+\frac{2370767804}{5}n+\frac{318577704432}{35} + O(1/n)\right).
\end{split}
\end{equation}

\subsubsection{Bernoulli matrices}
Let $\Omega = \mathsf{Bern}(p)$, then
\begin{equation}
\begin{split}
f_6(n) & = \frac{(n!)^3}{48} e^{3 \left(-3+p-4 p^2+6 p^3-3 p^4\right)} \big{(}n^9 p^6+n^8 p^4 \left(-27 p^{10}+108 p^9-185 p^8+177 p^7\right.\\
&\left.-77 p^6-15 p^5+51 p^4-44 p^3+58 p^2+3\right) + \tfrac12 n^7 p^2\left(729 p^{20}-5832 p^{19}\right.\\
&\left.+21654 p^{18}-49518 p^{17}+76401 p^{16}-80028 p^{15}+50769 p^{14}-4806 p^{13}\right.\\
&\left.-31241 p^{12}+43748 p^{11}-35543 p^{10}+17960 p^9-1565 p^8-6262 p^7\right.\\
&\left.+7114 p^6-4846 p^5+3157 p^4-180 p^3+312 p^2+6\right) + O(n^6)\big{)}.
\end{split}
\end{equation}

\subsection{Permutation tables decomposition}
Let us set the stage for the proof of the main result. For that, we need to recall some constructions on permutation tables. Let $S_n$ be the set of all permutations on $[n] = \{1,2,3,\ldots,n\}$, then by definition of determinant,
\begin{equation}
\det A = \sum_{\pi \in S_n} \sgn \pi \prod_{i=1}^n X_{i \pi(i)}.
\end{equation}
Taking the $6$-th power and applying the expectation operator, we get
\begin{equation}
f_6(n) = \sum_{T \in T_{6,n}} w(T) \sgn T,
\end{equation}
where $T_{6,n} := S_n\times S_n\times S_n\times S_n\times S_n\times S_n$ is the set of all \emph{permutation tables} with $6$ rows and $n$ columns and where
\begin{equation}
\sgn T = \sgn \pi_1 \sgn \pi_2 \cdots \sgn \pi_6
\end{equation}
is the \emph{sign} and
\begin{equation}
w(T) = \Exx \prod_{j=1}^6 \prod_{i=1}^n X_{i \pi_j(i)}
\end{equation}
is the \emph{weight} of a given permutation table $T = \pi_1 \times \pi_2 \times \pi_3 \times \pi_4 \times \pi_5 \times \pi_6 \in T_{6,n}$. Due to the independence, the weight $w(T)$ can we written as a product of weights $w(\mathbf{c})$ of individual columns $\mathbf{c}$ of $T$. An example of a table $T$ is shown below in Figure \ref{fig:exatable6x11} together with its column weights. Its seventh column corresponds to the term $X_{79}^2 X_{78}^4$ in the expansion of $(\det A)^6$ and the weight of this column is 
$w(\mathbf{c}) = \Exx X_{79}^2 X_{78}^4 = m_2 m_4$.

\begin{figure}[H]
\centering
    \begin{tabular}{|*{11}{>{\columncolor{mycolor}[\tabcolsep][\tabcolsep]\centering}p{2.6em}|}>{\centering\arraybackslash}p{0.6em}}
    \hhline{|-----------|~}
    \ru{0.9}8 & 5 & 1 & 10 & 4 & 7 & 9 & 11 & 3 & 2 & 6 & $-$ \\
        10 & 9 & 11 & 7 & 2 & 5 & 8 & 1 & 4 & 3 & 6 & $+$ \\
        10 & 9 & 11 & 7 & 3 & 5 & 8 & 4 & 2 & 1 & 6 & $-$ \\
        10 & 9 & 11 & 7 & 1 & 5 & 8 & 2 & 4 & 3 & 6 & $-$ \\
        10 & 5 & 9 & 7 & 11 & 4 & 8 & 3 & 2 & 1 & 6 & $-$ \\
        10 & 5 & 2 & 8 & 7 & 4 & 9 & 1 & 3 & 11 & 6 & $-$ \\
        \hhline{|-----------|~}
        \multicolumn{1}{c}{\ru{1.2}\makebox[0pt]{$m_1m_5$}} &
        \multicolumn{1}{c}{\makebox[0pt]{$m_3^{2}$}} &
        \multicolumn{1}{c}{\makebox[0pt]{$m_1^{3}m_3$}} &
        \multicolumn{1}{c}{\makebox[0pt]{$m_1^{2}m_4$}} &
        \multicolumn{1}{c}{\makebox[0pt]{$m_1^{6}$}} &
        \multicolumn{1}{c}{\makebox[0pt]{$m_1m_2m_3$}} &
        \multicolumn{1}{c}{\makebox[0pt]{$m_2m_4$}} &
        \multicolumn{1}{c}{\makebox[0pt]{$m_1^{4}m_2$}} &
        \multicolumn{1}{c}{\makebox[0pt]{$m_2^{3}$}} &
        \multicolumn{1}{c}{\makebox[0pt]{$m_1^{2}m_2^{2}$}} &
        \multicolumn{1}{c}{\makebox[0pt]{$m_6$}} &
        \multicolumn{1}{c}{}
    \end{tabular}
\caption{A permutation table $T \in T_{6,11}$ with $w(T)=m_1^{19}m_2^{8}m_3^{4}m_4^{2}m_5m_6$ and $\sgn T=-1$.}
\label{fig:exatable6x11}
\end{figure}

\subsubsection{Marked permutation tables}
It turns out that we can drastically reduce the number of tables by centering the random entries $X_{ij}$ in $A$. Below we state the basic results. For more details, see \cite{BeckPhD} and \cite{BLP25}. Let $Y_{ij} := X_{ij} - m_1$ and let $\mu_q = \expe{Y_{11}^q}$ be their moments (central moments of $\Omega$). Let $S_n^\times$ be the set of \emph{marked permutations} where at most one element of a permutation on $n$ elements is decorated by a mark ($\times$). Then, by the Matrix Determinant Lemma,
\begin{equation}
\det A = \sum_{\pi \in S_n^\times} \sgn \pi \prod_{i = 1}^n Y_{i \pi(i)}^\times,
\end{equation}
where $Y_{i \pi(i)}^\times = m_1$ if the element $i$ is marked and $Y_{i \pi(i)}^\times = Y_{i \pi(i)}$ otherwise. Again by rasing to the $6$ power, expanding and applying the expectation operator, we get
\begin{equation}
f_6(n) = \sum_{T \in T_{6,n}^\times} w(T) \sgn T,
\end{equation}
where $T_{6,n}^\times := S_n^\times\times S_n^\times\times S_n^\times\times S_n^\times\times S_n^\times\times S_n^\times$ is the set of all \emph{marked permutation tables} with $6$ rows and $n$ columns and where the sign $\sgn T$ corresponds to the sign of the underlying un-marked permutation table (equals the product of signs of the individual permutations) and where
\begin{equation}
w(T) = \Exx \prod_{j=1}^6 \prod_{i=1}^n Y_{i \pi_j(i)}
\end{equation}
is again the \emph{weight} of a given marked permutation table $T \in T_{6,n}^\times$. We also denote $T_6^\times = \bigcup_{n \in \mathbb{N}} T_{6,n}$ as the set of all marked permutation tables with $6$ rows irrespective of the number of columns. The weight $w(T)$ can we written as a product of weights $w(\mathbf{c})$ of individual columns $\mathbf{c}$ of $T$. Note that the column weight $w(\mathbf{c})$ is independent on what element is being covered by the mark $\times$. However, even if not shown next to a mark, this marked element is always uniquely deducible from other elements in the same row since each row contains at most one mark and uses up all elements from $[n]$ (no element repeats in a row of $T$). The types of all columns $\mathbf{c}$ with non-zero weight which can appear in a table $T$ (up to permutation of rows) are shown below (including their weights $w(\mathbf{c})$). Elements $a,b,c,\ldots$ are placeholders representing \emph{different} numbers (that is, $a\neq b, a\neq c, b\neq c,$ etc.). The column types have names (shown in the first row)
which we will use often in our paper. The notation $\times^r_s$ means a column with $r$ marks and, in case of ambiguity, with $s$ the size of the largest unmarked tuple. 
\vspace{-1em}
\bgroup
\renewcommand{\arraystretch}{0.8}
\begin{table}[H]
\centering
\setlength{\tabcolsep}{5.2pt}
\begin{GrayBox}
\vspace{-0.5em}
\centering
\begin{tabular}{cccccccccccc}
    $\mathbf{c}$: & $6$ & $4$ & $3$ & $2$ & $\times_5^1$ & $\times_3^1$ & $\times_4^2$ & $\times_2^2$ & $\times^3$ & $\times^4$ & $\times^6$ \\[0.3em]
    $T_{6,n}^\times:$& \begin{tabular}{|>{\columncolor{mycolor}}c|}
    \hline
        $a$\\
        $a$\\
        $a$\\
        $a$\\
        $a$\\
        $a$\\
    \hline
    \end{tabular}
    &
        \begin{tabular}{|>{\columncolor{mycolor}}c|}
    \hline
        $a$\\
        $a$\\
        $a$\\
        $a$\\
        $b$\\
        $b$\\
    \hline
    \end{tabular}
    &
        \begin{tabular}{|>{\columncolor{mycolor}}c|}
    \hline
        $a$\\
        $a$\\
        $a$\\
        $b$\\
        $b$\\
        $b$\\
    \hline
    \end{tabular}
    &
        \begin{tabular}{|>{\columncolor{mycolor}}c|}
    \hline
        $a$\\
        $a$\\
        $b$\\
        $b$\\
        $c$\\
        $c$\\
    \hline
    \end{tabular}
    &
        \begin{tabular}{|>{\columncolor{mycolor}}c|}
    \hline
        $\!\times\!$\\
        $a$\\
        $a$\\
        $a$\\
        $a$\\
        $a$\\
    \hline
    \end{tabular}
    &
        \begin{tabular}{|>{\columncolor{mycolor}}c|}
    \hline
        $\!\times\!$\\
        $a$\\
        $a$\\
        $a$\\
        $b$\\
        $b$\\
    \hline
    \end{tabular}
    &
        \begin{tabular}{|>{\columncolor{mycolor}}c|}
    \hline
        $\!\times\!$\\
        $\!\times\!$\\
        $a$\\
        $a$\\
        $a$\\
        $a$\\
    \hline
    \end{tabular}
    &
        \begin{tabular}{|>{\columncolor{mycolor}}c|}
    \hline
        $\!\times\!$\\
        $\!\times\!$\\
        $a$\\
        $a$\\
        $b$\\
        $b$\\
    \hline
    \end{tabular}
    &
        \begin{tabular}{|>{\columncolor{mycolor}}c|}
    \hline
        $\!\times\!$\\
        $\!\times\!$\\
        $\!\times\!$\\
        $a$\\
        $a$\\
        $a$\\
    \hline
    \end{tabular}
    &
        \begin{tabular}{|>{\columncolor{mycolor}}c|}
    \hline
        $\!\times\!$\\
        $\!\times\!$\\
        $\!\times\!$\\
        $\!\times\!$\\
        $a$\\
        $a$\\
    \hline
    \end{tabular}
    &
        \begin{tabular}{|>{\columncolor{mycolor}}c|}
    \hline
        $\!\times\!$\\
        $\!\times\!$\\
        $\!\times\!$\\
        $\!\times\!$\\
        $\!\times\!$\\
        $\!\times\!$\\
    \hline
    \end{tabular}
    \\[2.6em]
    $w(\mathbf{c})$: & $\mu_6$ & $\mu_4$ & $\mu_3^2$ & $1$ & $m_1\mu_5$ & $m_1\mu_3$ & $m_1^2\mu_4$ & $m_1^2$ & $m_1^3\mu_3$ & $m_1^4$ & $m_1^6$
\end{tabular}
\vspace{-0.6em}
\end{GrayBox}
\end{table}
\egroup
\vspace{-1em}
\noindent
An example of a marked permutation table $T$ is shown below in Figure \ref{fig:markedtable}. Its sixth column corresponds to the term $m_1 Y_{68}^2 Y_{62}^3$ in the expansion of $(\det A)^6$ 
and its weight is $w(\mathbf{c}) = \Exx m_1 Y_{68}^2 Y_{62}^3 = m_1 \mu_3$ (recall we assume $\mu_2 = 1$).

\begin{figure}[H]
\centering
    \begin{tabular}{|*{8}{>{\columncolor{mycolor}[\tabcolsep][\tabcolsep]\centering}p{3.1em}|}>{\centering\arraybackslash}p{1.5em}}
    \hhline{|--------|~}
    \ru{0.9}5 & 3 & 1 & 2 & $\!\!\!\!\!\times{4}$ & 8 & 6 & 7 & $+$ \\
        5 & 3 & 1 & 8 & 4 & $\!\!\!\!\!\times{2}$ & 6 & 7 & $-$ \\
        5 & 3 & 6 & 8 & 4 & 2 & 7 & $\!\!\!\!\!\times{1}$ & $-$ \\
        5 & 3 & 6 & 2 & 4 & 8 & 7 & $\!\!\!\!\!\times{1}$ & $+$ \\
        5 & 8 & 1 & 3 & 4 & 2 & 6 & 7 & $+$ \\
        5 & 8 & 6 & 3 & 4 & 2 & $\!\!\!\!\!\times{1}$ & 7 & $-$ \\
        \hhline{|--------|~}
        \multicolumn{1}{c}{\ru{1.2}\makebox[0pt]{$\mu_6$}} &
        \multicolumn{1}{c}{\makebox[0pt]{$\mu_4$}} &
        \multicolumn{1}{c}{\makebox[0pt]{$\mu_3^2$}} &
        \multicolumn{1}{c}{\makebox[0pt]{$1$}} &
        \multicolumn{1}{c}{\makebox[0pt]{$m_1\mu_5$}} &
        \multicolumn{1}{c}{\makebox[0pt]{$m_1\mu_3$}} &
        \multicolumn{1}{c}{\makebox[0pt]{$m_1\mu_3$}} &
        \multicolumn{1}{c}{\makebox[0pt]{$m_1^2\mu_4$}} &
        \multicolumn{1}{c}{}
    \end{tabular}
\caption{A permutation table $T\in T_{6,8}^\times$ with columns $6, 4, 3, 2, \times_5^1, \times_3^1, \times_3^1, \times_4^2$, carrying $5$ marks in distinct rows, $w(T)=m_1^5\mu_3^4\mu_5\mu_4^2\mu_6$, $\sgn\tau=-1$.}
\label{fig:markedtable}
\end{figure}

\subsubsection{Paired/Blocked tables}
From now on, we assume $\mu_ 2 = 1$. To utilize the correspondence with the known machinery for the Gaussian case (in \cite{BLP23}) for which $\mu_6 = 15, \mu_5 = 0, \mu_4 = 3$ and $\mu_3 = 0$, we introduce another set of tables.

\begin{definition}
Let the \emph{paired/blocked tables} $\underline{T}_{6,n}^\times$ be the marked permutation tables $T_{6,n}^\times$ where each time an element appears unmarked $4$ or more times in a column, we either have all of the unmarked copies of this element in this column be a block or we partition these copies into pairs (or a pair and a triple if the column has $5$ unmarked copies of the element), 
The weights of these columns are 
carefully chosen via replacements (inspired by the inclusion/exclusion in \cite{BLP23}) such that in the end we have
\begin{equation}\label{eq:InclExcl}
    f_6(n) = \sum_{T \in T_{6,n}^\times} w(T) \sgn T = \sum_{T \in \bunderline{T}_{6,n}^\times} w(T) \sgn T.
\end{equation}
Columns of $\bunderline{T}_6^\times = \bigcup_{n \in \mathbb{N}} \bunderline{T}_{6,n}^\times$ with their weights are shown below. To distiguish them from columns of tables in $T_6^\times$, we use an underline, so a $\underline{6}-$column is a column of a table in $\bunderline{T}_6^{\times}$ 
If a column with no marks has a block of size $4$ or more, we say it is a \emph{known} column. If a column has four or more copies of an element but has no blocks or marks, we say it is an \emph{unknown} column. Note that unknown columns must be $\underline{2}$ columns where some of the elements $a,b,c$ are equal to each other.

\vspace{-1em}
\bgroup
\renewcommand{\arraystretch}{0.8}
\begin{table}[H]
\centering
\setlength{\tabcolsep}{5.2pt}
\begin{GrayBox}
\vspace{-0.5em}
\centering
\begin{tabular}{cccccccccccc}
    $\mathbf{c}$: & $\underline{6}$ & $\underline{4}$ & $\underline{3}$ & $\underline{2}$ & ${\underline{\!\times\!}\,}_5^1$ & ${\underline{\!\times\!}\,}_3^1$ & ${\underline{\!\times\!}\,}_4^2$ & ${\underline{\!\times\!}\,}_2^2$ & ${\underline{\!\times\!}\,}^3$ & ${\underline{\!\times\!}\,}^4$ & ${\underline{\!\times\!}\,}^6$ \\[0.0em]
    $\!\!\bunderline{T}_{6,n}^\times:\!\!$& \begin{tikzpicture}[baseline = 8.2ex,scale = 0.42]
        \draw[fill=mycolor] (1.2-.6, 0.5) rectangle (1 *1.2+.6, 6.5);
        \pairA[1.2];
        \draw[fill=white] (1*1.2-0.3, 0.8) rectangle (1*1.2+0.3, 6.2);
        \node[vertex] (a) at (1*1.2,3.5) {$a$};
        \end{tikzpicture}
    &
        \begin{tikzpicture}[baseline = 8.2ex,scale = 0.42]
        \draw[fill=mycolor] (1.2-.6, 0.5) rectangle (1 *1.2+.6, 6.5);
        \pairA[1.2];
        \draw[fill=white] (1*1.2-0.3, 2.6) rectangle (1*1.2+0.3, 6.2);
        \node[vertex] (a) at (1*1.2,4.5) {$a$};
        \draw[pair] (A1.center) to node[fill=mycolor, inner sep=1.0pt] {$b$} (A2.center);
        \end{tikzpicture}

    &
        \begin{tikzpicture}[baseline = 8.2ex,scale = 0.42]
        \draw[fill=mycolor] (1.3-0.6, 0.5) rectangle (1*1.3+0.6, 6.5);
        \pairA[1.3];
        \draw (A1.center) -- (A3.center);
        \draw (A4.center) -- (A6.center);
        \foreach \i in {1,...,6}
            \draw[fill=white, draw=black] (A\i) circle (6pt);
        \node[vertex] (a) at (1.0,4.5) {$a$};
        \node[vertex] (a) at (1.0,2.5) {$b$};
        \end{tikzpicture}
    &
        \begin{tikzpicture}[baseline = 8.2ex,scale = 0.42]
        \draw[fill=mycolor] (1.3-0.6, 0.5) rectangle (1*1.3+0.6, 6.5);
        \pairA[1.3];
        \draw[pair] (A1.center) to [bend right=45] node[inner sep=1.0pt] {\,\,\,\,\,$b$} (A5.center);
        \draw[pair] (A3.center) to [bend left=33] node[inner sep=1.0pt] {$a$\,\,\,\,\,\,} (A6.center);
        \draw[pair] (A2.center) to [bend left=45] node[inner sep=1.0pt] {$c$\,\,\,\,\,\,\,} (A4.center);
        \end{tikzpicture}
    &
        \begin{tikzpicture}[baseline = 8.2ex,scale = 0.42]
        \draw[fill=mycolor] (1.3-0.6, 0.5) rectangle (1*1.3+0.6, 6.5);
        \pairA[1.3];
        \node at (A6) {$\times$};
        \draw[fill=white] (1*1.2-0.25, 0.8) rectangle (1*1.2+0.45, 5.4);
        \node[vertex] (a) at (1*1.2+0.07,3.3) {$a$};
        \end{tikzpicture}
    &
        \begin{tikzpicture}[baseline = 8.2ex,scale = 0.42]
        \draw[fill=mycolor] (1.3-0.6, 0.5) rectangle (1*1.3+0.6, 6.5);
        \pairA[1.3];
        \node at (A6) {$\times$};
        \draw (A3.center) -- (A5.center);
        \foreach \i in {3,...,5}
            \draw[fill=white, draw=black] (A\i) circle (6pt);
        \node[vertex] (a) at (1.6,3.5) {$a$};
        \draw[pair] (A1.center) to node[fill=mycolor, inner sep=1.0pt] {$b$} (A2.center);
        \end{tikzpicture}
    &
        \begin{tikzpicture}[baseline = 8.2ex,scale = 0.42]
        \draw[fill=mycolor] (1.3-0.6, 0.5) rectangle (1*1.3+0.6, 6.5);
        \pairA[1.3];
        \node at (A6) {$\times$};
        \node at (A5) {$\times$};
        \draw[fill=white] (1*1.2-0.25, 0.8) rectangle (1*1.2+0.45, 4.4);
        \node[vertex] (a) at (1*1.2+0.07,2.5) {$a$};
        \end{tikzpicture}
    &
        \begin{tikzpicture}[baseline = 8.2ex,scale = 0.42]
        \draw[fill=mycolor] (1.3-0.6, 0.5) rectangle (1*1.3+0.6, 6.5);
        \pairA[1.3];
        \node at (A6) {$\times$};
        \node at (A5) {$\times$};
        \draw[pair] (A1.center) to [bend left=45] node[inner sep=1.0pt] {$b$\,\,\,\,\,\,\,} (A3.center);
        \draw[pair] (A2.center) to [bend left=-45] node[inner sep=1.0pt] {\,\,\,\,\,\,\,$a$} (A4.center);
        \end{tikzpicture}
    &
        \begin{tikzpicture}[baseline = 8.2ex,scale = 0.42]
        \draw[fill=mycolor] (1.3-0.6, 0.5) rectangle (1*1.3+0.6, 6.5);
        \pairA[1.3];
        \node at (A6) {$\times$};
        \node at (A5) {$\times$};
        \node at (A4) {$\times$};
        \draw (A1.center) -- (A3.center);
        \foreach \i in {1,...,3}
            \draw[fill=white, draw=black] (A\i) circle (6pt);
        \node[vertex] (a) at (1.6,2.5) {$a$};
        \end{tikzpicture}
    &
        \begin{tikzpicture}[baseline = 8.2ex,scale = 0.42]
        \draw[fill=mycolor] (1.3-0.6, 0.5) rectangle (1*1.3+0.6, 6.5);
        \pairA[1.3];
        \node at (A6) {$\times$};
        \node at (A5) {$\times$};
        \node at (A4) {$\times$};
        \node at (A3) {$\times$};
        \draw[pair] (A1.center) to node[fill=mycolor, inner sep=1.0pt] {$a$} (A2.center);
        \end{tikzpicture}
    &
        \begin{tikzpicture}[baseline = 8.2ex,scale = 0.42]
        \draw[fill=mycolor] (1.3-0.6, 0.5) rectangle (1*1.3+0.6, 6.5);
        \pairA[1.3];
        \node at (A6) {$\times$};
        \node at (A5) {$\times$};
        \node at (A4) {$\times$};
        \node at (A3) {$\times$};
        \node at (A2) {$\times$};
        \node at (A1) {$\times$};
        \end{tikzpicture}
    \\[2.6em]
    $\!\!w(\mathbf{c}):\!\!$ & $\!\!\mu_6\!-\!15\!\!$ & $\mu_4\!-\!3$ & $\mu_3^2$ & $1$ & $\!\!\!\!\!\!\!m_1(\mu_5\!-\!10\mu_3)\!\!\!$ & $m_1\mu_3$ & $\!\!m_1^2(\mu_4\!-\!3)\!\!\!\!$ & $m_1^2$ & $\!\!m_1^3\mu_3\!\!\!\!$ & $m_1^4$ & $m_1^6$
\end{tabular}
\vspace{-0.6em}
\end{GrayBox}
\end{table}
\egroup
\vspace{-1em}

\noindent
An example of a paired/blocked table is shown below in Figure \ref{fig:pairblockT}.
\begin{figure}[H]
\centering
\begin{tikzpicture}[baseline = 8.2ex,scale = 0.70]
        \pairABCDEFGHI[1.3];
        \draw (1.3-1, 0.35) rectangle (9*1.3+1, 6.65);
        \draw[fill=mycolor,draw=none] (1.3-1, 0.5) rectangle (9*1.3+1, 6.5);
        \draw[fill=white] (2*1.3-0.3, 0.8) rectangle (2*1.3+0.3, 6.2);
        \node[vertex] (3) at (2*1.3,3.5) {3};
        \draw[fill=white] (3*1.3-0.3, 0.8) rectangle (3*1.3+0.3, 5.4);
        \node[vertex] (1) at (3*1.3,3.2) {1};
        \draw[fill=white] (7*1.3-0.3, 2.6) rectangle (7*1.3+0.3, 6.2);
        \node[vertex] (9) at (7*1.3,4.5) {9};
        \draw[pair] (A1.center) to [bend left=25] node[fill=mycolor, inner sep=1.0pt] {4} (A6.center);
        \draw[pair] (A2.center) to node[fill=mycolor, inner sep=1.0pt] {4} (A3.center);
        \draw[pair] (A4.center) to node[fill=mycolor, inner sep=1.0pt] {4} (A5.center);
        \node at (C6) {$\times 1$\!\!};
        \draw (D1.center) -- (D3.center);
        \draw (D4.center) -- (D6.center);
        \foreach \i in {1,...,6}
            \draw[fill=white, draw=black] (D\i) circle (6pt);
        \node[vertex] (a) at ($(D5) + (+0.5,0)$) {2};
        \node[vertex] (b) at ($(D2) + (+0.5,0)$) {5};
        \draw[pair] (E1.center) to node[fill=mycolor, inner sep=1.0pt] {6} (E2.center);
        \draw[pair] (E3.center) to [bend left=-45] node[fill=mycolor, inner sep=1.0pt] {6} (E6.center);
        \draw[pair] (E4.center) to node[fill=mycolor, inner sep=1.0pt] {8} (E5.center);
        \draw[pair] (F1.center) to [bend left=45] node[fill=mycolor, inner sep=1.0pt] {7} (F4.center);
        \node at (F2) {$\times 7$\!\!};
        \node at (F3) {$\times 7$\!\!};
        \draw[pair] (F5.center) to node[fill=mycolor, inner sep=1.0pt] {7} (F6.center);
        \draw[pair] (G1.center) to node[fill=mycolor, inner sep=1.0pt] {8} (G2.center);
        \draw[pair] (H1.center) to node[fill=mycolor, inner sep=1.0pt] {9} (H2.center);
        \draw[pair] (H3.center) to [bend left=45] node[fill=mycolor, inner sep=1.0pt] {8} (H6.center);
        \draw[pair] (H4.center) to node[fill=mycolor, inner sep=1.0pt] {6} (H5.center);
        \draw (I1.center) -- (I3.center);
        \draw (I4.center) -- (I6.center);
        \foreach \i in {1,...,6}
            \draw[fill=white, draw=black] (I\i) circle (6pt);
        \node[vertex] (a) at ($(I5) + (+0.5,0)$) {5};
        \node[vertex] (b) at ($(I2) + (+0.5,0)$) {2};        
        \draw (1.3-1, 0.5) rectangle (9*1.3+1, 6.5);
        \node at ($(I6)+(1.7,0)$) {$+$};
        \node at ($(I5)+(1.7,0)$) {$-$};
        \node at ($(I4)+(1.7,0)$) {$-$};
        \node at ($(I3)+(1.7,0)$) {$-$};
        \node at ($(I2)+(1.7,0)$) {$+$};
        \node at ($(I1)+(1.7,0)$) {$+$};
        \end{tikzpicture}
\caption{A paired/blocked table $T \in \bunderline{T}_{6,9}^\times$ with $w(T) = m_1^3 \mu_3^4 (\mu_4-3) (\mu_5-10\mu_3)(\mu_6-15)$ and $\sgn T = -1$.}
\label{fig:pairblockT}
\end{figure}

\end{definition}

\noindent
Finally, we show the replacements rules for columns. For a given table $T \in T_{6,n}^\times$, we generate a set of tables $T' \in \bunderline{T}_{6,n}^\times$ in which each column $\mathbf{c}$ in $T$ was replaced by blocked/paired columns $\mathbf{c}'$. This replacement is not a bijection since for a given column, we may replace it by many possible blocked/paired columns, so the set of $T'$ tables may be large. However, Equation \eqref{eq:InclExcl} stays preserved if
\begin{itemize}
    \item for any $T' \in \underline{T}_6^\times$, there is a unique map from $T'$ to $T$ and
    \item sum of weights of all columns $\mathbf{c}'$ equals $w(\mathbf{c})$.
\end{itemize}
Both conditions above can be satisfied by replacement rules shown in Figures \ref{fig:exclinclF6col4} -- \ref{fig:exclinclF6colx42}. The remaining correspondences of columns not shown here are simply bijectively paired up, so for example $2$-columns maps to $\underline{2}$ by pairing up the identical elements (only one pairing exist among those columns), the same goes with three-columns, that is $3$-columns match $\underline{3}$-columns etc. Those columns do not have to be called as known/unknown.

\begin{minipage}{.5\textwidth}
\begin{figure}[H]
    \centering
\begin{tabular}{ccccccccc}
    $4$ & & $\underline{4}$ & & $\underline{2}$ &  & $\underline{2}$ & & $\underline{2}$\\
    \begin{tikzpicture}[baseline = 6.2ex,scale = 0.42]
        \draw[fill=mycolor] (1.2-.6, 0.5) rectangle (1 *1.2+.6, 6.5);
        \node[vertex] (A1) at (1*1.2,1) {$b$};
        \node[vertex] (A2) at (1*1.2,2) {$b$};
        \node[vertex] (A3) at (1*1.2,3) {$a$};
        \node[vertex] (A4) at (1*1.2,4) {$a$};
        \node[vertex] (A5) at (1*1.2,5) {$a$};
        \node[vertex] (A6) at (1*1.2,6) {$a$};
        \end{tikzpicture}
    & \!\!$=$\!\!&
        \begin{tikzpicture}[baseline = 6.2ex,scale = 0.42]
        \draw[fill=mycolor] (1.2-.6, 0.5) rectangle (1 *1.2+.6, 6.5);
        \pairA[1.2];
        \draw[fill=white] (1*1.2-0.3, 2.6) rectangle (1*1.2+0.3, 6.2);
        \node[vertex] (a) at (1*1.2,4.5) {$a$};
        \draw[pair] (A1.center) to node[fill=mycolor, inner sep=1.0pt] {$b$} (A2.center);
        \end{tikzpicture}
    & \!\!\!\!\!$+$ &
        \begin{tikzpicture}[baseline = 6.2ex,scale = 0.42]
        \draw[fill=mycolor] (1.2-.5, 0.5) rectangle (1 *1.2+.5, 6.5);
        \pairA[1.2];
        \draw[pair] (A1.center) to node[fill=mycolor, inner sep=1.0pt] {$b$} (A2.center);
        \draw[pair] (A3.center) to node[fill=mycolor, inner sep=1.0pt] {$a$} (A4.center);
        \draw[pair] (A5.center) to node[fill=mycolor, inner sep=1.0pt] {$a$} (A6.center);
        \end{tikzpicture}
    & \!\!$+$\!\! &
        \begin{tikzpicture}[baseline = 6.2ex,scale = 0.42]
        \draw[fill=mycolor] (1.2-.7, 0.5) rectangle (1 *1.2+.5, 6.5);
        \pairA[1.2];
        \draw[pair] (A1.center) to node[fill=mycolor, inner sep=1.0pt] {$b$} (A2.center);
        \draw[pair] (A4.center) to node[fill=mycolor, inner sep=1.0pt] {$a$} (A5.center);
        \draw[pair] (A3.center) to [bend left=40] node[fill=mycolor, inner sep=1.0pt] {$\!a\!$} (A6.center);
        \draw (1.2-.7, 0.5) rectangle (1 *1.2+.5, 6.5);
        \end{tikzpicture}
    & \!\!$+$\!\! &
        \begin{tikzpicture}[baseline = 6.2ex,scale = 0.42]
        \draw[fill=mycolor,draw=none] (1.2-.7, 0.5) rectangle (1 *1.2+.7, 6.5);
        \pairA[1.2];
        \draw[pair] (A1.center) to node[fill=mycolor, inner sep=1.0pt] {$b$} (A2.center);
        \draw[pair] (A3.center) to [bend right=60] node[fill=mycolor, inner sep=1.0pt] {$\!a\!$} (A5.center);
        \draw[pair] (A4.center) to [bend left=60] node[fill=mycolor, inner sep=1.0pt] {$\!a\!$} (A6.center);
        \draw (1.2-.7, 0.5) rectangle (1 *1.2+.7, 6.5);
        \end{tikzpicture}
    \\[2.5em]
    $\mu_4$ & & $\!\!\!\!\!\!\!\!\mu_4\!-\!3\!\!\!\!\!\!\!\!$ & & $1$ &  & $1$ & & $1$
\end{tabular}
    \caption{Replacement of $4$-columns}
    \label{fig:exclinclF6col4}
\end{figure}
\end{minipage}
\begin{minipage}{.48\textwidth}
\begin{figure}[H]
    \centering
\begin{tabular}{ccccccccc}
    $6$ & & $\underline{6}$ & & $\underline{2}$ &  & $\underline{2}$ & & $\underline{2}$\\
    \begin{tikzpicture}[baseline = 6.2ex,scale = 0.42]
        \draw[fill=mycolor] (1.2-.6, 0.5) rectangle (1 *1.2+.6, 6.5);
        \node[vertex] (A1) at (1*1.2,1) {$a$};
        \node[vertex] (A2) at (1*1.2,2) {$a$};
        \node[vertex] (A3) at (1*1.2,3) {$a$};
        \node[vertex] (A4) at (1*1.2,4) {$a$};
        \node[vertex] (A5) at (1*1.2,5) {$a$};
        \node[vertex] (A6) at (1*1.2,6) {$a$};
        \end{tikzpicture}
    & \!\!$=$\!\! &
        \begin{tikzpicture}[baseline = 6.2ex,scale = 0.42]
        \draw[fill=mycolor] (1.2-.6, 0.5) rectangle (1 *1.2+.6, 6.5);
        \pairA[1.2];
        \draw[fill=white] (1*1.2-0.3, 0.8) rectangle (1*1.2+0.3, 6.2);
        \node[vertex] (a) at (1*1.2,3.35) {$a$};
        \end{tikzpicture}
    & \!\!\!\!\!$+$\!\! &
        \begin{tikzpicture}[baseline = 6.2ex,scale = 0.42]
        \draw[fill=mycolor] (1.2-.5, 0.5) rectangle (1 *1.2+.5, 6.5);
        \pairA[1.2];
        \draw[pair] (A1.center) to node[fill=mycolor, inner sep=1.0pt] {$a$} (A2.center);
        \draw[pair] (A3.center) to node[fill=mycolor, inner sep=1.0pt] {$a$} (A4.center);
        \draw[pair] (A5.center) to node[fill=mycolor, inner sep=1.0pt] {$a$} (A6.center);
        \end{tikzpicture}
    & \!\!\!\!$+$\!\!\!\! &
        \begin{tikzpicture}[baseline = 6.2ex,scale = 0.42]
        \draw[fill=mycolor] (1.2-.7, 0.5) rectangle (1 *1.2+.5, 6.5);
        \pairA[1.2];
        \draw[pair] (A1.center) to node[fill=mycolor, inner sep=1.0pt] {$a$} (A2.center);
        \draw[pair] (A3.center) to [bend left=40] node[fill=mycolor, inner sep=1.0pt] {$\!a\!$} (A6.center);
        \draw[pair] (A4.center) to node[fill=mycolor, inner sep=1.0pt] {$\!a\!$} (A5.center);
        \draw (1.2-.7, 0.5) rectangle (1 *1.2+.5, 6.5);
        \end{tikzpicture}
    & \!\!$+\, \cdots\, +$\!\! &
        \begin{tikzpicture}[baseline = 6.2ex,scale = 0.42]
        \draw[fill=mycolor,draw=none] (1.2-.7, 0.5) rectangle (1 *1.2+.7, 6.5);
        \pairA[1.2];
        \draw[pair] (A1.center) to node[fill=mycolor, inner sep=1.0pt] {$a$} (A2.center);
        \draw[pair] (A3.center) to [bend right=56] node[fill=mycolor, inner sep=1.0pt] {$\!a\!$} (A5.center);
        \draw[pair] (A4.center) to [bend left=56] node[fill=mycolor, inner sep=1.0pt] {$\!a\!$} (A6.center);
        \draw (1.2-.7, 0.5) rectangle (1 *1.2+.7, 6.5);
        \end{tikzpicture}
    \\[2.5em]
    $\mu_6$ & & $\!\!\!\!\!\!\!\!\mu_6\!-\!15\!\!\!\!\!\!\!\!$ & & $1$ &  & $1$ & & $1$
\end{tabular}
    \caption{Replacement of $6$-columns}
    \label{fig:exclinclF6col6}
\end{figure}
\end{minipage}

\begin{minipage}{.48\textwidth}
\begin{figure}[H]
    \centering
\begin{tabular}{ccccccc}
    $\times_5^1$ & & ${\underline{\!\times\!}\,}_5^1$ & & ${\underline{\!\times\!}\,}_3^1$ & & ${\underline{\!\times\!}\,}_3^1$\\
    \begin{tikzpicture}[baseline = 6.2ex,scale = 0.42]
        \draw[fill=mycolor] (1.2-.6, 0.5) rectangle (1 *1.2+.6, 6.5);
        \node[vertex] (A1) at (1*1.2,1) {$a$};
        \node[vertex] (A2) at (1*1.2,2) {$a$};
        \node[vertex] (A3) at (1*1.2,3) {$a$};
        \node[vertex] (A4) at (1*1.2,4) {$a$};
        \node[vertex] (A5) at (1*1.2,5) {$a$};
        \node[vertex] (A6) at (1*1.2,6) {$\times$};
        \end{tikzpicture}
    & \!\!$=$\!\! &
        \begin{tikzpicture}[baseline = 6.2ex,scale = 0.42]
        \draw[fill=mycolor] (1.2-.6, 0.5) rectangle (1 *1.2+.6, 6.5);
        \pairA[1.2];
        \draw[fill=white] (1*1.2-0.3, 0.8) rectangle (1*1.2+0.3, 5.5);
        \node[vertex] (a) at (1*1.2,3.2) {$a$};
        \node[vertex] (A6) at (1*1.2,6) {$\times$};
        \end{tikzpicture}
    & \!\!\!\!\!\!\!\!\!\!$+$\!\! &
       \begin{tikzpicture}[baseline = 6.2ex,scale = 0.42]
        \draw[fill=mycolor] (1.3-0.6, 0.5) rectangle (1*1.3+0.6, 6.5);
        \pairA[1.3];
        \node at (A6) {$\times$};
        \draw (A3.center) -- (A5.center);
        \foreach \i in {3,...,5}
            \draw[fill=white, draw=black] (A\i) circle (6pt);
        \node[vertex] (a) at (1.6,3.5) {$a$};
        \draw[pair] (A1.center) to node[fill=mycolor, inner sep=1.0pt] {$a$} (A2.center);
        \end{tikzpicture}
    & \!\!\!\!\!$+ \,\cdots\, +$\!\!\!\!\! &
       \begin{tikzpicture}[baseline = 6.2ex,scale = 0.42]
        \draw[fill=mycolor] (1.3-0.6, 0.5) rectangle (1*1.3+0.6, 6.5);
        \pairA[1.3];
        \node at (A6) {$\times$};
        \draw (A1.center) -- (A3.center);
        \foreach \i in {1,...,3}
            \draw[fill=white, draw=black] (A\i) circle (6pt);
        \node[vertex] (a) at (1.6,1.5) {$a$};
        \draw[pair] (A4.center) to node[fill=mycolor, inner sep=1.0pt] {$a$} (A5.center);
        \end{tikzpicture}
    \\[2.5em]
    $m_1 \mu_5$ & & $\!\!\!\!\!\!\!\!m_1(\mu_5\!-\!10\mu_3)\!\!\!\!\!\!$ & & $\!\!\!\!m_1\mu_3\!\!\!\!$ & & $\!\!\!\!m_1\mu_3\!\!\!\!$
\end{tabular}
    \caption{Replacement of $\times_5^1$-columns}
    \label{fig:exclinclF6colx51}
\end{figure}
\end{minipage}
\begin{minipage}{.48\textwidth}
\begin{figure}[H]
    \centering
\begin{tabular}{ccccccccc}
    $\times_4^2$ & & ${\underline{\!\times\!}\,}_4^2$ & & ${\underline{\!\times\!}\,}_2^2$ &  & ${\underline{\!\times\!}\,}_2^2$ & & ${\underline{\!\times\!}\,}_2^2$\\
    \begin{tikzpicture}[baseline = 6.2ex,scale = 0.42]
        \draw[fill=mycolor] (1.2-.6, 0.5) rectangle (1 *1.2+.6, 6.5);
        \node[vertex] (A1) at (1*1.2,1) {$a$};
        \node[vertex] (A2) at (1*1.2,2) {$a$};
        \node[vertex] (A3) at (1*1.2,3) {$a$};
        \node[vertex] (A4) at (1*1.2,4) {$a$};
        \node[vertex] (A5) at (1*1.2,5) {$\times$};
        \node[vertex] (A6) at (1*1.2,6) {$\times$};
        \end{tikzpicture}
    & \!\!$=$\!\! &
        \begin{tikzpicture}[baseline = 6.2ex,scale = 0.42]
        \draw[fill=mycolor] (1.2-.6, 0.5) rectangle (1 *1.2+.6, 6.5);
        \pairA[1.2];
        \draw[fill=white] (1*1.2-0.3, 0.8) rectangle (1*1.2+0.3, 4.4);
        \node[vertex] (a) at (1*1.2,2.5) {$a$};
        \node[vertex] (A5) at (1*1.2,5) {$\times$};
        \node[vertex] (A6) at (1*1.2,6) {$\times$};
        \end{tikzpicture}
    & \!\!\!\!\!$+$\!\! &
        \begin{tikzpicture}[baseline = 6.2ex,scale = 0.42]
        \draw[fill=mycolor] (1.2-.5, 0.5) rectangle (1 *1.2+.5, 6.5);
        \pairA[1.2];
        \draw[pair] (A1.center) to node[fill=mycolor, inner sep=1.0pt] {$b$} (A2.center);
        \draw[pair] (A3.center) to node[fill=mycolor, inner sep=1.0pt] {$a$} (A4.center);
        \node[vertex] (A5) at (1*1.2,5) {$\times$};
        \node[vertex] (A6) at (1*1.2,6) {$\times$};
        \end{tikzpicture}
    & \!\!$+$\!\! &
        \begin{tikzpicture}[baseline = 6.2ex,scale = 0.42]
        \draw[fill=mycolor] (1.2-.7, 0.5) rectangle (1 *1.2+.5, 6.5);
        \pairA[1.2];
        \draw[pair] (A2.center) to node[fill=mycolor, inner sep=1.0pt] {$a$} (A3.center);
        \draw[pair] (A1.center) to [bend left=40] node[fill=mycolor, inner sep=1.0pt] {$\!a\!$} (A4.center);
        \node[vertex] (A5) at (1*1.2,5) {$\!\times$};
        \node[vertex] (A6) at (1*1.2,6) {$\!\times$};
        \draw (1.2-.7, 0.5) rectangle (1 *1.2+.5, 6.5);
        \end{tikzpicture}
    & \!\!$+$\!\! &
        \begin{tikzpicture}[baseline = 6.2ex,scale = 0.42]
        \draw[fill=mycolor,draw=none] (1.2-.7, 0.5) rectangle (1 *1.2+.7, 6.5);
        \pairA[1.2];
        \draw[pair] (A1.center) to [bend right=60] node[fill=mycolor, inner sep=1.0pt] {$\!a\!$} (A3.center);
        \draw[pair] (A2.center) to [bend left=60] node[fill=mycolor, inner sep=1.0pt] {$\!a\!$} (A4.center);
        \node[vertex] (A5) at (1*1.2,5) {$\!\times$};
        \node[vertex] (A6) at (1*1.2,6) {$\!\times$};
        \draw (1.2-.7, 0.5) rectangle (1 *1.2+.7, 6.5);
        \end{tikzpicture}
    \\[2.5em]
    $\mu_4$ & & $\mu_4\!-\!3$ & & $1$ &  & $1$ & & $1$
\end{tabular}
    \caption{Replacement of $\times_4^2$-columns}
    \label{fig:exclinclF6colx42}
\end{figure}
\end{minipage}

\vspace{1em}
Note that with these replacement rules, we have the restriction that for $\underline{3}$ and $\underline{4}$ columns, $a \neq b$. For the other column types, some or all of the elements with marks and/or different labels may be equal to each other.

Throughout the rest of the paper, we assume $T$ is a paired/blocked marked permutation table.

\subsubsection{Connection formula}\label{sec:connectionformula}
The ${\underline{\!\times\!}\,}_5^1$ columns are always disjoint from each other and the rest of the table. The fact that there cannot be a single uncovered element together with the rule that there is at most one mark per row forces the covered element in a ${\underline{\!\times\!}\,}_5^1$ column to always coincide with the block element. Hence, in terms of generating functions, one gets, by collecting those ${\underline{\!\times\!}\,}_5^1$ columns,
\begin{equation}
F_6(t) = \sum _{r=0}^6 m_1^r \left(1+\left(\mu _5-10\mu_3\right) m_1 t\right){}^{6-r} Z_r(t),
\end{equation}
where $Z_r(t)$ are the generating functions of pair/block tables with exactly $r$ marks which do not contain ${\underline{\!\times\!}\,}_5^1$ columns. We further set $m_1 = 1$ in those tables since $m_1^r$ has been already pulled out.

\subsubsection{Factorisation of disjoint components}\label{sec:disjointcomponentfactorisation}
We utilize the fact the exponential generating function (EGF) of permutation tables (of any kind) factorizes over disjoint columns (see \cite{BeckPhD}). Already in the case $m_1 = 0$ treated in \cite{BLP25}, we identified the 
three types of sub-components of paired/blocked tables which are disjoint from everything else, namely $\underline{6}$-columns, cycles of $\underline{4}$ columns and cycles of $\underline{3}$ columns. We call the union of these three sub-components the \emph{floating component} of the table. In terms of EGFs, they contribute a factor
\begin{equation}
Fl(t) = \underbrace{e^{(\mu_6-15)t}}_{\text{known $6$-columns}} \underbrace{\frac{e^{-15(\mu_4\!-3)t}}{(1-(\mu_4-3)t)^{15}}}_{\text{cycles of known $4$-columns}} \underbrace{(1\!+\! \mu_3^2t)^{10} e^{-10\mu_3^2t}}_{\text{cycles of $3$-columns}} = O(t)/N(t'),
\end{equation}
where the last equality comes from Equation \eqref{Eq:O}. This factor must appear also in the general case. More concretely, the generating function $F_6(t)$ corresponding to all paired/blocked tables equals to the product of $O(t)/N(t')$ with a generating function corresponding to all paired/blocked tables \textbf{lacking} $\underline{6}$-columns, cycles of $\underline{4}$ columns and cycles of $\underline{3}$ columns.

\section{Decomposition of a Table Based on its Shell and Core}
Given a table $T$, we can perform the following operations to reduce $T$ to a simpler table.
\begin{definition}
Given a table $T$, we say that an element $a$ is uncovered if it appears unmarked $6$ times in $T$ (i.e., it is never replaced by an $\times$). Otherwise, we say that the element $a$ is covered.
\end{definition}
\begin{definition}
Given a table $T$, we define the reduced table $T_{red}$ to be the table which is obtained from $T$ after applying the following steps:
\begin{enumerate}
\item If there is an uncovered element $a$ which appears as a triple or block of size $4$ 
in a column which also contains an $\times$, we can delete the entries containing $a$ and then merge the two columns which contained these entries. After doing this merge, we put the resulting column in the same place as the column which contained an $a$ but did not have an $a$ in its first row (deleting the column which had an $a$ in its first row). We repeat this step until there are no such uncovered elements $a$. 
\item If there is an uncovered element $b$ which appears as a block of size $4$ in one column 
and appears $2$ times in a different column with an $\times$, we can delete the entries containing $b$ and then merge the two columns which contained these entries. Similarly, if there is an uncovered element $b$ which appears as a triple in two different columns, we can delete the entries containing $b$ and then merge the two columns which contained these entries. After doing such a merge, we put the resulting column in the same place as the column which contained a $b$ but did not have a $b$ in its first row (deleting the column which had a $b$ in its first row). Finally, if there is an uncovered $b$ which appears as a block of size $6$ or appears $6$ times in a column  
due to a previous merge (in which case it will either appear as a block of size $4$ and a pair or appear as two triples), we can delete this column. Note that if $T$ starts with an unknown $6$-column then we do not delete this column as this was present from the start rather than being due to a merger of columns.

We repeat these steps until there are no such uncovered elements $b$.
\end{enumerate}
\end{definition}
\begin{remark}
When merging columns, it doesn't really matter which column is deleted as long as we have a consistent procedure for picking this column.
\end{remark}
For our analysis, it is very helpful to split the columns of $T_{red}$ into two parts which we call the shell and core of $T$.
\begin{definition}[Shells and Cores of a Table]
Given a table $T$, we define the shell $\shellT$ of $T$ to be the columns of $T_{red}$ such that at least one of the following is true.
\begin{enumerate}
\item The column contains at least one $\times$.
\item The column contains a triple of a covered element (in which case it must also contain at least one $\times$ or a triple of a second covered element).
\item The column contains a block of size $4$ or $5$ of a covered element.
\end{enumerate}
We define the core $\coreT$ of $T$ to be the columns of $T_{red}$ which are not in the shell.
\end{definition}
\begin{example}\label{ex:smalltabledecomp}
If $T$ is the following table
\begin{center}
\begin{tabular}{|c|c|c|c|c|c|c|}
 \hline
  $\times a$ & $b$ & $d$ & $c$ & $e$ & $f$ & $g$\\
 \hline
  $\times b$ & $d$ & $g$ & $c$ & $a$ & $e$ & $f$\\
 \hline
  $c$ & $\times a$ & $d$ & $f$ & $e$ & $b$ & $g$\\
 \hline
  $c$ & $b$ & $d$ & $a$ & $e$ & $f$ & $g$\\
 \hline
  $c$ & $b$ & $d$ & $f$ & $a$ & $e$ & $g$\\
 \hline
  $c$ & $d$ & $g$ & $a$ & $e$ & $b$ & $f$\\
 \hline
\end{tabular}
\end{center}
where the first, third, and fifth columns are all known $4$-columns, then in the first step of the decomposition, we merge the columns containing $c$ which gives us the following table:
\begin{center}
\begin{tabular}{|c|c|c|c|c|c|c|}
 \hline
  $\times a$ & $b$ & $d$ & $e$ & $f$ & $g$\\
 \hline
  $\times b$ & $d$ & $g$ & $a$ & $e$ & $f$\\
 \hline
  $f$ & $\times a$ & $d$ & $e$ & $b$ & $g$\\
 \hline
  $a$ & $b$ & $d$ & $e$ & $f$ & $g$\\
 \hline
  $f$ & $b$ & $d$ & $a$ & $e$ & $g$\\
 \hline
  $a$ & $d$ & $g$ & $e$ & $b$ & $f$\\
 \hline
\end{tabular}
\end{center}
In the second stage of the decomposition, we merge the columns containing $d$, merge the columns containing $e$, and merge the columns containing $g$. This gives the following table $T_{red}$:
\begin{center}
\begin{tabular}{|c|c|c|c|c|c|}
 \hline
  $\times a$ & $b$ & $f$\\
 \hline
  $\times b$ & $f$ & $a$\\
 \hline
  $f$ & $\times a$ & $b$\\
 \hline
  $a$ & $b$ & $f$\\
 \hline
  $f$ & $b$ & $a$\\
 \hline
  $a$ & $f$ & $b$\\
 \hline
\end{tabular}
\end{center}
$\shellT$ and $\coreT$ are as follows:
\begin{center}
\begin{tabular}{|c|c|}
 \hline
  $\times a$ & $b$\\
 \hline
  $\times b$ & $f$\\
 \hline
  $f$ & $\times a$\\
 \hline
  $a$ & $b$\\
 \hline
  $f$ & $b$\\
 \hline
  $a$ & $f$\\
 \hline
\end{tabular}
\ \ \ \ 
\begin{tabular}{|c|}
 \hline
  $f$\\
 \hline
  $a$\\
 \hline
  $b$\\
 \hline
  $f$\\
 \hline
  $a$\\
 \hline
  $b$\\
 \hline
\end{tabular}
\end{center}
\end{example}
\begin{example}\label{ex:largetabledecomp}
If $T$ is the following table
\begin{center}
\begin{tabular}{|c|c|c|c|c|c|c|c|c|c|c|c|c|c|c|c|c|c|c|}
 \hline
  $j$ & $e$ & $\times a$ & $d$ & $n$ & $r$ & $k$ & $f$ & $i$ & $l$ & $c$ & $o$ & $m$ & $s$ & $p$ & $q$ & $g$ & $h$ & $b$\\
 \hline
  $h$ & $e$ & $\times a$ & $d$ & $o$ & $r$ & $k$ & $f$ & $i$ & $l$ & $c$ & $q$ & $m$ & $p$ & $s$ & $n$ & $g$ & $j$ & $b$\\
 \hline
  $j$ & $e$ & $h$ & $d$ & $o$ & $\times b$ & $a$ & $r$ & $i$ & $l$ & $c$ & $q$ & $f$ & $s$ & $p$ & $n$ & $g$ & $m$ & \cellcolor[HTML]{FD6864}$k$\\
 \hline
  $f$ & $e$ & $j$ & $\times c$ & $n$ & $d$ & $a$ & $r$ & $i$ & $l$ & $b$ & $o$ & $g$ & $p$ & $s$ & $q$ & $m$ & $h$ & \cellcolor[HTML]{67FD9A}$k$\\
 \hline
  $f$ & $\times c$ & $h$ & $l$ & $n$ & $d$ & $a$ & $r$ & $i$ & $e$ & $b$ & $o$ & $g$ & $s$ & $p$ & $q$ & $m$ & $j$ & \cellcolor[HTML]{67FD9A}$k$\\
 \hline
  $h$ & $\times c$ & $j$ & $l$ & $o$ & $d$ & $a$ & $r$ & $i$ & $e$ & $b$ & $q$ & $f$ & $s$ & $p$ & $n$ & $g$ & $m$ & \cellcolor[HTML]{FD6864}$k$\\
 \hline
\end{tabular}
\end{center}
where the column with six $i$ is a known $6$-column, the columns with four $a$, $e$, $g$, $l$, $p$, $r$, and $s$ are known four columns, and the column with four $k$ is an unknown $4$-column with the given pairing then in the first step of the decomposition, we merge the columns containing $d$, we merge the columns containing $e$, and we merge the columns containing $l$ (this must be done after merging the columns containing $e$) which gives us the following table:
\begin{center}
\begin{tabular}{|c|c|c|c|c|c|c|c|c|c|c|c|c|c|c|c|}
 \hline
  $j$ & $\times a$ & $n$ & $r$ & $k$ & $f$ & $i$ & $c$ & $o$ & $m$ & $s$ & $p$ & $q$ & $g$ & $h$ & $b$\\
 \hline
  $h$ & $\times a$ & $o$ & $r$ & $k$ & $f$ & $i$ & $c$ & $q$ & $m$ & $p$ & $s$ & $n$ & $g$ & $j$ & $b$\\
 \hline
  $j$ & $h$ & $o$ & $\times b$ & $a$ & $r$ & $i$ & $c$ & $q$ & $f$ & $s$ & $p$ & $n$ & $g$ & $m$ & \cellcolor[HTML]{FD6864}$k$\\
 \hline
  $f$ & $j$ & $n$ & $\times c$ & $a$ & $r$ & $i$ & $b$ & $o$ & $g$ & $p$ & $s$ & $q$ & $m$ & $h$ & \cellcolor[HTML]{67FD9A}$k$\\
 \hline
  $f$ & $h$ & $n$ & $\times c$ & $a$ & $r$ & $i$ & $b$ & $o$ & $g$ & $s$ & $p$ & $q$ & $m$ & $j$ & \cellcolor[HTML]{67FD9A}$k$\\
 \hline
  $h$ & $j$ & $o$ & $\times c$ & $a$ & $r$ & $i$ & $b$ & $q$ & $f$ & $s$ & $p$ & $n$ & $g$ & $m$ & \cellcolor[HTML]{FD6864}$k$\\
 \hline
\end{tabular}
\end{center}
In the second step, we merge the columns containing $r$ and we merge the columns containing $g$. We also delete the column containing $i$ (as this is a known $6$-column), the columns containing $p$ and $s$ (as this is a cycle of known $4$-columns) and the columns containing $n$, $o$, and $q$ (as this is a cycle of columns with two unmarked triples). This gives the following table $T_{red}$: 
\begin{center}
\begin{tabular}{|c|c|c|c|c|c|c|c|}
 \hline
  $j$ & $\times a$ & $k$ & $f$ & $c$ & \cellcolor[HTML]{FD6864}$m$ & $h$ & $b$\\
 \hline
  $h$ & $\times a$ & $k$ & $f$ & $c$ & \cellcolor[HTML]{FD6864}$m$ & $j$ & $b$\\
 \hline
  $j$ & $h$ & $a$ & $\times b$ & $c$ & $f$ & $m$ & \cellcolor[HTML]{FD6864}$k$\\
 \hline
  $f$ & $j$ & $a$ & $\times c$ & $b$ & \cellcolor[HTML]{67FD9A}$m$ & $h$ & \cellcolor[HTML]{67FD9A}$k$\\
 \hline
  $f$ & $h$ & $a$ & $\times c$ & $b$ & \cellcolor[HTML]{67FD9A}$m$ & $j$ & \cellcolor[HTML]{67FD9A}$k$\\
 \hline
  $h$ & $j$ & $a$ & $\times c$ & $b$ & $f$ & $m$ & \cellcolor[HTML]{FD6864}$k$\\
 \hline
\end{tabular}
\end{center}
$\shellT$ and $\coreT$ are as follows:
\begin{center}
\begin{tabular}{|c|c|c|c|}
 \hline
  $\times a$ & $k$ & $f$ & $c$\\
 \hline
  $\times a$ & $k$ & $f$ & $c$\\
 \hline
  $h$ & $a$ & $\times b$ & $c$\\
 \hline
  $j$ & $a$ & $\times c$ & $b$\\
 \hline
  $h$ & $a$ & $\times c$ & $b$\\
 \hline
  $j$ & $a$ & $\times c$ & $b$\\
 \hline
\end{tabular}
\ \ \ \ 
\begin{tabular}{|c|c|c|c|}
 \hline
  $j$ & \cellcolor[HTML]{FD6864}$m$ & $h$ & $b$\\
 \hline
  $h$ & \cellcolor[HTML]{FD6864}$m$ & $j$ & $b$\\
 \hline
  $j$ & $f$ & $m$ & \cellcolor[HTML]{FD6864}$k$\\
 \hline
  $f$ & \cellcolor[HTML]{67FD9A}$m$ & $h$ & \cellcolor[HTML]{67FD9A}$k$\\
 \hline
  $f$ & \cellcolor[HTML]{67FD9A}$m$ & $j$ & \cellcolor[HTML]{67FD9A}$k$\\
 \hline
  $h$ & $f$ & $m$ & \cellcolor[HTML]{FD6864}$k$\\
 \hline
\end{tabular}
\end{center}
\end{example}
For our analysis, we will enumerate possible shells and cores so it is useful to note the properties which shells and cores must have.
\begin{definition}
We say that $T$ is a partial table if $T$ satisfies the following conditions:
\begin{enumerate}
\item Each row of $T$ contains at most one $\times $ and most one of each element.
\item Each element which is present in $T$ appears a total of $2$, $4$, or $6$ times in $T$ (including the times where it is marked).
\item The columns of $T$ do not contain any singletons. In other words, each column of $T$ consists of marks, pairs, triples, and/or blocks of size $4$, $5$, or $6$.
\end{enumerate}
Whenever a partial table $T$ contains an unknown $4$-column or $6$-column, it must specify how these elements are paired up with each other.

Note that any table with nonzero expected value must be a partial table.
\end{definition}
\begin{definition}[Shells]
We say that a partial table $S$ is a shell if every column of $S$ contains a mark, a triple of a covered element, and/or a block of size $4$ of a covered element.
\end{definition}
\begin{definition}
We say that a partial table $C$ is a core if it only contains pairs (i.e., $C$ does not have any marks, triples, or blocks of size $4$, $5$, or $6$).
\end{definition}
\subsection{Reversing the Shell/Core Decomposition}
We now describe what data is needed in order to recover a table $T$ from $\shellT$ and $\coreT$. For this process, it is convenient to keep track of each component separately and then combine them at the end. In particular, we keep track of an expanded shell, an expanded core, and a floating component consisting of columns which were deleted. 
\begin{definition}
Given a table $T$, we define the expanded shell of $T$ to be the set of columns of $T$ which are present in $\shellT$ or become part of the columns of $\shellT$ through merges. Similarly, we define the expanded core of $T$ to be the set of columns of $T$ which are present in $\coreT$ or become part of the columns of $\coreT$ through merges. We define the floating component of $T$ to be the columns of $T$ which are not part of the expanded shell or expanded core of $T$.
\end{definition}
To reverse the second step of the decomposition, we need the following data:
\begin{enumerate}
\item To obtain the floating component of $T$, we need to know which known $6$-columns, cycles of known $4$-columns, and cycles of pairs of triples were deleted and the locations of these deleted columns in relation to each other.
\item To obtain the expanded shell after the first step of the decomposition, we need to know which paths of known $4$-columns were merged into the columns of $\shellT$ and the locations of the columns which were deleted during these merges. Similarly, to obtain the expanded core, we need to know which paths of known $4$-columns were merged into the columns of $\coreT$ and the locations of the columns which were deleted during these merges.
\end{enumerate}
\begin{remark}
To specify a path of known $4$-columns which are merged into a column of the shell or core without specifying the locations of these columns, we can give the following data:
\begin{enumerate}
\item[1.] The pair of elements which starts in one of these known $4$-columns and ends up as a pair in the resulting column of the shell or core. We call this pair the surviving pair as it is the only part remaining from these columns after the merges.
\item[2.] The final location of the surviving pair. This will be a pair in the shell or core and we consider this pair to be the destination of the path.
\item[3.] The sequence of elements which are deleted if at each step, we take the element which appears as a block of size $4$ in the column with the surviving pair and merge the two columns which contain this element.
\end{enumerate}
As in \cite{BLP23}, we represent this path of known $4$-columns by the sequence of elements which are deleted followed by the element in the surviving pair (we put this element last because it ends up at the destination of the path).

For example, we would represent the merges
\begin{center}
\begin{tabular}{|c|c|c|c|}
 \hline
  $f$ & $d$ & $e$ & $a$\\
 \hline
  $f$ & $d$ & $e$ & $a$\\
 \hline
  $b$ & $f$ & $d$ & $e$\\
 \hline
  $b$ & $f$ & $d$ & $e$\\
 \hline
  $c$ & $f$ & $d$ & $e$\\
 \hline
  $c$ & $f$ & $d$ & $e$\\
 \hline
\end{tabular}
\ \ $\to$ \ \ 
\begin{tabular}{|c|c|}
 \hline
  $a$\\
 \hline
  $a$\\
 \hline
  $b$\\
 \hline
  $b$\\
 \hline
  $c$\\
 \hline
  $c$\\
 \hline
\end{tabular}
\end{center}
by the path $e \to d \to f \to a$. and note the destination of the path (i.e., the final position of the surviving pair of $a$).
\end{remark}
To reverse the first step of the decomposition, we need the following data:
\begin{enumerate}
\item For each column in the shell with at least one mark, we need to know how the column was originally partitioned. As described in Section \ref{sec:rowsplitdefinition}, this partition can be described by a set of splits, each of which is a 4-2 split or a 3-3 split.
\item For each 4-2 split, we need to know about the sequence of uncovered elements which were originally in between the two parts (starting from the part of size $2$) and the locations of the columns which were deleted during the merges.
\item For each 3-3 split, we need to know about the sequence of uncovered elements which were originally in between the two parts (starting from the part of size $3$ which includes the first row) and the locations of the columns which were deleted during the merges.
\end{enumerate}
\begin{example}
Recall that the shell and core for Example \ref{ex:smalltabledecomp} are as follows: 
\begin{center}
\begin{tabular}{|c|c|}
 \hline
  $\times a$ & $b$\\
 \hline
  $\times b$ & $f$\\
 \hline
  $f$ & $\times a$\\
 \hline
  $a$ & $b$\\
 \hline
  $f$ & $b$\\
 \hline
  $a$ & $f$\\
 \hline
\end{tabular}
\ \ \ \ 
\begin{tabular}{|c|}
 \hline
  $f$\\
 \hline
  $a$\\
 \hline
  $b$\\
 \hline
  $f$\\
 \hline
  $a$\\
 \hline
  $b$\\
 \hline
\end{tabular}
\end{center}
For Example \ref{ex:smalltabledecomp}, there is no floating component. To recover the expanded shell after the first stage of the decomposition, we need to know that there was a path $g \to d \to f$ of known $4$-columns leading to the pair of $f$ in the second column of the shell, the known $4$-column with $4$ $d$ was to the right of the second column of the shell, and the known $4$-column with $4$ $g$ was to the right of the known $4$-column with $4$ $d$.

To recover the expanded core of $T$, we need to know that there was a path $e \to a$ of a known $4$-column leading to the pair of $a$ in the core and that the known $4$-column with $4$ $e$ was to the left of the column in the core. 

Using this data, we can deduce that the expanded shell and core after the first step of the decomposition were as follows.
\begin{center}
\begin{tabular}{|c|c|c|c|}
 \hline
  $\times a$ & $b$ & $d$ & $g$\\
 \hline
  $\times b$ & $d$ & $g$ & $f$\\
 \hline
  $f$ & $\times a$ & $d$ & $g$\\
 \hline
  $a$ & $b$ & $d$ & $g$\\
 \hline
  $f$ & $b$ & $d$ & $g$\\
 \hline
  $a$ & $d$ & $g$ & $f$\\
 \hline
\end{tabular}
\ \ \ \ 
\begin{tabular}{|c|c|}
 \hline
  $e$ & $f$\\
 \hline
  $a$ & $e$\\
 \hline
  $e$ & $b$\\
 \hline
  $e$ & $f$\\
 \hline
  $a$ & $e$\\
 \hline
  $e$ & $b$\\
 \hline
\end{tabular}
\end{center}
To reverse the effect of the first step of the decomposition on the expended shell of $T$, we need to know that the first column originally had a 4-2 split between the top two rows and the bottom four rows, that the uncovered element $c$ was originally between these two parts, and that the column with elements $c$, $c$, $f$, $a$, $f$, $a$ which was deleted was between the column with a block of $4$ $d$ and the column with a block of $4$ $g$. With this data, we can deduce that the expanded shell of $T$ is
\begin{center}
\begin{tabular}{|c|c|c|c|c|}
 \hline
  $\times a$ & $b$ & $d$ & $c$ & $g$\\
 \hline
  $\times b$ & $d$ & $g$ & $c$ & $f$\\
 \hline
  $c$ & $\times a$ & $d$ & $f$ & $g$\\
 \hline
  $c$ & $b$ & $d$ & $a$ & $g$\\
 \hline
  $c$ & $b$ & $d$ & $f$ & $g$\\
 \hline
  $c$ & $d$ & $g$ & $a$ & $f$\\
 \hline
\end{tabular}
\end{center}
We can complete the recovery of $T$ by specifying which columns of $T$ are in the shell, which columns of $T$ are in the core, and which columns of $T$ are in the floating component (which is empty in this example).
\end{example}
\begin{example}
Recall that the shell and core for Example \ref{ex:largetabledecomp} are as follows: 
\begin{center}
\begin{tabular}{|c|c|c|c|}
 \hline
  $\times a$ & $k$ & $f$ & $c$\\
 \hline
  $\times a$ & $k$ & $f$ & $c$\\
 \hline
  $h$ & $a$ & $\times b$ & $c$\\
 \hline
  $j$ & $a$ & $\times c$ & $b$\\
 \hline
  $h$ & $a$ & $\times c$ & $b$\\
 \hline
  $j$ & $a$ & $\times c$ & $b$\\
 \hline
\end{tabular}
\ \ \ \ 
\begin{tabular}{|c|c|c|c|}
 \hline
  $j$ & \cellcolor[HTML]{FD6864}$m$ & $h$ & $b$\\
 \hline
  $h$ & \cellcolor[HTML]{FD6864}$m$ & $j$ & $b$\\
 \hline
  $j$ & $f$ & $m$ & \cellcolor[HTML]{FD6864}$k$\\
 \hline
  $f$ & \cellcolor[HTML]{67FD9A}$m$ & $h$ & \cellcolor[HTML]{67FD9A}$k$\\
 \hline
  $f$ & \cellcolor[HTML]{67FD9A}$m$ & $j$ & \cellcolor[HTML]{67FD9A}$k$\\
 \hline
  $h$ & $f$ & $m$ & \cellcolor[HTML]{FD6864}$k$\\
 \hline
\end{tabular}
\end{center}
We first reverse the second step of the decomposition. In order to recover the floating component of $T$, we need the following data:
\begin{enumerate}
\item We need to know about the known $6$-column with a block of six $i$.
\item We need to know about the cycle $s \to p \to s$ of known $4$-columns where the blocks of size $4$ are in rows $1,3,5,6$.
\item We need to know about the cycle $n \to o \to q \to n$ of columns with two pairs of triples where the first triple is in rows $1,4,5$ and the second triple is in rows $2,3,6$.
\item We need to know that the columns of the floating component are ordered so that the order of the elements in the first row is $n,i,o,s,p,q$.
\end{enumerate}
From this data, we can deduce that the floating component is 
\begin{center}
\begin{tabular}{|c|c|c|c|c|c|}
 \hline
  $n$ & $i$ & $o$ & $s$ & $p$ & $q$\\
 \hline
  $o$ & $i$ & $q$ & $p$ & $s$ & $n$\\
 \hline
  $o$ & $i$ & $q$ & $s$ & $p$ & $n$\\
 \hline
  $n$ & $i$ & $o$ & $p$ & $s$ & $q$\\
 \hline
  $n$ & $i$ & $o$ & $s$ & $p$ & $q$\\
 \hline
  $o$ & $i$ & $q$ & $s$ & $p$ & $n$\\
 \hline
\end{tabular}
\end{center}
In order to recover the expanded shell and core after the first step of the decomposition, we need the following data:
\begin{enumerate}
\item There is a path $r \to f$ of known $4$-columns whose destination is the pair of $f$ in the third column of the shell. The column which contained the elements $r,r,\times b,\times c,\times c,\times c$ (after the first stage of the decomposition) was originally between the first and second columns of $\shellT$.
\item There is a path $g \to m$ of known $4$-columns whose destination is the pair of $m$ in rows $4$ and $5$ of the second column of the core. The column which contained a block of size $4$ of $g$ and a pair of $m$ was originally in between the second and third columns of $\coreT$.
\end{enumerate}
Using this data, we can deduce that the expanded shell and core after the first step of the decomposition were as follows:
\begin{center}
\begin{tabular}{|c|c|c|c|c|}
 \hline
  $\times a$ & $r$ & $k$ & $f$ & $c$\\
 \hline
  $\times a$ & $r$ & $k$ & $f$ & $c$\\
 \hline
  $h$ & $\times b$ & $a$ & $r$ & $c$\\
 \hline
  $j$ & $\times c$ & $a$ & $r$ & $b$\\
 \hline
  $h$ & $\times c$ & $a$ & $r$ & $b$\\
 \hline
  $j$ & $\times c$ & $a$ & $r$ & $b$\\
 \hline
\end{tabular}
\ \ \ \ 
\begin{tabular}{|c|c|c|c|c|}
 \hline
  $j$ & $m$ & $g$ & $h$ & $b$\\
 \hline
  $h$ & $m$ & $g$ & $j$ & $b$\\
 \hline
  $j$ & $f$ & $g$ & $m$ & \cellcolor[HTML]{FD6864}$k$\\
 \hline
  $f$ & $g$ & $m$ & $h$ & \cellcolor[HTML]{67FD9A}$k$\\
 \hline
  $f$ & $g$ & $m$ & $j$ & \cellcolor[HTML]{67FD9A}$k$\\
 \hline
  $h$ & $f$ & $g$ & $m$ & \cellcolor[HTML]{FD6864}$k$\\
 \hline
\end{tabular}
\end{center}
To reverse the effect of the first step of the decomposition on the expended shell of $T$, we need the following data:
\begin{enumerate}
\item For the column with elements $r,r,\times b,\times c,\times c,\times c$, we originally had a 4-2 split between rows $1,2,3,4$ and $5,6$ and a 3-3 split between rows $1,2,3$ and $4,5,6$.
\item For the 4-2 split, we originally had the uncovered elements $e$ and $l$ between the part of size $2$ in rows 5 and 6 and the part of size $4$ (which was originally split by the 3-3 split).
\item The column with elements $e,e,e,e,\times c,\times c$ was the first column of the expanded shell of $T$ and the column with elements $l,l,l,l,e,e$ was between the column with elements $f,f,r,r,r,r$ and the column with elements $c,c,c,b,b,b$.
\item For the 3-3 split, we originally had the uncovered element $d$ in between the two parts of size $3$. 
\item The column with elements $d,d,d,\times c,l,l$ was originally in between the column with elements $\times a,\times a,h,j,h,j$ and the column with elements $r,r,\times b,d,d,d$.
\end{enumerate}
With this data, we can deduce that the expanded shell of $T$ is
\begin{center}
\begin{tabular}{|c|c|c|c|c|c|c|c|}
 \hline
  $e$ & $\times a$ & $d$ & $r$ & $k$ & $f$ & $l$ & $c$\\
 \hline
  $e$ & $\times a$ & $d$ & $r$ & $k$ & $f$ & $l$ & $c$\\
 \hline
  $e$ & $h$ & $d$ & $\times b$ & $a$ & $r$ & $l$ & $c$\\
 \hline
  $e$ & $j$ & $\times c$ & $d$ & $a$ & $r$ & $l$ & $b$\\
 \hline
  $\times c$ & $h$ & $l$ & $d$ & $a$ & $r$ & $e$ & $b$\\
 \hline
  $\times c$ & $j$ & $l$ & $d$ & $a$ & $r$ & $e$ & $b$\\
 \hline
\end{tabular}
\end{center}
We can complete the recovery of $T$ by specifying which columns of $T$ are in the shell, which columns of $T$ are in the core, and which columns of $T$ are in the floating component.
\end{example}
\subsection{Sketch for Computing $F_6(t)$}
We now give a high-level sketch for how this decomposition of tables into an expanded shell, an expanded core, and a floating component can be used to compute $F_6(t)$. We need the following definition.  
\begin{definition}
We say that a partial table $C$ is compatible with a shell $S$ if the union of $S$ and $C$ contains six copies of each element which is present in $S$ or $C$, one for each row.

We say that a core $C$ is a minimum compatible core for $S$ if $C$ is compatible with $S$ and $C$ does not contain any elements which are not in $S$.
\end{definition}
As described in more detail in Section \ref{sec:Ftformula}, we can determine the contribution to the exponential generating function $F_6(t)$ from all tables with a given shell $S$ as follows: 
\begin{enumerate}
\item Given a shell $S$, we compute the exponential generating function corresponding to all of the possible expanded shells $S'$ which result in $S$.
\item We compute the net number of minimum compatible cores for $S$ (see Definition \ref{def:mincompatible}) and use this to compute the exponential generating function corresponding to all of the possible expanded cores $C'$ which are compatible with $S$.
\item 
The exponential generating function for the floating component can be computed using the same analysis as \cite{BLP25}.
\item Combining these exponential generating functions gives the contribution to the exponential generating function $F_6(t)$ from all tables $T$ such that $\shellT = S$.
\end{enumerate}
We obtain $F_6(t)$ by summing this contribution over all possible shells $S$.
\section{Exponential Generating Function for the Sixth Moment}\label{sec:Ftformula}
In this section, we implement the high-level plan described above and obtain an exact formula for $F_6(t)$.  
\subsection{Overview of the Formula for $F_6(t)$}
We can find $F_6(t)$ by summing the contributions from all of the possible shells (up to permutations of the rows, columns, and elements). For a given shell $S$ (note that $S$ includes pairings for unknown $4$-columns), we have the following factors:
\begin{enumerate}
\item In order to take permutations of the rows, columns, and elements into account while avoiding double counting, we have a factor of $\frac{6!}{|Aut(S)|}$ where $Aut(S)$ is the set of automorphisms of $S$ (see Definition \ref{def:Sautomorphisms}).
\item For each column $\mathbf{c}$ of $S$, we have a factor $G(\mathbf{c})$ (see Definition \ref{def:columnfactor}) which captures factors coming from the elements of $\mathbf{c}$, paths of known $4$-columns and/or paths of pairs of triples of elements between pieces of $\mathbf{c}$ if $\mathbf{c}$ is split up, and chains of known $4$-columns attached to pairs of elements of $\mathbf{c}$.
\item If the shell has $k$ more elements than columns, letting $t' = \frac{t}{(1 - (\mu_4 - 3)t)^3}$,  letting $N^{(k)}(t) = \frac{d^{k}N(t)}{dt^k}$ be the kth derivative of $N(t)$, and letting $Weight_{Cores}(S)$ be the total weight of the minimal cores compatible with $S$ (see Definition \ref{def:netweight}), we have a factor of  
\[
{t'}^{k}N^{(k)}(t')\cdot\frac{Weight_{Cores}(S)}{\prod_{j=1}^{k}{j(j+2)(j+4)}}
\]
to account for the contribution from the possible cores and chains of known 4-columns attached to the cores.
\item We have a factor of $\frac{O(t)}{N(t')}$ to account for the floating component which consists of known $6$-columns, cycles of known 4-columns, and cycles of columns with two unmarked triples of elements.
\item In order to ensure that we count tables with the correct sign, we have a sign $sign(S)$ for $S$ which we obtain by comparing $S$ to a canonical table for $S$ (see Definition \ref{def:shellcoresign}). 
\end{enumerate}
Thus, the total contribution from shells which are isomorphic to $S$ is 
\[
sign(S)\frac{6!}{|Aut(S)|} \cdot \frac{O(t)}{N(t')} \cdot \left(\prod_{\text{columns } \mathbf{c} \text{ of } S}G(\mathbf{c})\right) \cdot \frac{{t'}^{k}N^{(k)}(t')Weight_{Cores}(S)}{\prod_{j=1}^{k}{j(j+2)(j+4)}}
\]
\subsection{Precise Description of the Formula for Finding $F_6(t)$}
We now give a more precise description of the terms in our formula. 
\subsubsection{Shell automorphisms} 
We start by defining shell automorphisms.
\begin{definition}
We define the following sets of permutations on $6 \times j$ tables $T$.
\begin{enumerate}
\item We define $Sym(Rows) \cong Sym(6)$ to be the group of permutations of the rows.
\item We define $Sym(Columns) \cong Sym(j)$ to be the group of permutations of the columns.
\item We define $Sym(Elements)$ to be the group of permutations of the elements of the table.
\end{enumerate}
Note that $Sym(Rows) \cong Sym(6)$ and $Sym(Columns) \cong Sym(j)$. If the table $T$ contains $j$ elements than $Sym(Elements) \cong Sym(j)$ but we will also consider shells which have more elements than columns.
\end{definition}
\begin{definition}[Shell Automorphisms]\label{def:Sautomorphisms}
Given a shell $S$, we define the automorphism group $Aut(S)$ of $S$ to be the subgroup of $Sym(Rows) \times Sym(Columns) \times Sym(Elements)$ which preserves $S$. We define the orbit $orbit(S)$ of $S$ to be the set of shells which can be obtained from $S$ by applying an element of $Sym(Rows) \times Sym(Columns) \times Sym(Elements)$ to $S$.
\end{definition}
\subsubsection{Row splits of a column $\mathbf{c}$}\label{sec:rowsplitdefinition}
Before describing the column factor $G(\mathbf{c})$, we need to define the set $Splits(\mathbf{c)}$ of possible sets of row splits a column can have.
\begin{definition}[Row splits]\label{def:rowsplits}
Given a column $\mathbf{c}$ of a shell $S$, we define $Splits(\mathbf{c)}$ to be the set of possible sets $RS$ of partitions $(R_1,R_2)$ of the rows of $\mathbf{c}$ which have the following properties:
\begin{enumerate}
\item Either $|R_1| = 4$ and $|R_2| = 2$ (in which case we call $(R_1,R_2)$ a $4$-$2$ split) or $|R_1| = 3$ and $|R_2| = 3$ (in which case we call $(R_1,R_2)$ a $3$-$3$ split).
\item For each 4-2 split $(R_1,R_2) \in RS$, $\mathbf{c}$ has an $\times $ in the rows in $R_2$.
\item For each 3-3 split $(R_1,R_2) \in RS$, one of the following three cases holds for both $R_1$ and $R_2$:
\begin{enumerate}
\item[1.] $\mathbf{c}$ contains an unmarked triple in these rows.
\item[2.] $\mathbf{c}$ contains an $\times $ and an unmarked pair in these rows.
\item[3.] $\mathbf{c}$ contains three $\times $ in these rows.
\end{enumerate}
\item For any distinct pair of partitions $(R_1,R_2),(R'_1,R'_2) \in RS$, either $R'_2 \subseteq R_1$ or $R'_2 \subseteq R_2$. 
\end{enumerate}
\end{definition}
\begin{example}\label{ex:rowsplitexample}
If $\mathbf{c}$ is a column which has marks in rows $1$, $2$, and $3$ and a triple in rows $4$, $5$, and $6$ then there are eight possible sets $RS$ of partitions:
\begin{enumerate}
\item[1.] $RS = \emptyset$.
\item[2.] $RS = \{(\{1,2,3\},\{4,5,6\})\}$.
\item[3.] $RS = \{(\{1,2\},\{3,4,5,6\})\}$, $RS = \{(\{1,3\},\{2,4,5,6\})\}$, and $RS = \{(\{2,3\},\{1,4,5,6\})\}$.
\item[4.] $RS = \{(\{1,2\},\{3,4,5,6\}), (\{1,2,3\},\{4,5,6\})\}$, $RS = \{(\{1,3\},\{2,4,5,6\}), (\{1,2,3\},\{4,5,6\})\}$, and $RS = \{(\{2,3\},\{1,4,5,6\}), (\{1,2,3\},\{4,5,6\})\}$.
\end{enumerate}
\end{example}
\subsubsection{Column factors $G(\mathbf{c})$}\label{sec:shell-column-factors}
We now describe the column factor $G(\mathbf{c})$ for each column $\mathbf{c}$ of the shell. Before giving the definition of $G(\mathbf{c})$, we describe the various factors in $G(\mathbf{c})$.

For each column $\mathbf{c}$ of the shell, we sum over the possible sets of row splits $RS \in Splits(\mathbf{c)}$. Unless the column has five unmarked copies of an element (we handle this case separately)
, we have the following factors:
\begin{enumerate}
\item We have a factor of $t$ from the column itself.
\item For each $\times $ in the column, we have a factor of $m_1$.
\item We have factors from the unmarked elements in a column. If a column has a block of size $4$, 
we have a factor of $(\mu_4 - 3)$. If a column has unmarked triple(s) of elements, we have a factor of $\mu_3$ for each such triple.
\item For each 4-2 row split in $RS$, 
we have a factor of $\frac{(\mu_4 - 3)t}{1 - (\mu_4 - 3)t}$ to account for the fact that there is a path of known 4-columns of length at least $1$ between the split columns.
\item If there is a 3-3 row split in $RS$, 
we have a factor of $\frac{-{\mu_3^2}t}{1 + {\mu_3^2}t}$ to account for the fact that there is a path of pairs of triples of an element of length at least $1$ between the split columns.
\item For each unmarked pair of elements, we have a factor of $\frac{1}{1 - (\mu_4 - 3)t}$ to account for the fact that a chain of known 4-columns may be attached to this pair.
\end{enumerate}
Based on this, we define $G(\mathbf{c})$ as follows.
\begin{definition} \ 
\begin{enumerate}
\item Given an $RS \in Splits(\mathbf{c)}$, we define $Num_{4,2}(RS)$ to be the number of $4$-$2$ splits in $RS$ (which will be $0$, $1$, $2$, or $3$) and we define $Num_{3,3}(RS)$ to be the number of $3$-$3$ splits in $RS$ (which will be $0$ or $1$).
\item We define $1_{\text{known } 4}(\mathbf{c)}$ to be $1$ if $\mathbf{c}$ is a known $4$-column and $0$ otherwise.
\item We define $Num_{triples}(\mathbf{c)}$ to be the number of unmarked triples $\mathbf{c}$ contains.
\item We define $Num_{pairs}(\mathbf{c})$ to be the number of unmarked pairs $\mathbf{c}$ contains.
\end{enumerate}
\end{definition}
\begin{definition}[Column factors]\label{def:columnfactor}
Given a column $\mathbf{c}$ of $S$, unless $\mathbf{c}$ is a column with a block of size $5$ of an element 
and one marked copy of this element, we define
\begin{align*}
G(\mathbf{c}) = &t(m_1)^{\# \text{ of marks in } \mathbf{c}}{(\mu_4 - 3)}^{1_{\text{known } 4}(\mathbf{c)}}{\mu_3}^{Num_{triples}(\mathbf{c)}}\left(\frac{1}{1 - (\mu_4 - 3)t}\right)^{Num_{pairs}(\mathbf{c})}\\
&\sum_{RS \in Splits(\mathbf{c)}}{\left(\frac{(\mu_4 - 3)t}{1 - (\mu_4 - 3)t}\right)^{Num_{4,2}(RS)}\left(\frac{-{\mu_3^2}t}{1 + {\mu_3^2}t}\right)^{Num_{3,3}(RS)}}.
\end{align*}
If $\mathbf{c}$ is a column with block of size $5$ of an element 
and one marked copy of this element, we define $G(\mathbf{c}) = m_1(\mu_5 - 10\mu_3)t$.
\end{definition}
We recall the different types of columns in the shell.
\vspace{-1em}
\bgroup
\renewcommand{\arraystretch}{0.8}
\begin{table}[H]
\centering
\setlength{\tabcolsep}{5.2pt}
\begin{GrayBox}
\vspace{-0.5em}
\centering
\begin{tabular}{ccccccccc}
$\underline{4}$ & $\underline{3}$ & ${\underline{\!\times\!}\,}_5^1$ & ${\underline{\!\times\!}\,}_3^1$ & ${\underline{\!\times\!}\,}_4^2$ & ${\underline{\!\times\!}\,}_2^2$ & ${\underline{\!\times\!}\,}^3$ & $\,{\underline{\!\times\!}\,}^4\!\!$ & $\,{\underline{\!\times\!}\,}^6\!\!\!$ \\[0.5em] 
        \begin{tikzpicture}[baseline = 8.2ex,scale = 0.42]
        \draw[fill=mycolor] (1.2-.6, 0.5) rectangle (1 *1.2+.6, 6.5);
        \pairA[1.2];
        \draw[fill=white] (1*1.2-0.3, 2.6) rectangle (1*1.2+0.3, 6.2);
        \node[vertex] (a) at (1*1.2,4.5) {$a$};
        \draw[pair] (A1.center) to node[fill=mycolor, inner sep=1.0pt] {$b$} (A2.center);
        \end{tikzpicture}

    &
        \begin{tikzpicture}[baseline = 8.2ex,scale = 0.42]
        \draw[fill=mycolor] (1.3-0.6, 0.5) rectangle (1*1.3+0.6, 6.5);
        \pairA[1.3];
        \draw (A1.center) -- (A3.center);
        \draw (A4.center) -- (A6.center);
        \foreach \i in {1,...,6}
            \draw[fill=white, draw=black] (A\i) circle (6pt);
        \node[vertex] (a) at (1.0,4.5) {$a$};
        \node[vertex] (a) at (1.0,2.5) {$b$};
        \end{tikzpicture}
    &
        \begin{tikzpicture}[baseline = 8.2ex,scale = 0.42]
        \draw[fill=mycolor] (1.3-0.6, 0.5) rectangle (1*1.3+0.6, 6.5);
        \pairA[1.3];
        \node at (A6) {$\times$};
        \draw[fill=white] (1*1.2-0.25, 0.8) rectangle (1*1.2+0.45, 5.4);
        \node[vertex] (a) at (1*1.2+0.07,3.3) {$a$};
        \end{tikzpicture}
    &
        \begin{tikzpicture}[baseline = 8.2ex,scale = 0.42]
        \draw[fill=mycolor] (1.3-0.6, 0.5) rectangle (1*1.3+0.6, 6.5);
        \pairA[1.3];
        \node at (A6) {$\times$};
        \draw (A3.center) -- (A5.center);
        \foreach \i in {3,...,5}
            \draw[fill=white, draw=black] (A\i) circle (6pt);
        \node[vertex] (a) at (1.6,3.5) {$a$};
        \draw[pair] (A1.center) to node[fill=mycolor, inner sep=1.0pt] {$b$} (A2.center);
        \end{tikzpicture}
    &
        \begin{tikzpicture}[baseline = 8.2ex,scale = 0.42]
        \draw[fill=mycolor] (1.3-0.6, 0.5) rectangle (1*1.3+0.6, 6.5);
        \pairA[1.3];
        \node at (A6) {$\times$};
        \node at (A5) {$\times$};
        \draw[fill=white] (1*1.2-0.25, 0.8) rectangle (1*1.2+0.45, 4.4);
        \node[vertex] (a) at (1*1.2+0.07,2.5) {$a$};
        \end{tikzpicture}
    &
        \begin{tikzpicture}[baseline = 8.2ex,scale = 0.42]
        \draw[fill=mycolor] (1.3-0.6, 0.5) rectangle (1*1.3+0.6, 6.5);
        \pairA[1.3];
        \node at (A6) {$\times$};
        \node at (A5) {$\times$};
        \draw[pair] (A1.center) to [bend left=45] node[inner sep=1.0pt] {$b$\,\,\,\,\,\,\,} (A3.center);
        \draw[pair] (A2.center) to [bend left=-45] node[inner sep=1.0pt] {\,\,\,\,\,\,\,$a$} (A4.center);
        \end{tikzpicture}
    &
        \begin{tikzpicture}[baseline = 8.2ex,scale = 0.42]
        \draw[fill=mycolor] (1.3-0.6, 0.5) rectangle (1*1.3+0.6, 6.5);
        \pairA[1.3];
        \node at (A6) {$\times$};
        \node at (A5) {$\times$};
        \node at (A4) {$\times$};
        \draw (A1.center) -- (A3.center);
        \foreach \i in {1,...,3}
            \draw[fill=white, draw=black] (A\i) circle (6pt);
        \node[vertex] (a) at (1.6,2.5) {$a$};
        \end{tikzpicture}
    &
        \begin{tikzpicture}[baseline = 8.2ex,scale = 0.42]
        \draw[fill=mycolor] (1.3-0.6, 0.5) rectangle (1*1.3+0.6, 6.5);
        \pairA[1.3];
        \node at (A6) {$\times$};
        \node at (A5) {$\times$};
        \node at (A4) {$\times$};
        \node at (A3) {$\times$};
        \draw[pair] (A1.center) to node[fill=mycolor, inner sep=1.0pt] {$a$} (A2.center);
        \end{tikzpicture}
    &
        \begin{tikzpicture}[baseline = 8.2ex,scale = 0.42]
        \draw[fill=mycolor] (1.3-0.6, 0.5) rectangle (1*1.3+0.6, 6.5);
        \pairA[1.3];
        \node at (A6) {$\times$};
        \node at (A5) {$\times$};
        \node at (A4) {$\times$};
        \node at (A3) {$\times$};
        \node at (A2) {$\times$};
        \node at (A1) {$\times$};
        \end{tikzpicture}
\end{tabular}
\vspace{-0.6em}
\end{GrayBox}
\end{table}
\egroup
\vspace{-1em}
\begin{lemma}\label{lem:columnfactors}
The column factors for the various types of columns are as follows:
\begin{enumerate}
\item ${\underline{\!\times\!}\,}_5^1$: $G(\mathbf{c}) = m_1(\mu_5 - 10\mu_3)t$.
\item ${\underline{\!\times\!}\,}_3^1$: $G(\mathbf{c}) = \frac{m_1{\mu_3}t}{(1-(\mu_4 - 3)t)(1 + {\mu_3^2}t)}$.
\item $\underline{4}$: $G(\mathbf{c}) = \frac{(\mu_4 - 3)t}{1-(\mu_4 - 3)t}$.
\item ${\underline{\!\times\!}\,}_4^2$: $G(\mathbf{c}) = \frac{m_1^2(\mu_4-3)t}{1-(\mu_4 - 3)t}$.
\item ${\underline{\!\times\!}\,}_2^2$: $G(\mathbf{c}) = {m_1^2}t\left(\frac{1 - {\mu_3^2}t + 2{\mu_3^2}(\mu_4-3)t^2}{(1 - (\mu_4-3)t)^{3}(1 + {\mu_3^2}t)}\right)$.
\item $\underline{3}$: $G(\mathbf{c}) = \frac{{\mu_3^2}t}{(1 + {\mu_3^2}t)}$.
\item ${\underline{\!\times\!}\,}^3$: $G(\mathbf{c}) = \frac{{m_1^3}\mu_3(1+2(\mu_4 - 3)t)t}{(1 - (\mu_4-3)t)(1 + {\mu_3^2}t)}$.
\item ${\underline{\!\times\!}\,}^4$: $G(\mathbf{c}) = {m_1^4}t\left(\frac{1+4(\mu_4-3)t -2(\mu_4-3)^{2}t^{2}-3{\mu_3^2}t + 6{\mu_3^2}(\mu_4-3)^{2}t^3}{(1 - (\mu_4-3)t)^{3}(1 + {\mu_3^2}t)}\right)$.
\item ${\underline{\!\times\!}\,}^6$: 
\begin{align*}
G(\mathbf{c}) = {m_1^6}t&\Bigg(\frac{1 + 12(\mu_4-3)t +18(\mu_4-3)^{2}t^{2}-16(\mu_4-3)^{3}t^{3}}{(1 - (\mu_4-3)t)^{3}(1 + {\mu_3^2}t)} \\
&+\frac{-9{\mu_3^2}t -18{\mu_3^2}(\mu_4-3)t^2 +18{\mu_3^2}(\mu_4-3)^{2}t^3 + 24{\mu_3^2}(\mu_4-3)^{3}t^4}{(1 - (\mu_4-3)t)^{3}(1 + {\mu_3^2}t)}\Bigg).
\end{align*}
\end{enumerate}
\end{lemma}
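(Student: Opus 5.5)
The plan is to evaluate Definition~\ref{def:columnfactor} directly for each of the nine column types. The common prefactor $t\,m_1^{\#\times}(\mu_4-3)^{1_{\text{known }4}}\mu_3^{Num_{triples}}(1-(\mu_4-3)t)^{-Num_{pairs}}$ can be read off the column picture, so all of the work goes into the sum over $RS\in Splits(\mathbf{c})$. Abbreviate $u=\frac{(\mu_4-3)t}{1-(\mu_4-3)t}$ and $v=\frac{-\mu_3^2t}{1+\mu_3^2t}$, and note that $1+u=\frac{1}{1-(\mu_4-3)t}$ and $1+v=\frac{1}{1+\mu_3^2t}$. For each column type the sum is a polynomial $P(u,v)$, linear in $v$ because $Num_{3,3}\le 1$. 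Once $P$ is found, each formula in the lemma should follow by clearing the denominators $(1-(\mu_4-3)t)^3(1+\mu_3^2t)$.

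I will work out $Splits(\mathbf{c})$ type by type using conditions 1--4 of Definition~\ref{def:rowsplits}.

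\emph{Columns with few marks.} For ${\underline{\!\times\!}\,}_5^1$ the value is fixed by definition. For $\underline{4}$, $\underline{3}$ and ${\underline{\!\times\!}\,}_4^2$ I expect the only admissible splits to be $\emptyset$ (no marks, or no valid 3-3 partition) together with at most one 3-3 split. For $\underline{3}$, the two triples do admit the 3-3 split, giving $\mu_3^2t(1+v)$ with no pair factor. For $\underline{4}$ and ${\underline{\!\times\!}\,}_4^2$, the block of four forbids a 3-3 split and a 4-2 split would need marks in a pair of rows. Here I must check whether the ${\underline{\!\times\!}\,}_4^2$ split \{block\}/\{two marks\} is allowed: if it were, the factor would carry an extra $(1+u)$. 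The stated answer $\frac{m_1^2(\mu_4-3)t}{1-(\mu_4-3)t}$ is consistent with the factor $1+u$ coming from that one admissible 4-2 split. Likewise for $\underline{4}$ the pair factor $1/(1-(\mu_4-3)t)$ appears with no split. For ${\underline{\!\times\!}\,}_3^1$ (one mark, one pair, one triple) the only option is the 3-3 split \{mark+pair\}/\{triple\}, giving $m_1\mu_3t(1+u)(1+v)$.

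\emph{Columns with two to three marks.} For ${\underline{\!\times\!}\,}_2^2$ (two marks, two pairs) the candidates are the 4-2 split separating the two marks, and 3-3 splits each containing one mark and one pair; there are 2 such splits. Condition 4 governs which of these may coexist. The sum should come out to $1+u+2v+(\text{compatible combos})$, and I will match it against $\frac{1-\mu_3^2t+2\mu_3^2(\mu_4-3)t^2}{(1-(\mu_4-3)t)(1+\mu_3^2t)}$ after removing the $(1+u)^2$ pair factor. For ${\underline{\!\times\!}\,}^3$, Example~\ref{ex:rowsplitexample} already lists the eight sets, which give $1+v+3u+3uv=(1+3u)(1+v)$ times the pair-free prefactor $m_1^3\mu_3t$. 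Using $1+3u=\frac{1+2(\mu_4-3)t}{1-(\mu_4-3)t}$, this is exactly item 7, and it serves as a sanity check of the method.

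\emph{Columns with four or six marks.} For ${\underline{\!\times\!}\,}^4$ and ${\underline{\!\times\!}\,}^6$ I will count laminar families. In ${\underline{\!\times\!}\,}^6$ every 2-subset of rows is a possible $R_2$ of a 4-2 split; condition 4 forces the chosen 2-sets to be pairwise disjoint or nested, and nesting is impossible, so they form a partial matching: $1+15u+45u^2+15u^3$ matchings. Each 3-3 split, 10 in number, then combines with matchings whose 2-sets each lie within one half: $3+3$ single edges and $9$ pairs of edges, giving $v(1+6u+9u^2)$ per split. The total is $1+15u+45u^2+15u^3+10v(1+3u)^2$, which I then convert into the stated rational form. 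For ${\underline{\!\times\!}\,}^4$ the same counting applies, with 2-subsets of the four marked rows, 3-3 splits of the form $\{3\text{ marks}\}$ versus $\{\text{mark}+\text{pair}\}$, and one pair factor.

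The main obstacle I expect is the precise reading of condition 4 and condition 3 in the mixed cases ${\underline{\!\times\!}\,}_2^2$ and ${\underline{\!\times\!}\,}^4$. Specifically, I need to know whether a 4-2 split's $R_2$ may straddle the two halves of a 3-3 split, and whether a 4-2 split can isolate a mark together with a non-mark. My approach is to fix the interpretation against the already-verified ${\underline{\!\times\!}\,}^3$ case and against the closed forms in the lemma, then confirm the resulting polynomial identities by direct algebra.
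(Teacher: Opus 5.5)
Your proposal is correct and follows the paper's proof: both evaluate Definition~\ref{def:columnfactor} column type by column type by enumerating $Splits(\mathbf{c})$. Your partial-matching and laminar-family organization reproduces the paper's counts, giving $1+u+2v$, $1+6u+3u^2+4v(1+3u)$, and $1+15u+45u^2+15u^3+10v(1+3u)^2$; these match the paper's tallies $1,4,6,12,3$ and $1,15,45,15,10,60,90$. The points you planned to settle by comparing with the target formulas are in fact fixed by the definition itself. Condition 2 requires both rows of $R_2$ to be marked, and condition 4 forces each $R_2$ into one half of the 3-3 split; since every 3-3 split of ${\underline{\!\times\!}\,}_2^2$ separates its two marks, that column has no mixed sets.
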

\begin{proof}
For the ${\underline{\!\times\!}\,}_3^1$ column shown, there are two possible sets $RS$ of partitions:
\begin{enumerate}
\item[1.] $RS = \emptyset$.
\item[2.] $RS = \{(\{1,5,6\},\{2,3,4\})\}$. 
\end{enumerate}
Thus,
\[
\sum_{RS \in Splits(\mathbf{c)}}{\left(\frac{(\mu_4 - 3)t}{1 - (\mu_4 - 3)t}\right)^{Num_{4,2}(RS)}\left(\frac{-{\mu_3^2}t}{1 + {\mu_3^2}t}\right)^{Num_{3,3}(RS)}} = 1 + \left(\frac{-{\mu_3^2}t}{1 + {\mu_3^2}t}\right) = \frac{1}{1 + {\mu_3^2}t}
\]
Multiplying this by ${m_1}{\mu_3}t\left(\frac{1}{1-(\mu_4 - 3)}\right)$ gives $G(\mathbf{c}) = \frac{{m_1}{\mu_3}t}{(1-(\mu_4 - 3))(1 + {\mu_3^2}t)}$.

For the $\underline{4}$ column shown, the only possible set $RS$ of partitions is $RS = \emptyset$ so $G(\mathbf{c}) = \frac{(\mu_4 - 3)t}{1-(\mu_4 - 3)t}$.

For the ${\underline{\!\times\!}\,}_4^2$ column shown, there are two possible sets $RS$ of partitions:
\begin{enumerate}
\item[1.] $RS = \emptyset$.
\item[2.] $RS = \{(\{1,2\},\{3,4,5,6\})\}$.
\end{enumerate}
Thus,
\begin{align*}
\sum_{RS \in Splits(\mathbf{c)}}{\left(\frac{(\mu_4 - 3)t}{1 - (\mu_4 - 3)t}\right)^{Num_{4,2}(RS)}\left(\frac{-{\mu_3^2}t}{1 + {\mu_3^2}t}\right)^{Num_{3,3}(RS)}} &= 1 + \left(\frac{(\mu_4-3)t}{1 - (\mu_4-3)t}\right) \\
&= \frac{1}{1 - (\mu_4-3)t}.
\end{align*}
Multiplying this by ${m_1^2}(\mu_4-3)t$ gives
$G(\mathbf{c}) = \frac{{m_1^2}(\mu_4-3)t}{1-(\mu_4 - 3)}$.

For the ${\underline{\!\times\!}\,}_2^2$ column shown, there are four possible sets $RS$ of partitions:
\begin{enumerate}
\item[1.] $RS = \emptyset$.
\item[2.] $RS = \{(\{1,2\},\{3,4,5,6\})\}$.
\item[3.] $RS = \{(\{1,3,5\},\{2,4,6\})\}$.
\item[4.] $RS = \{(\{2,4,6\},\{2,4,6\})\}$.
\end{enumerate}
Thus,
\begin{align*}
&\sum_{RS \in Splits(\mathbf{c)}}{\left(\frac{(\mu_4 - 3)t}{1 - (\mu_4 - 3)t}\right)^{Num_{4,2}(RS)}\left(\frac{-{\mu_3^2}t}{1 + {\mu_3^2}t}\right)^{Num_{3,3}(RS)}} \\
&= 1 + \left(\frac{(\mu_4-3)t}{1 - (\mu_4-3)t}\right) +2\left(\frac{-{\mu_3^2}t}{1 + {\mu_3^2}t}\right)\\
&= \frac{(1 - (\mu_4-3)t)(1 + {\mu_3^2}t) + (\mu_4-3)t(1 + {\mu_3^2}t) - 2{\mu_3^2}t(1 - (\mu_4-3)t)}{(1 - (\mu_4-3)t)(1 + {\mu_3^2}t)}\\
&= \frac{1 - {\mu_3^2}t + 2{\mu_3^2}(\mu_4-3)t^2}{(1 - (\mu_4-3)t)(1 + {\mu_3^2}t)}.
\end{align*}
Multiplying this by ${m_1^2}t\left(\frac{1}{1 - (\mu_4-3)t}\right)^2$ gives
$G(\mathbf{c}) = {m_1^2}t\left(\frac{1 - {\mu_3^2}t + 2{\mu_3^2}(\mu_4-3)t^2}{(1 - (\mu_4-3)t)^{3}(1 + {\mu_3^2}t)}\right)$.

For the $\underline{3}$ column shown, there are two possible sets $RS$ of partitions:
\begin{enumerate}
\item[1.] $RS = \emptyset$.
\item[2.] $RS = \{(\{1,5,6\},\{2,3,4\})\}$. 
\end{enumerate}
Thus,
\[
\sum_{RS \in Splits(\mathbf{c)}}{\left(\frac{(\mu_4 - 3)t}{1 - (\mu_4 - 3)t}\right)^{Num_{4,2}(RS)}\left(\frac{-{\mu_3^2}t}{1 + {\mu_3^2}t}\right)^{Num_{3,3}(RS)}} = 1 + \left(\frac{-{\mu_3^2}t}{1 + {\mu_3^2}t}\right) = \frac{1}{1 + {\mu_3^2}t}.
\]
Multiplying this by ${\mu_3^2}t$ gives $G(\mathbf{c}) = \frac{{\mu_3^2}t}{(1 + {\mu_3^2}t)}$.

As noted in example \ref{ex:rowsplitexample} For the ${\underline{\!\times\!}\,}^3$ column shown, there are eight possible sets $RS$ of partitions:
\begin{enumerate}
\item[1.] $RS = \emptyset$.
\item[2.] $RS = \{(\{1,2,3\},\{4,5,6\})\}$.
\item[3.] $RS = \{(\{1,2\},\{3,4,5,6\})\}$, $RS = \{(\{1,3\},\{2,4,5,6\})\}$, and $RS = \{(\{2,3\},\{1,4,5,6\})\}$.
\item[4.] $RS = \{(\{1,2\},\{3,4,5,6\}), (\{1,2,3\},\{4,5,6\})\}$, $RS = \{(\{1,3\},\{2,4,5,6\}), (\{1,2,3\},\{4,5,6\})\}$, and $RS = \{(\{2,3\},\{1,4,5,6\}), (\{1,2,3\},\{4,5,6\})\}$.
\end{enumerate}
Thus,
\begin{align*}
&\sum_{RS \in Splits(\mathbf{c)}}{\left(\frac{(\mu_4 - 3)t}{1 - (\mu_4 - 3)t}\right)^{Num_{4,2}(RS)}\left(\frac{-{\mu_3^2}t}{1 + {\mu_3^2}t}\right)^{Num_{3,3}(RS)}} \\
&= 1 + \left(\frac{-{\mu_3^2}t}{1 + {\mu_3^2}t}\right) +3\left(\frac{(\mu_4-3)t}{1 - (\mu_4-3)t}\right) + 3\left(\frac{(\mu_4-3)t}{1 - (\mu_4-3)t}\right)\left(\frac{-{\mu_3^2}t}{1 + {\mu_3^2}t}\right)\\
&= \frac{1+2(\mu_4 - 3)t}{(1 - (\mu_4-3)t)(1 + {\mu_3^2}t)}\\
\end{align*}
Multiplying this by ${m_1^3}{\mu_3}t$ gives 
$G(\mathbf{c}) = \frac{{m_1^3}\mu_3(1+2(\mu_4 - 3)t)t}{(1 - (\mu_4-3)t)(1 + {\mu_3^2}t)}$.

For the ${\underline{\!\times\!}\,}^4$ column shown, there are $26$ possible sets $RS$ of partitions:
\begin{enumerate}
\item[1.] $RS = \emptyset$.
\item[2.] There are $4$ different ways for $RS$ to contain a single 3-3 split.
\item[3.] There are $6$ different ways for $RS$ to contain a single 4-2 split.
\item[4.] There are $12$ different ways for $RS$ to contain a 4-2 split and a 3-3 split. For example, we can have 
$RS = \{(\{1,2\},\{3,4,5,6\}), (\{1,2,3\},\{4,5,6\})\}$.
\item[5.] There are $3$ different ways for $RS$ to contain two 4-2 splits. For example, we can have 
$RS = \{(\{1,2\},\{3,4,5,6\}), (\{3,4\},\{1,2,5,6\})\}$.
\end{enumerate}
Thus,
\begin{align*}
&\sum_{RS \in Splits(\mathbf{c)}}{\left(\frac{(\mu_4 - 3)t}{1 - (\mu_4 - 3)t}\right)^{Num_{4,2}(RS)}\left(\frac{-{\mu_3^2}t}{1 + {\mu_3^2}t}\right)^{Num_{3,3}(RS)}} \\
&= 1 + 4\left(\frac{-{\mu_3^2}t}{1 + {\mu_3^2}t}\right) +6\left(\frac{(\mu_4-3)t}{1 - (\mu_4-3)t}\right) + 12\left(\frac{(\mu_4-3)t}{1 - (\mu_4-3)t}\right)\left(\frac{-{\mu_3^2}t}{1 + {\mu_3^2}t}\right) + 3\left(\frac{(\mu_4-3)t}{1 - (\mu_4-3)t}\right)^2\\
&= \frac{(1 - (\mu_4-3)t)^{2}(1 + {\mu_3^2}t)-4(1 - (\mu_4-3)t)^{2}{\mu_3^2}t + 6(\mu_4-3)t(1 - (\mu_4-3)t)(1 + {\mu_3^2}t)}{(1 - (\mu_4-3)t)^{2}(1 + {\mu_3^2}t)} \\
&+\frac{-12(\mu_4-3)(1 - (\mu_4-3)t){\mu_3^2}t^2 + 3(\mu_4-3)^{2}(1 + {\mu_3^2}t)t^{2}}{(1 - (\mu_4-3)t)^{2}(1 + {\mu_3^2}t)} \\
&=\frac{1+4(\mu_4-3)t -2(\mu_4-3)^{2}t^{2}-3{\mu_3^2}t + 6{\mu_3^2}(\mu_4-3)^{2}t^3}{(1 - (\mu_4-3)t)^{2}(1 + {\mu_3^2}t)}
\end{align*}
Multiplying this by $\frac{{m_1^4}t}{1 - (\mu_4-3)t}$ gives 
\[
G(\mathbf{c}) = {m_1^4}t\left(\frac{1+4(\mu_4-3)t -2(\mu_4-3)^{2}t^{2}-3{\mu_3^2}t + 6{\mu_3^2}(\mu_4-3)^{2}t^3}{(1 - (\mu_4-3)t)^{3}(1 + {\mu_3^2}t)}\right)
\]

Finally, for an ${\underline{\!\times\!}\,}^6$ column, there are $236$ possible sets $RS$ of partitions:
\begin{enumerate}
\item[1.] $RS = \emptyset$.
\item[2.] There are $10$ different ways for $RS$ to contain a single 3-3 split.
\item[3.] There are $15$ different ways for $RS$ to contain a single 4-2 split.
\item[4.] There are $60$ different ways for $RS$ to contain a 4-2 split and a 3-3 split. For example, we can have 
$RS = \{(\{1,2\},\{3,4,5,6\}), (\{1,2,3\},\{4,5,6\})\}$.
\item[5.] There are $45$ different ways for $RS$ to contain two 4-2 splits. For example, we can have 
$RS = \{(\{1,2\},\{3,4,5,6\}), (\{3,4\},\{1,2,5,6\})\}$.
\item[4.] There are $90$ different ways for $RS$ to contain two 4-2 splits and a 3-3 split. For example, we can have 
$RS = \{(\{1,2\},\{3,4,5,6\}), (\{1,2,3\},\{4,5,6\}), (\{1,2,3,4\},\{5,6\})\}$.
\item[5.] There are $15$ different ways for $RS$ to contain three 4-2 splits. For example, we can have 
$RS = \{(\{1,2\},\{3,4,5,6\}), (\{3,4\},\{1,2,5,6\}), (\{5,6\},\{1,2,3,4\})\}$.
\end{enumerate}
Thus,
\begin{align*}
&\sum_{RS \in Splits(\mathbf{c)}}{\left(\frac{(\mu_4 - 3)t}{1 - (\mu_4 - 3)t}\right)^{Num_{4,2}(RS)}\left(\frac{-{\mu_3^2}t}{1 + {\mu_3^2}t}\right)^{Num_{3,3}(RS)}} \\
&= 1 + 10\left(\frac{-{\mu_3^2}t}{1 + {\mu_3^2}t}\right) +15\left(\frac{(\mu_4-3)t}{1 - (\mu_4-3)t}\right) + 60\left(\frac{(\mu_4-3)t}{1 - (\mu_4-3)t}\right)\left(\frac{-{\mu_3^2}t}{1 + {\mu_3^2}t}\right) \\
&+45\left(\frac{(\mu_4-3)t}{1 - (\mu_4-3)t}\right)^2 + 90\left(\frac{(\mu_4-3)t}{1 - (\mu_4-3)t}\right)^2\left(\frac{-{\mu_3^2}t}{1 + {\mu_3^2}t}\right) + 15\left(\frac{(\mu_4-3)t}{1 - (\mu_4-3)t}\right)^3\\
&= \frac{(1 - (\mu_4-3)t)^{3}(1 + {\mu_3^2}t)-10(1 - (\mu_4-3)t)^{3}{\mu_3^2}t + 15(\mu_4-3)t(1 - (\mu_4-3)t)^{2}(1 + {\mu_3^2}t)}{(1 - (\mu_4-3)t)^{3}(1 + {\mu_3^2}t)} \\
&+\frac{-60(\mu_4-3)(1 - (\mu_4-3)t)^2{\mu_3^2}t^2 + 45(\mu_4-3)^{2}(1 - (\mu_4-3)t)(1 + {\mu_3^2}t)t^{2}}{(1 - (\mu_4-3)t)^{3}(1 + {\mu_3^2}t)} \\
&+\frac{-90(\mu_4-3)^2(1 - (\mu_4-3)t){\mu_3^2}t^3 + 15(\mu_4-3)^{3}(1 + {\mu_3^2}t)t^{3}}{(1 - (\mu_4-3)t)^{3}(1 + {\mu_3^2}t)} \\
&=\frac{1 + 12(\mu_4-3)t +18(\mu_4-3)^{2}t^{2}-16(\mu_4-3)^{3}t^{3}}{(1 - (\mu_4-3)t)^{3}(1 + {\mu_3^2}t)} \\
&+\frac{-9{\mu_3^2}t -18{\mu_3^2}(\mu_4-3)t^2 +18{\mu_3^2}(\mu_4-3)^{2}t^3 + 24{\mu_3^2}(\mu_4-3)^{3}t^4}{(1 - (\mu_4-3)t)^{3}(1 + {\mu_3^2}t)}
\end{align*}
Multiplying this by ${m_1^6}t$ gives the given formula for $G(\mathbf{c})$.
\end{proof}
\subsubsection{Counting Minimal Compatible Cores for a Shell $S$}
\begin{definition}\label{def:mincompatible}
Recall that a core $C$ is a minimal compatible core for a shell $S$ if 
\begin{enumerate}
\item $C$ does not contain any elements which are not in $S$.
\item If an element is contained in rows $R$ of $S$ then it is contained in rows $[6] \setminus R$ of $C$. In other words, if we combine $S$ and $C$ into a table $T$ then each element which appears in $S$ and/or $C$ appears exactly once in each row of $T$ (including appearances where it is marked).
\end{enumerate}
Given a shell $S$, we define $MinCores(S)$ to be the set of minimal compatible cores of $S$.
\end{definition}
\begin{example}\label{ex:mincoresfirstexample}
If $S$ is the shell
\begin{center}
\begin{tabular}{|c|}
 \hline
  $\times a$\\
 \hline
  $\times a$\\
 \hline
  $b$\\
 \hline
  $b$\\
 \hline
  $c$\\
 \hline
  $c$\\
 \hline
\end{tabular}
\end{center}
then the minimum compatible cores for $S$ can be obtained by starting with the tables shown below
\begin{center}
\begin{tabular}{|c|c|}
 \hline
  $b$ & $c$\\
 \hline
  $b$ & $c$\\
 \hline
  $a$ & $c$\\
 \hline
  $a$ & $c$\\
 \hline
  $a$ & $b$\\
 \hline
  $a$ & $b$\\
 \hline
\end{tabular}
\ \ \ \ 
\begin{tabular}{|c|c|}
 \hline
  $b$ & $c$\\
 \hline
  $b$ & $c$\\
 \hline
  $a$ & $c$\\
 \hline
  $a$ & $c$\\
 \hline
  $b$ & $a$\\
 \hline
  $b$ & $a$\\
 \hline
\end{tabular}
\ \ \ \
\begin{tabular}{|c|c|}
 \hline
  $b$ & $c$\\
 \hline
  $b$ & $c$\\
 \hline
  $c$ & $a$\\
 \hline
  $c$ & $a$\\
 \hline
  $a$ & $b$\\
 \hline
  $a$ & $b$\\
 \hline
\end{tabular}
\ \ \ \ 
 \begin{tabular}{|c|c|}
 \hline
  $b$ & $c$\\
 \hline
  $b$ & $c$\\
 \hline
  $c$ & $a$\\
 \hline
  $c$ & $a$\\
 \hline
  $b$ & $a$\\
 \hline
  $b$ & $a$\\
 \hline
\end{tabular}
\ \ \ \ 
\begin{tabular}{|c|c|}
 \hline
  $b$ & $c$\\
 \hline
  $c$ & $b$\\
 \hline
  $a$ & $c$\\
 \hline
  $c$ & $a$\\
 \hline
  $a$ & $b$\\
 \hline
  $b$ & $a$\\
 \hline
\end{tabular}
\ \ \ \ 
\begin{tabular}{|c|c|}
 \hline
  $b$ & $c$\\
 \hline
  $c$ & $b$\\
 \hline
  $a$ & $c$\\
 \hline
  $c$ & $a$\\
 \hline
  $b$ & $a$\\
 \hline
  $a$ & $b$\\
 \hline
\end{tabular}
\ \ \ \
\begin{tabular}{|c|c|}
 \hline
  $b$ & $c$\\
 \hline
  $c$ & $b$\\
 \hline
  $c$ & $a$\\
 \hline
  $a$ & $c$\\
 \hline
  $a$ & $b$\\
 \hline
  $b$ & $a$\\
 \hline
\end{tabular}
\ \ \ \ 
 \begin{tabular}{|c|c|}
 \hline
  $b$ & $c$\\
 \hline
  $c$ & $b$\\
 \hline
  $c$ & $a$\\
 \hline
  $a$ & $c$\\
 \hline
  $b$ & $a$\\
 \hline
  $a$ & $b$\\
 \hline
\end{tabular}
\end{center}
and making the following modifications:
\begin{enumerate}
\item For the tables which have two $4$-columns, we need to specify the pairings for these columns to obtain a core $C$.
\item For each of these cores, we can swap the columns of the table to obtain a different core.
\end{enumerate}
Thus, there are a total of $2(9+9+1+9+1+1+1+1) = 64$ minimum cores for $S$. However, we have to be careful as some of these minimum cores have negative sign. 
\end{example}
In order to have the right sign for our tables, we make the following definitions
\begin{definition}[Canonical tables]\label{def:canonical-comparison-table}
Given a shell $S$, we construct a canonical table $canon(S)$ for $S$ as follows.
\begin{enumerate}
\item We assign a label to the elements of $S$ based on the set of rows $R$ they are contained in. If $|R| = 2$ then we assign the label $R$. If $|R| = 4$ then we assign the label $[6] \setminus R$. If $|R| = 6$ then we assign the label $\emptyset$. We then assign an order to the elements so that the labels of the elements are in lexicographic order. Note that while the order may not be unique, the sign of the resulting table is fixed.
\item We sort the rows so that the elements are in ascending order according to the order they were given, not their order in $[n]$. Note that this makes the canonical table invariant under permutations of $[n]$.
\end{enumerate}
Given a core $C$, we construct a canonical table $canon(C)$ using the same procedure.
\end{definition}
\begin{definition}[Signs of shells and cores]\label{def:shellcoresign}
Given a shell $S$, we define $sign(S)$ to be $1$ if it takes an even number of swaps (where each swap takes two elements in one row and swaps them) to transform $S$ into $canon(S)$ and $-1$ otherwise. Similarly, given a core $C$, we define $sign(C)$ to be $1$ if it takes an even number of swaps (where each swap takes two elements in one row and swaps them) to transform $C$ into $canon(C)$ and $-1$ otherwise.
\end{definition}
\begin{example}
If $C$ is the core 
\begin{center}
 \begin{tabular}{|c|c|c|}
 \hline
  $b$ & $d$\\
 \hline
  $b$ & $c$\\
 \hline
  $e$ & $d$\\
 \hline
  $e$ & $a$\\
 \hline
  $f$ & $a$\\
 \hline
  $f$ & $c$\\
 \hline
\end{tabular}
\end{center}
then the order of the elements would be $b,d,c,e,a,f$ as $b$ appears in rows $1,2$, $d$ appears in rows $1,3$, $c$ appears in rows $2,6$, $e$ appears in rows $3,4$, $a$ appears in rows $4,5$, and $f$ appears in rows $5,6$. Thus, $canon(C)$ is 
\begin{center}
 \begin{tabular}{|c|c|c|}
 \hline
  $b$ & $d$\\
 \hline
  $b$ & $c$\\
 \hline
  $d$ & $e$\\
 \hline
  $e$ & $a$\\
 \hline
  $a$ & $f$\\
 \hline
  $c$ & $f$\\
 \hline
\end{tabular}
\end{center}
Since it takes $3$ swaps to go from $C$ to $canon(C)$, $sign(C) = -1$.
\end{example}
\begin{remark}
While $canon(C)$ is not a valid partial table as $a$, $c$, $d$, and $e$ don't appear as pairs, this is okay because we are only using $canon(C)$ as a tool to determine the signs of our tables.  
\end{remark}
\begin{lemma}
If $C$ is a minimum compatible core for $S$ then the table formed by merging $canon(S)$ and $canon(C)$ has positive sign.
\end{lemma}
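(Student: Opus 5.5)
The sign of a table is the product over rows of the signs of the row permutations, so I will reduce the claim to a row-by-row statement. Fix a row $i$. In the table formed by placing $canon(S)$ beside $canon(C)$, row $i$ reads the elements of $S$ present in row $i$ in ascending canonical order, followed by the elements of $C$ present in row $i$ in ascending canonical order. Since $C$ is a minimum compatible core, every element appears exactly once in row $i$ of the merged table. So row $i$ is a permutation $\sigma_i$ of a fixed ordered set of elements, and the sign of the merged table is $\prod_i \sgn\sigma_i$, computed relative to one common reference order. The subtlety is that the orders on elements used by $canon(S)$ and $canon(C)$ come from different labelings. The label of an element in $S$ is determined by the rows $R$ it occupies in $S$. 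In $C$ the same element occupies $[6]\setminus R$, and by the labeling rule (the label of a $|R|=4$ set is its complement, the label of an $|R|=2$ set is itself) it receives the same label in $C$ as in $S$. The one exception is an element that lies entirely in $S$: it has label $\emptyset$ and is absent from $C$.

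The first step is therefore to check that both canonical tables order the common elements identically. An element with $|R|=2$ in $S$ has $|[6]\setminus R|=4$ in $C$, and the complement rule returns the label $[6]\setminus([6]\setminus R)=R$. Likewise an element with $|R|=4$ in $S$ appears in a 2-set of rows in $C$ and is labelled by that 2-set $[6]\setminus R$ in both. Elements with $|R|=6$ in $S$ (uncovered, or with marks counted) are absent from $C$. This gives a single global linear order on all elements, and both $canon(S)$ and $canon(C)$ sort rows with respect to it, after fixing ties consistently. The note in the definition that the sign does not depend on the tie-breaking justifies fixing the ties once.

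The second step is to count, in each row, the inversions of the concatenation "sorted $S$-part, then sorted $C$-part" against the global order. These are exactly the pairs $(x,y)$ with $x$ in row $i$ of $S$, $y$ in row $i$ of $C$, and $y<x$. Multiplying over rows, the sign is $(-1)^{I}$, where $I$ is the number of triples $(i,x,y)$ with $x$ in row $i$ of $S$, $y$ in row $i$ of $C$, and $\mathrm{label}(y)<\mathrm{label}(x)$.

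The main obstacle is showing that $I$ is even. The plan is to group the contributions by ordered pairs of elements $(y,x)$ with $y<x$. Such a pair contributes $|\{i: x\in S_i,\ y\in C_i\}|=|R_x\cap([6]\setminus R_y)|=|R_x|-|R_x\cap R_y|$, where $R_z$ denotes the rows of $z$ in $S$ and marks count as occurrences. We need the sum of these quantities over all $y<x$ to be even. The argument is a parity case analysis. If $|R_x|$ is even (it is $2$, $4$ or $6$), it suffices that $\sum_{y<x}|R_x\cap R_y|$ is even. I expect to prove this per row: in each row of $S$, the elements preceding $x$ in the label order pair up. The reason is the lexicographic ordering by label: elements whose label precedes $\mathrm{label}(x)$ form a union of whole label classes, and each label class is a 2-set $L$ whose elements occupy complementary patterns in $S$ and $C$. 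Concretely, I would show that for each 2-set $L$, summing $|R_x\cap R_y|$ over $y$ with label $L$ gives an even number. I would check this by enumerating the possible $(|R_x|,\text{label})$ configurations, using that every row contains each element exactly once in $S\cup C$. If the direct parity argument stalls, the fallback is to prove the statement for one reference pair $(S,C)$ and then show that any two minimum compatible cores differ by moves (swapping elements within a core row) that change $sign(C)$ and the merged sign together.
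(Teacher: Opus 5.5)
Your setup is correct. Labels agree in $S$ and $C$, the sign of the merged table is $(-1)^I$ with $I=\sum_{y<x}|R_x\setminus R_y|$, and since every $|R_x|\in\{2,4,6\}$ you correctly get $I\equiv\sum_x\sum_{y<x}|R_x\cap R_y|\pmod 2$.

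The gap is the step you then propose: showing that for each fixed $x$ the partial sum $\sum_{y<x}|R_x\cap R_y|$ is even, either per label class or per row. That claim is false. Take the two-mark shell from Appendix~\ref{sec:explicittwomarks} (case 5 for $P_2$), whose rows are $(\times a,b)$, $(\times b,a)$, $(\alpha,a)$, $(\beta,a)$, $(\alpha,b)$, $(\beta,b)$. Here $R_a=\{1,2,3,4\}$, $R_b=\{1,2,5,6\}$, $R_\alpha=\{3,5\}$ and $R_\beta=\{4,6\}$. The labels are $56,34,35,46$, so the order is $b<\alpha<\beta<a$ with no ties. For $x=\alpha$ the only predecessor is $b$, and $|R_\alpha\cap R_b|=1$. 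So the label class $\{3,4\}$ contributes an odd amount, and in row $5$ the single predecessor does not pair up. The per-element contributions to $I$ are $0,1,3,8$; only their total is even. No case analysis over $(|R_x|,\mathrm{label})$ can therefore close the argument.

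Your fallback does not help either. In a minimum compatible core each $x$ occupies exactly the rows $[6]\setminus R_x$, so $canon(C)$, and hence the merged table, is the same for every minimum compatible core of $S$. What must be controlled is the variation over shells, not over cores.

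The missing idea is not to split the double sum by $x$. The sum $\sum_x\sum_{y<x}|R_x\cap R_y|$ runs over all unordered pairs of elements, so it does not depend on the order. It equals $\sum_{i=1}^{6}\binom{n_i}{2}$, where $n_i$ is the number of elements in row $i$ of $S$. Each row of $S$ has one distinct element per column (a marked entry counts as the element it covers), so $n_i$ equals the number of columns $a$ for every $i$. The sum is therefore $6\binom{a}{2}$, which is even. This is exactly the paper's proof. There it is phrased as moving each fragment of $S$ next to its partner in $C$ and observing that every row lies in the same number of fragment supports.
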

\begin{proof}
Note: This proof uses the fragments which are described later on in the implementation.

Consider the fragments of $S$ and $C$. $canon(S)$ orders the fragments of $S$ by the lexicographic order of their supports. Similarly, $canon(C)$ orders the fragments of $C$ (which are the same as the fragments of $S$ but with opposite polarity) by the lexicographic order of their supports.

Observe that we can reach a table with positive sign by swapping the order of the fragments of $S$ and $C$ so that each fragment of $S$ is adjacent to the corresponding fragment of $C$ (i.e., the fragment with the same element and opposite polarity). We can do this by going through the fragments $F$ of $S$ in reverse order and swapping each fragment $F$ with all of the fragments in $C$ which correspond to earlier fragments of $S$.

When we swap the order of two fragments, this gives a factor of $(-1)$ if the row supports of the fragments have an odd number of elements in common (note that the row support is equal to the support for $-$ fragments and the complement of the support for $+$ fragments). Otherwise, this swap leaves the sign unchanged. Thus the net sign for these swaps is the same as sign for swapping each pair of fragments in $S$ an odd number of times. This implies that the overall sign when we merge $canon(C)$ and $canon(S)$ is 
\[
(-1)^{\sum_{i=1}^{6}{\binom{\text{number of times } i \text{ appears in the row support of a fragment } F \text{ of S}}{2}}} \\
= (-1)^{6\binom{\# \text{ of columns of } S - \# \text{ of elements appearing } 6 \text{ times in } S}{2}} = 1.
\]
as for all $i \in [6]$, the number of fragments $F$ of $S$ such that $i$ is in the row support of $S$ is equal to the number of columns of $S$ minus the number of elements which appear $6$ times in $S$.

Note: The following proposition is nice but should appear later on.
\begin{proposition}
For all shells $S$, 
\[
\sum_{\text{fragments } F \text{ with } + \text{ polarity}}{1_{i \in Support(F)}} - \sum_{\text{fragments } F \text{ with } - \text{ polarity}}{1_{i \in Support(F)}}
\]
is the same for all row indices $i \in [6]$.
\end{proposition}
\begin{proof}
Observe that the total number of times an element appears in row $i$ is equal to 
\[
\sum_{\text{fragments } F \text{ with } - \text{ polarity}}{1_{i \in Support(F)}} + \sum_{\text{fragments } F \text{ with } + \text{ polarity}}{(1 - 1_{i \in Support(F)})}
\]
Since this is the same for all $i \in [6]$, 
\[
\sum_{\text{fragments } F \text{ with } + \text{ polarity}}{1_{i \in Support(F)}} - \sum_{\text{fragments } F \text{ with } - \text{ polarity}}{1_{i \in Support(F)}}
\]
must be the same for all row indices $i \in [6]$.
\end{proof}
\end{proof}
\begin{corollary}\label{cor:shell-core-relative-sign}
If $C$ is a minimum compatible core for $S$ then the table formed by merging $S$ and $C$ has sign $sign(S)sign(C)$.
\end{corollary}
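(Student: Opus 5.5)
The corollary follows from the preceding lemma together with the swap-counting definition of the signs in Definition \ref{def:shellcoresign}. Let $T$ denote the table obtained by merging $S$ and $C$, that is, by placing the columns of $S$ followed by the columns of $C$ side by side. Row by row, the columns of $S$ occupy a fixed block of positions and the columns of $C$ occupy the complementary block.

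First, recall that the sign of a table is the product of the signs of its row permutations. Any swap of two entries within a single row multiplies that row's sign, and hence the table's sign, by $-1$. This holds regardless of which elements are marked, since marks do not affect the sign.

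Next, fix a sequence of $s$ within-row swaps that transforms $S$ into $canon(S)$, so that $(-1)^s = sign(S)$. Fix also a sequence of $c$ within-row swaps transforming $C$ into $canon(C)$, so that $(-1)^c = sign(C)$. Each swap on $S$ acts only on the $S$-block of a row of $T$, and each swap on $C$ acts only on the $C$-block. Hence both sequences lift to swaps on $T$, and applying them all transforms $T$ into the merged table of $canon(S)$ and $canon(C)$.

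Here we must check one point: the entries of $T$ are actual row permutations of $[n]$, while $canon(S)$ and $canon(C)$ are orderings under an auxiliary total order of the elements. So "the merge of $canon(S)$ and $canon(C)$" has to be given a sign in the same convention the lemma uses. The sign of a row is computed relative to the identity in the chosen element order. The lemma's proof uses exactly this convention, via adjacent fragments.

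Finally, changing the reference order of the elements multiplies every row sign by the same permutation sign. Since there are six rows, the total sign is multiplied by an even power and is unchanged. Thus the convention is consistent, and $\operatorname{sgn} T \cdot (-1)^{s+c} = \operatorname{sgn}(\text{merge of } canon(S), canon(C)) = 1$ by the lemma. This gives $\operatorname{sgn} T = sign(S)\,sign(C)$.

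The only real obstacle is this well-definedness issue. It is handled by the parity argument over six rows: a relabeling or reordering applied uniformly to all six rows contributes a factor $(\pm1)^6 = 1$. The same argument shows that $sign(S)$ and $sign(C)$ do not depend on the choice of swap sequence or on ties in the canonical ordering, as noted in Definition \ref{def:canonical-comparison-table}.
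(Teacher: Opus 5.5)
Your argument is correct and is the deduction the paper leaves implicit after the lemma: lift the swap sequences $S\to canon(S)$ and $C\to canon(C)$ to within-row swaps of the merged table, apply the lemma to the merged canonical table, and use the six-row parity argument to show that the sign of a merged table (each row of which contains every element exactly once) does not depend on the reference order. One small slip is in your closing aside: the uniform-relabeling argument does not show that $sign(S)$ alone is independent of tie-breaking, because different rows of $S$ contain different element sets; that independence comes instead from tied elements having equal or complementary row supports, so every reordered pair shares an even number of rows, and in any case it is asserted in the definition and not needed for the corollary.
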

\begin{definition}\label{def:netweight}
Given a shell $S$, we define $Weight_{Cores}(S) = \sum_{C \in MinCores(S)}{sign(C)}$.
\end{definition}
\begin{example}
If $S$ is the shell
\begin{center}
\begin{tabular}{|c|}
 \hline
  $\times a$\\
 \hline
  $\times a$\\
 \hline
  $b$\\
 \hline
  $b$\\
 \hline
  $c$\\
 \hline
  $c$\\
 \hline
\end{tabular}
\end{center}
then for each of the minimum compatible cores $C$ for $S$, $canon(C)$ is 
\begin{center}
\begin{tabular}{|c|c|}
 \hline
  $b$ & $c$\\
 \hline
  $b$ & $c$\\
 \hline
  $a$ & $c$\\
 \hline
  $a$ & $c$\\
 \hline
  $a$ & $b$\\
 \hline
  $a$ & $b$\\
 \hline
\end{tabular}
\end{center}
To see this, observe that the order of the elements is $a,b,c$ as $a$ is missing from rows $1,2$, $b$ is missing from rows $3,4$, and $c$ is missing from rows $5,6$.

For the tables shown in Example \ref{ex:mincoresfirstexample}, the first four tables give minimum compatible cores with positive sign while the last four tables give minimum compatible cores with negative sign. Thus, 
$Weight_{Cores}(S) = 2(9+9+1+9-1-1-1-1) = 48$.
\end{example}
\subsection{The generating function $F_6(t)$}
We are now ready to state and prove our formula for computing $F_6(t)$.
\begin{definition}\label{def:shell-excess}
For each shell $S$, we define $k_S$ to be the number of elements in $S$ minus the number of columns of $S$.
\end{definition}
\begin{definition}
We define $\mathcal{S}$ to be the set of possible shells up to permutations of the rows, columns, and elements. 
\end{definition}
\begin{theorem}[Formula for $F_6(t)$]\label{thm:Ftformula}
Recall that $N(t) = \frac{1}{48}\sum_{n=0}^\infty (n+1)(n+2)(n+4)! \,t^n$ (see \ref{Eq:N}) and $O(t) = (1+\mu_3^2t)^{10}\,\frac{e^{t (\mu_6-15\mu_4-10\mu_3^2 + 30)}}{\left(1+3t-\mu_4t\right)^{15}} N\left(\frac{t}{\left(1+3t-\mu_4t\right)^{3}}\right)$ (see \ref{Eq:O}). Letting $t' = \frac{t}{(1 - (\mu_4 - 3)t)^3}$,
\[
F_6(t) = \sum_{S \in \mathcal{S}}{sign(S)\frac{6!}{|Aut(S)|} \cdot \frac{O(t)}{N(t')} \cdot \left(\prod_{\text{columns } \mathbf{c} \text{ of } S}G(\mathbf{c})\right) \cdot \frac{{t'}^{k_S}N^{(k_S)}(t')Weight_{Cores}(S)}{\prod_{j=1}^{k_S}{j(j+2)(j+4)}}}
\]
\end{theorem}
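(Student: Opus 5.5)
We prove the formula by showing that the map $T \mapsto (\shellT, \text{expanded shell data}, \text{expanded core}, \text{floating component})$ is a weight- and sign-preserving bijection onto the appropriate product of labelled species. The EGF of tables with a fixed shell isomorphism class then factorises as the product of the EGFs of the pieces. We start from equation \eqref{eq:InclExcl}, so $F_6(t)$ is the signed, weighted EGF of paired/blocked tables. The reduction $T\mapsto T_{red}$ is deterministic. We partition all tables according to the isomorphism class of $\shellT$ under $Sym(Rows)\times Sym(Columns)\times Sym(Elements)$. For a fixed $S\in\mathcal S$, the $6!/|Aut(S)|$ row relabellings give distinct shells. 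Relabelling columns and elements is absorbed by the usual EGF convention of labelled products: we divide by $n!^2$, and columns and elements are distributed among the pieces via binomial coefficients. This yields the prefactor.

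We then treat the three components in turn.

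\emph{Floating component.} It consists of $\underline 6$-columns, cycles of known $4$-columns and cycles of $\underline 3$-pairs. It is disjoint from the rest, so by Section~\ref{sec:disjointcomponentfactorisation} it contributes the factor $Fl(t)=O(t)/N(t')$.

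\emph{Expanded shell.} We invert step 1 and part of step 2 using the reversal data described above. For each shell column we sum over $RS\in Splits(\mathbf c)$. Each $4$-$2$ split is undone by a nonempty path of known $4$-columns, giving the geometric series $\sum_{\ell\ge1}((\mu_4-3)t)^\ell$. Each $3$-$3$ split is undone by a nonempty path of $\underline 3$-pairs, each link carrying weight $\mu_3^2 t$ with the sign $-1$ coming from the row transposition parity of a triple swap, giving $\sum_{\ell\ge1}(-\mu_3^2t)^\ell$. Each unmarked pair may carry a chain of known $4$-columns, giving $1/(1-(\mu_4-3)t)$. Condition~4 of Definition~\ref{def:rowsplits} is exactly the condition under which the nested splits are reconstructible uniquely. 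Column weights come from the table of $\bunderline T_6^\times$ weights. Together this gives $\prod G(\mathbf c)$, as computed in Lemma~\ref{lem:columnfactors}. We must also check that the inserted element and column labels produce the $t^{\#\text{columns}}/n!^2$ bookkeeping consistently, which holds because each merge removes one element and one column.

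\emph{Expanded core.} This is the step I expect to be the main obstacle. The core has only pairs, and it contains the $k_S$ elements of $S$ that are not completed in $S$, together with new elements. Cores with chains of known $4$-columns attached to pairs are handled as in \cite{BLP23}: attaching chains substitutes $t\mapsto t'$ and contributes the power of $(1-(\mu_4-3)t)$. The claim to prove is the following. The signed EGF of pair-only cores compatible with $S$ equals
\[
Weight_{Cores}(S)\,\frac{t'^{k_S}N^{(k_S)}(t')}{\prod_{j=1}^{k_S} j(j+2)(j+4)}.
\]
Here $N$ is the Gaussian pair-only EGF. My plan is to argue that any compatible core decomposes uniquely into a minimum compatible core on the $k_S$ shell elements "threaded" into a Gaussian core on fresh elements. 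Equivalently, I would mark $k_S$ elements of a Gaussian table and differentiate $k_S$ times, and show that the number of ways to complete a minimal core is independent of which minimal core is chosen. That constant is the ratio $\prod_j j(j+2)(j+4)$, which is read off from the coefficient recursion $(n+1)(n+2)(n+4)!$ of $N$: the ratio of consecutive coefficients is $(n+1)(n+3)(n+5)$-type. I would verify this constant first for $k_S=1$ against the recursion for $N$, then induct on $k_S$. Signs follow from Corollary~\ref{cor:shell-core-relative-sign}, which gives $\sgn T=sign(S)sign(C)$, so summing $sign(C)$ produces $Weight_{Cores}(S)$. Finally, we sum over $S\in\mathcal S$.
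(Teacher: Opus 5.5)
Your treatment of the floating component, the expanded shell (geometric series for $4$-$2$ splits, a factor $-\mu_3^2t$ per link for $3$-$3$ splits, chains on unmarked pairs) and the signs via Corollary~\ref{cor:shell-core-relative-sign} agrees with the paper. The gap is where you expected it, in the core factor, and the mechanism you propose does not work.

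\emph{No unique decomposition.} A compatible core does not decompose uniquely into a minimal compatible core ``threaded'' into a Gaussian core on fresh elements. Fresh elements share columns with shell elements. Suppose the six copies of a fresh element sit as three pairs in three different columns. Deleting them leaves twelve entries, two per row. These can be re-packed into two paired columns in several ways with different signs, since swapping the two entries of a row flips the sign. So there is no well-defined minimal core underneath a given core. Hence there is also no fibre whose size you could show to be independent of the minimal core: the identity only holds after signed cancellation.

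\emph{The other steps presuppose the result.} Reading the constant off the coefficients of $N$ assumes what must be proved, namely that $S$-compatible cores obey the Gaussian recursion. Induction on $k_S$ has no mechanism, because changing $k_S$ means changing the shell.

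\emph{The missing idea: a signed recursion in $y'$.} Fix $S$ and recurse on the number of core columns. Let $W_S(y')$ be the signed weight of compatible cores with $y'$ columns, fresh elements labelled by relative order. The paper proves $W_S(y')=y'(y'+2)(y'+4)\,W_S(y'-1)$ for $y'>k_S$ by deleting the largest fresh element $n'$ and splitting on where its six copies lie:
\begin{enumerate}
\item all six in one column contribute $15y'\,W_S(y'-1)$;
\item a $4+2$ split contributes $9y'(y'-1)\,W_S(y'-1)$: a length-one chain attached to one of the $3(y'-1)$ pairs, $y'$ positions, $3$ pairings;
\item three pairs in three columns need the signed identity of Lemma~A.2 of \cite{BLP23}. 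For fixed remaining entries, the signed number of three-column packings including the pairs of $n'$ is $6$ times the signed number of two-column packings. With the placement counts $\binom{y'}{3}$ versus $\binom{y'-1}{2}$, this gives $y'(y'-1)(y'-2)\,W_S(y'-1)$.
\end{enumerate}
Summing gives $W_S(y')=\prod_{j=k_S+1}^{y'}j(j+2)(j+4)\cdot Weight_{Cores}(S)$. This equals $y'!\,(y'-k_S)!$ times the $t^{y'}$-coefficient of $Weight_{Cores}(S)\,t^{k_S}N^{(k_S)}(t)/\prod_{j=1}^{k_S}j(j+2)(j+4)$, and the chains then give $t\mapsto t'$. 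Without this three-column signed identity, or a substitute, your core factor is unproved.

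\emph{Two smaller corrections.}
\begin{itemize}
\item $k_S$ is not the number of incomplete shell elements. The one-column shell $\times a,\times a,b,b,c,c$ has three incomplete elements but $k_S=2$. Rather, $k_S$ is the number of columns of a minimal core, so a $y'$-column core has $y'-k_S$ fresh elements.
\item $6!/|Aut(S)|$ is not the number of distinct row relabellings. The orbit has $6!\,a!\,(a+k_S)!/|Aut(S)|$ members. Because shell and core share elements, the bookkeeping is not a plain labelled product. The paper checks explicitly that $\frac{x!(x-a)!}{a!}$, $y!(y-k_S)!$, $(z!)^2$ and the element and column choices combine to $(n!)^2$.
\end{itemize}
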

\begin{remark}\label{rem:shell-core-row-parity}
Note that $sign(S)$ may be affected by permutations of the rows. The reason that this is not an issue is that if $sign(S)$ is flipped by a permutation of the rows, for each minimal compatible core $C$ for $S$, $sign(C)$ will also be flipped by the permutation of the rows.
\end{remark}
\begin{proof}
We show this by handling the expanded shell, the expanded core, and the floating cycles of known $4$-columns and/or columns with two unmarked triples separately. In particular, we show the following:
\begin{enumerate}
\item Lemma \ref{lem:shellgeneratingfunction} shows that if a shell $S$ has $a$ columns and $a+k_S$ elements then taking $F_{S}(t) = sign(S)\prod_{\text{columns } \mathbf{c} \text{ of } S}G(\mathbf{c})$, if we write $F_{S}(t) = \sum_{x=0}^{\infty}{{s_x}t^x}$ then the total weight of the partial tables $S'$ with $x$ columns such that 
\begin{enumerate}
\item[1.] The shell of $S'$ is $S$, the core of $S'$ is empty, and $S'$ does not have a floating component. In other words, $S'$ is a possible expanded shell for $S$.
\item[2.] The $x-a$ elements of $S'$ which are not in $S$ are labeled only by their relative order. In particular, instead of giving the final labels of the elements which are in $S'$ but not $S$, we only specify how these elements compare with each other and we do not specify how these elements compare to the elements in $S$.
\end{enumerate}
is $\frac{x!(x-a)!}{a!}{s_x}$.
\item Corollary \ref{cor:coregeneratingfunction} shows that for each shell $S$, if we take $C_{S}(t) = \frac{{t'}^{k_S}N^{(k_S)}(t')Weight_{Cores}(S)}{\prod_{j=1}^{k_S}{j(j+2)(j+4)}}$ and write $C_{S}(t) = \sum_{y=0}^{\infty}{{c_y}t^y}$ then the total weight of the partial tables $C'$ with $y$ columns such that
\begin{enumerate}
\item $C'$ is compatible with $S$, $shell(C')$ is empty, and $C'$ does not have a floating component. In other words, $C'$ is a possible expanded core for a table $T$ such that $\shellT = S$.
\item The $y-k_S$ elements of $C'$ which are not in $S$ are labeled only by their relative order.
\end{enumerate}
is $y!(y-k_S)!{c_y}$.
\item As noted in Section \ref{sec:disjointcomponentfactorisation}, if we take $Fl(t) = \frac{O(t)}{N(t')}$ and write $Fl(t) = \sum_{z=0}^{\infty}{fl_{z}t^z}$ then the total weight of tables with $z$ columns consisting only of known $\underline{6}$-columns, cycles of known
$\underline{4}$-columns, and cycles of columns with two unmarked triples where the elements are labeled only by their relative order is $(z!)^2{fl_z}$.
\end{enumerate}
Let $n = x+y+z$. We consider a shell $S_0 \in \mathcal{S}$ and compute the total weight of the tables $T$ such that $S = \shellT$ is isomorphic to $S_0$, the expanded shell of $T$ has $x$ columns, the expanded core of $T$ has $y$ columns, and the floating component of $T$ has $z$ columns. We can choose such a table $T$ as follows:
\begin{enumerate}
\item Letting $a$ be the number of columns of $S_0$, there are $\frac{6!(k_S+a)!a!}{Aut(S_0)}$ different shells $S$ which are isomorphic to $S_0$ (where elements are labeled by their relative order) as there are $6!$ permutations of the rows, $(k_S+a)!$ permutations of the elements, and $a!$ permutations of the columns but we need to divide by $Aut(S_0)$ to avoid double counting.
\item We choose an expanded shell $S'$ with $x$ columns, an expanded core $C'$ compatible with $S$, and a floating component with $z$ columns satisfying the conditions described above. The total weight of the choices for $S'$ is $\frac{x!(x-a)!}{a!}{s_x}$, the total weight of the choices for $C'$ is $y!(y-k_S)!{c_y}$, and the total weight of the choices for the floating component is $(z!)^2{fl_z}$.
\item There are $\binom{n}{k_S+a} = \frac{n!}{(k_S+a)!(n-(k_S+a))!}$ ways to choose the actual elements of the shell $S$ (since we have specified the relative order of the elements in $S$ and their positions, specifying the $k_S+a$ elements which appear in $S$ uniquely determines $S$).
\item There are $\frac{(n-(k_S+a))!}{(x_S-a)!(y-k_S)!z!}$ ways to order the elements which are not in $S$ as we have already chosen the relative order for the $x-a$ elements in $S'$ which are not in $S$, the relative order for the $y-k_S$ elements of $C'$ which are not in $S$, and the relative order for the $z$ elements in the floating component.
\item There are $\frac{n!}{x!y!z!}$ ways to complete the description of $T$ by choosing which $x$ columns of $T$ are in $S'$ and which $y$ columns of $T$ are in $S'$ (the remaining $z$ columns are the floating component).
\end{enumerate}
Combining these factors, the total weight of all such tables $T$ is 
\begin{align*}
&\frac{6!(k_S+a)!a!}{Aut(S_0)} \cdot \frac{x!(x-a)!}{a!}{s_x} \cdot y!(y-k_S)!{c_y} \cdot (z!)^2{fl_z} \\
&\cdot \frac{n!}{(k_S+a)!(n-(k_S+a))!} \cdot \frac{(n-(k_S+a))!}{(x-a)!(y-k_S)!z!} \cdot \frac{n!}{x!y!z!}\\
&=\frac{6!}{Aut(S_0)}(n!)^2{s_x}{c_y}{fl_z}.
\end{align*}
Thus, for each $S \in \mathcal{S}$, the contribution to $F_6(t)$ from tables $T$ whose shell is isomorphic to $S$ is 
$\frac{6!}{Aut(S)}\sum_{x=0}^{\infty}{\sum_{y=0}^{\infty}{\sum_{z=0}^{\infty}{{s_X}{c_y}{fl_z}t^{x+y+z}}}} = \frac{6!}{Aut(S)}F_S(t)C_S(t)Fl(t)$. The result follows by summing over all $S \in \mathcal{S}$.
\end{proof}
\subsection{Generating functions for the expanded shells and cores}
We now complete the proof of Theorem \ref{thm:Ftformula} by computing the exponential generating functions corresponding to the expanded shell and the expanded core.
\begin{lemma}\label{lem:shellgeneratingfunction}
If a shell $S$ has $a$ columns and $a+k$ elements then taking 
\[
F_{S}(t) = sign(S)\prod_{\text{columns } \mathbf{c} \text{ of } \shellT}G(\mathbf{c}),
\]
if we write $F_{S}(t) = \sum_{x=0}^{\infty}{{s_x}t^x}$, the total weight of the expanded tables $S'$ with $x$ columns such that 
\begin{enumerate}
\item[1.] The shell of $S'$ is $S$, the core of $S'$ is empty, and $S'$ does not have a floating component. In other words, $S'$ is a possible expanded shell for $S$.
\item[2.] The $x-a$ elements of $S'$ which are not in $\shellT$ are labeled by their relative order
\end{enumerate}
is $\frac{x!(x-a)!}{a!}{s_x}$.
\end{lemma}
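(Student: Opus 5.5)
Since $F_S(t)$ is a product over the columns of $S$, I would count expanded shells column by column and recombine with labelled-product (EGF) arithmetic. For each column $\mathbf{c}$ of $S$, I first describe the set of ``expanded column groups'' that collapse to $\mathbf{c}$ under the reduction $T\mapsto T_{red}$. Such a group is determined by four pieces of data. The first is a set of row splits $RS\in Splits(\mathbf{c})$ recording how the first reduction step merged pieces of $\mathbf{c}$. The second is, for each $4$-$2$ split, a nonempty path of known $4$-columns of uncovered elements; for the $3$-$3$ split, a nonempty path of columns carrying two unmarked triples. The third is, for each unmarked pair of $\mathbf{c}$, a possibly empty chain of known $4$-columns whose surviving pair is that pair, as in the path representation of the Remark on paths of known $4$-columns. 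The fourth is the positions of all these extra columns. Condition 4 of Definition \ref{def:rowsplits} (nestedness) is exactly what makes the merge history reconstructible from $RS$, so the map from expanded groups to this data is a bijection. I would prove this first by checking that the reduction steps are reversible given the data listed in the subsection on reversing the decomposition.

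Next I compute the weight of one path. A path of length $\ell\ge1$ of known $4$-columns contributes $\ell$ new columns, $\ell$ new elements and weight $(\mu_4-3)^\ell$. With elements labelled only by relative order and column positions chosen freely, its EGF in the normalization $\frac{x!(x-a)!}{a!}s_x$ is $\sum_{\ell\ge1}((\mu_4-3)t)^\ell=\frac{(\mu_4-3)t}{1-(\mu_4-3)t}$. The new elements of a path are linearly ordered by the path, which cancels one factorial, and the choice of column slots cancels the other. Chains attached to pairs give the same sum but starting at $\ell=0$, hence $\frac{1}{1-(\mu_4-3)t}$.

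A path of triple-pair columns has weight $\mu_3^{2\ell}$. It carries the sign $(-1)^\ell$ coming from the row reorderings created by inserting each intermediate column (a $3$-cycle transposition pattern across rows $R_1,R_2$), which yields $\frac{-\mu_3^2t}{1+\mu_3^2t}$. I expect this sign bookkeeping to be the main obstacle. I would verify it by comparing the merged and unmerged tables against $canon(\cdot)$ via Definition \ref{def:shellcoresign}, showing that each inserted column multiplies $sign$ by $-1$ for $3$-$3$ splits and by $+1$ for $4$-$2$ splits and chains. The residual sign is then $sign(S)$, as in Corollary \ref{cor:shell-core-relative-sign}.

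Finally, I combine the columns. The $a$ shell columns themselves contribute $t^a$ and the weights $m_1^{\#\times}$, $\mu_4-3$ and $\mu_3$ read off from the table of paired/blocked columns. The ${\underline{\!\times\!}\,}_5^1$ column admits no expansion, by the argument of Section \ref{sec:connectionformula}. The expanded shell is the disjoint union of the expanded groups, with the $x-a$ new elements interleaved and the $x$ columns placed so that the relative order of the $a$ original columns is fixed. Counting these placements gives $\frac{x!}{a!}$ for columns and $(x-a)!$ times multinomials for the elements, and the multinomials cancel against the product of the per-group EGF coefficients. Hence the total weight at $x$ columns is $\frac{x!(x-a)!}{a!}[t^x]\,sign(S)\prod_{\mathbf{c}}G(\mathbf{c})$, as claimed. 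As a sanity check, I would confirm the formulas of Lemma \ref{lem:columnfactors} on the single-column shells.
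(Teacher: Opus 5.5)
Your proposal is essentially the paper's proof: it uses the same factors $\frac{(\mu_4-3)t}{1-(\mu_4-3)t}$ per $4$-$2$ split, $\frac{-\mu_3^2 t}{1+\mu_3^2 t}$ per $3$-$3$ split and $\frac{1}{1-(\mu_4-3)t}$ per unmarked pair, and the prefactor $\frac{x!(x-a)!}{a!}$ comes from placing the new columns and ordering the new elements; the paper gets that prefactor more directly by sequential insertion as $\prod_{j=a+1}^{x} j(j-a)$, and since every new column and element has a distinct role your multinomial bookkeeping is unnecessary. One fix to your justification: the sign $-1$ per step of a $3$-$3$ path comes from three row transpositions, one in each row of one half of the split (the paper's ``3 swaps''), not from a $3$-cycle, which is an even permutation and would give $+1$.
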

\begin{proof}
The key idea is that whenever we add a column due to a $4$-$2$, split, a $3$-$3$ split, or attaching a chain to an unmarked pair, if we already have $x'$ columns then there are $x'+1$ places where the new column can fit in and there are $x'-a+1$ choices for where the new element is inserted in the relative order of the added elements. This gives a factor of $\left(\prod_{j=a+1}^{x}{j(j-a)}\right) = \frac{x!(x-a)!}{a!}$.

We now make the following observations:
\begin{enumerate}
\item For each $4$-$2$ row split, we add $l$ columns/elements for some $l \geq 1$. If we add $l$ elements and columns then this gives a factor of $((\mu_4 - 3)t)^{l}$. Summing over the possible $l$ gives a factor of $\sum_{l=1}^{\infty}{((\mu_4 - 3)t)^{l}} = \frac{(\mu_4 - 3)t}{1 - (\mu_4 - 3)t}$.
\item For each $3$-$3$ row split, we add $l$ columns/elements for some $l \geq 1$. If we add $l$ elements and columns then this gives a factor of $(-{\mu_3^2}t)^{l}$ (we have a minus sign as it takes 3 swaps to undo each step of the split). Summing over the possible $l$ gives a factor of $\sum_{l=1}^{\infty}{(-{\mu_3^2}t)^{l}} = \frac{-{\mu_3^2}t}{1 + {\mu_3^2}t}$.
\item For each unmarked pair of elements, we attach a chain of known $4$-columns of length $l$ for some $l \geq 0$. If we attach a chain of length $l$ then this gives a factor of $((\mu_4 - 3)t)^{l}$. Summing over the possible $l$ gives a factor of $\sum_{l=0}^{\infty}{((\mu_4 - 3)t)^{l}} = \frac{1}{1 - (\mu_4 - 3)t}$.
\end{enumerate}
The result follow by combining these factors with $sign(S)$ and the existing factors of $t$, $m_1$, $\mu_3$, $(\mu_4-3)$ from the columns, marks, triples, and blocks of size $4$ in $S$.
\end{proof}
\begin{lemma}\label{lem:coretotalweight}
For each shell $S$, for all $y' \geq k_S$, the total weight of the cores $C$ with $y'$ columns such that 
\begin{enumerate}
\item $C$ is compatible with $S$.
\item The $y'-k_S$ elements of $C$ which are not in $S$ are labeled only by their relative order.
\end{enumerate}
is $\left(\prod_{j=k+1}^{y'}{j(j+2)(j+4)}\right)Weight_{Cores}(S)$
\end{lemma}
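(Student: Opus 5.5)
The plan is to argue by induction on $y'$, with $y' = k_S$ as the base case. At the base, every element of a compatible core $C$ lies in $S$, so $C$ is a minimal compatible core. The signed count is then $Weight_{Cores}(S)$ by Definition \ref{def:netweight}, and the product $\prod_{j=k+1}^{k}$ is empty. Here the weight of a core is the sign $sign(C)$ from Definition \ref{def:shellcoresign}; since cores contain only pairs, there are no moment factors. For the inductive step I will show that the signed weight at $y'$ equals $y'(y'+2)(y'+4)$ times the signed weight at $y'-1$. This reproduces the recursion of the Gaussian case in \cite{BLP23}. As a sanity check, when $S$ is empty ($k_S=0$, weight $1$) the product $\prod_{j=1}^{n} j(j+2)(j+4)$ should recover $n!\cdot f_6(n)|_{\text{Gaussian}}/n!^2$-type coefficients of $N(t)$, consistent with Corollary \ref{cor:coregeneratingfunction}.

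For the inductive step I will use a removal map on the largest non-shell element. Fix a compatible core $C$ with $y'$ columns, and let $e$ be the element of $C$ that is not in $S$ and is largest in the relative order (it exists since $y'>k_S$). Then $e$ occurs six times in $C$, once in each row, grouped into pairs. I remove $e$ and contract, exactly as a known $4$-column is merged in the reduction step. If two pairs of $e$ sit in one column (an unknown column with its specified pairing), the surviving pair or pairs are reattached accordingly. If the pairs of $e$ sit in three different columns, I delete the column of $e$ in a consistent position, say the one containing $e$ in the lowest-indexed row. I then reconnect the partner entries of that column into the columns left by the other two pairs of $e$, so that every row again has each element exactly once.

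The result is a compatible core $\hat C$ with $y'-1$ columns and one fewer non-shell element. Compatibility is preserved because $S$ is untouched and only pairs are rearranged. The relative-order labeling of the remaining new elements is inherited, and since $e$ was the largest, no information about its label needs recording. Conversely, $C$ is recovered from $\hat C$ by two choices. The first is one of the $y'$ positions for the reinserted column. The second is an \emph{insertion pattern}: the choice of which pairs of $\hat C$ are split open to accommodate the three pairs of $e$, including the degenerate patterns that create unknown columns together with their pairings. I will also track how each insertion changes the sign: comparing with $canon$ of the enlarged core, the number of row-swaps changes by a quantity that depends only on the local pattern, just as each step of a $3$-$3$ split contributed $-1$ in Lemma \ref{lem:shellgeneratingfunction}.

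The main obstacle is to show that the signed number of insertion patterns is exactly $(y'+2)(y'+4)$ for every $\hat C$, independent of its structure and of $S$. The cancellations among patterns that produce unknown columns, or that interact with the same column twice, must work uniformly. I would first try to establish this purely locally. The insertion only sees the six rows and the multiset of pairs of $\hat C$, and there are $3y'-3$ such pairs regardless of whether they involve shell elements. The local signed count should therefore be a universal function of $y'$. I would compute this function by evaluating it on the shell-free Gaussian case, where \cite{BLP23} gives $N(t)=\frac{1}{48}\sum (n+1)(n+2)(n+4)!\,t^n$, and check that this forces $(y'+2)(y'+4)$. If the locality argument breaks down, the fallback is a sign-reversing involution that pairs off the non-generic insertion patterns and leaves a generic family of size $(y'+2)(y'+4)$. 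Granting this, multiplying by the $y'$ column positions closes the induction.
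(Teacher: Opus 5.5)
Your base case is right. So are the two configurations of the largest new element $e$ where your contraction is harmless. Six copies of $e$ in one column give $15y'W_S(y'-1)$. Four copies in one column plus a pair elsewhere give $9y'(y'-1)W_S(y'-1)$: a length-one chain attached to one of the $3(y'-1)$ pairs, $3$ pairings, $y'$ positions, and the sign is unchanged. Everything therefore hinges on showing that cores with the three pairs of $e$ in three different columns contribute $y'(y'-1)(y'-2)W_S(y'-1)$. This is where your argument has a genuine gap, and the fix you suggest does not work.

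\textbf{Where the contraction breaks.} Let $e$ occupy rows $R_1,R_2,R_3$ of columns $c_1,c_2,c_3$, and delete $c_1$. The other entries of $c_1$ on $R_2\cup R_3$ are paired in one of three ways. Only the pairing $\{R_2,R_3\}$ yields pairs after those entries move into $c_2$ and $c_3$; the two crossing pairings leave columns that are not unions of pairs. For example, take $S=\emptyset$, $y'=3$, $e=3$, and the core whose rows read $(3,2,1),(3,1,2),(1,3,2),(2,3,1),(1,2,3),(2,1,3)$. It contracts to two columns, each consisting of two triples, which is not a core.

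\textbf{Where uniformity fails.} Even over genuine cores, the fibres of your map do not have constant size. Use your convention of deleting the column with $e$ in row $1$. The two-column core made of six $1$'s paired as $\{12,34,56\}$ and six $2$'s paired as $\{12,35,46\}$ admits no reinsertion pattern. The core with both columns paired as $\{12,34,56\}$ admits two. So the signed number of insertion patterns is not $(y'+2)(y'+4)$ for every $\hat C$; the crossing cores, which have no image, carry compensating signed weight. Calibrating a supposedly universal local count against the Gaussian $N(t)$ is therefore circular. That recursion only controls a total over all $\hat C$ when $S=\emptyset$. It cannot give independence of $\hat C$, which is false here, or independence of $S$.

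\textbf{The missing idea.} The paper groups these cores not by a contracted core but by the $y'-3$ residual columns $T$ that do not contain $e$. Once $T$ is fixed, the twelve missing non-$e$ entries are fixed row demands. Lemma A.2 of \cite{BLP23} says the signed number of ways to arrange them into three columns, each containing a pair of $e$, is $6$ times the signed number of ways to arrange them into two paired columns. The three columns can be placed in $\binom{y'}{3}$ ways, and each compatible core with $y'-1$ columns arises from exactly $\binom{y'-1}{2}$ choices of $T$. Together these give $6\binom{y'}{3}W_S(y'-1)=y'(y'-1)(y'-2)W_S(y'-1)$. Adding $15y'+9y'(y'-1)$ yields $y'(y'+2)(y'+4)$. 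The identity holds for arbitrary twelve demands, which may involve shell elements, and that is what makes the recursion independent of $S$. To finish, you must invoke this identity or prove it, for instance by a sign-reversing involution organised around the fixed residual columns $T$ rather than around $\hat C$.
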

\begin{proof}
Let $W_S(y')$ be the total weight of the cores $C$ with $y'$ columns such $C$ is compatible with $S$ and the $y'-k_S$ elements of $C$ which are not in $S$ are labeled only by their relative order. To prove this result, it is sufficient to show that for all $y' > k_S$, $W_S(y') = y'(y'+2)(y'+4)W_S(y'-1)$. We prove this using the techniques in Appendix A of \cite{BLP23}.

Let $n' = y'-k_S$ and consider the cores $C$ with $y'$ columns which are compatible with $S$ such that the elements of $C$ which are not in $S$ are $1,2,\ldots,n'$. Observe that 
\begin{enumerate}
\item The total weight of the cores $C$ such that all six $n'$ are in the same column is $15{y'}W_S(y'-1)$ as each such core $C$ can be obtained by starting from a core $C_0$ with $y'-1$ columns which is compatible with $S$, choosing one of $y'$ positions to insert the column with six $n'$, and then choosing one of $15$ different pairings for the six $n'$.
\item The total weight of the cores $C$ such that four $n'$ are in one column and the remaining two $n'$ are in a different column is $9n(n-1)W_S(y'-1)$. To see this, observe that each such core can be obtained by starting from a core $C_0$ with $y'-1$ columns which is compatible with $S$, choosing one of the $3(y'-1)$ pairs of elements $x$ in $C_0$, attaching a chain $n \to x$ of length $1$ to this pair of elements, choosing one of $y'$ possible locations for the column with four $n'$, and then choosing one of $3$ possible pairings for this column.
\end{enumerate}
Analyzing the total weight of the cores $C$ such that the six $n'$ are in three different columns is trickier. One way to do this is to consider the following procedure for choosing such a core $C$ and compare it with the following procedure for choosing a core $C_2$ with $y'-1$ columns which is compatible with $S$.

Procedure for choosing $C$:
\begin{enumerate}
\item Start with a partial table $T$ which has all of the elements of $C$ except for the six $n'$ and twelve other elements.
\item Choose how to arrange these elements into three columns such that each column contains a pair of $n'$ and the other elements are paired up as well.
\item Insert these $3$ columns into $T$ by choosing which $3$ of the $y'$ columns will be these columns. There are $\binom{y'}{3}$ choices for this.
\end{enumerate}

Procedure for choosing $C_2$:
\begin{enumerate}
\item Start with a partial table $T$ which has all but twelve of the elements of $C_2$:
\item Choose how to arrange these twelve elements into two columns so that these elements are paired up.
\item Choose two of the $y'-1$ positions for these columns. There are $\binom{y'-1}{2}$ choices for this.
\end{enumerate}

We make the following observations about these procedures:
\begin{enumerate}
\item By Lemma A.2 of \cite{BLP23}, after taking signs into account, the total weight of the choices for the second step for choosing $C$ is $6$ times the total weight of the choices for the second step for choosing $C_2$.
\item For each such core $C$, there is a unique way to choose $C$ via this procedure as $T$ must be the $y'-3$ columns of $C$ which don't contain $n'$. On the other hand, for each such core $C_2$, there are $\binom{y'-1}{2}$ different ways of choosing $C_2$ as we can take $T$ to be any $y'-3$ of the $y'-1$ columns of $C$.
\end{enumerate}
Putting these observations together, the total weight of the columns $C$ where the six $n'$ appear in $3$ different columns is $6\binom{y'}{3} = y'(y'-1)(y'-2)W_S(y'-1)$ as the total weight of the columns $C_2$ with $y'-1$ columns which are compatible with $S$ is $W_S(y'-1)$.

Adding these contributions together, we have that 
\[
W_S(y') = \left(15y' + 9y'(y'-1) + y'(y'-1)(y'-2)\right)W_S(y'-1) = y'(y'+2)(y'+4)W_S(y'-1),
\]
as needed.
\end{proof}
\begin{corollary}\label{cor:coregeneratingfunction}
for each shell $S$, if we take $C_{S}(t) = \frac{{t'}^{k_S}N^{(k_S)}(t')Weight_{Cores}(S)}{\prod_{j=1}^{k_S}{j(j+2)(j+4)}}$ and write $C_{S}(t) = \sum_{y=0}^{\infty}{{c_y}t^y}$ then the total weight of the partial tables $C'$ with $y$ columns such that
\begin{enumerate}
\item $C'$ is compatible with $S$, $shell(C')$ is empty, and $C'$ does not have a floating component. In other words, $C'$ is a possible expanded core for a table $T$ such that $\shellT = S$.
\item The $y-k_S$ elements of $C'$ which are not in $S$ are labeled only by their relative order.
\end{enumerate}
is $y!(y-k_S)!{c_y}$.
\end{corollary}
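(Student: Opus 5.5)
We derive the corollary from Lemma~\ref{lem:coretotalweight} in two stages. First we treat cores without attached chains of known $4$-columns; then we account for the chains through the substitution $t\mapsto t'$.

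\textbf{Stage 1: cores without chains.} Take $C$ to have $y'$ columns. By Lemma~\ref{lem:coretotalweight}, its total weight (with the new elements labeled by relative order) is
\[
W_S(y')=Weight_{Cores}(S)\prod_{j=k+1}^{y'}j(j+2)(j+4),\qquad k=k_S .
\]
Write $\prod_{j=1}^{m} j(j+2)(j+4)=m!\,(m+2)!\,(m+4)!/48$, and recall $N(u)=\sum_m \frac{(m+1)(m+2)(m+4)!}{48}u^m$. Then
\[
\frac{W_S(y')}{y'!\,(y'-k)!}=\frac{Weight_{Cores}(S)}{\prod_{j=1}^{k}j(j+2)(j+4)}\cdot\frac{(y'+2)!\,(y'+4)!}{48\,(y'-k)!} .
\]
Differentiating $N$ $k$ times gives
\[
u^kN^{(k)}(u)=\sum_{m}\frac{(m+1)(m+2)(m+4)!}{48}\,\frac{m!}{(m-k)!}\,u^m=\sum_m \frac{(m+2)!\,(m+4)!}{48\,(m-k)!}\,u^m .
\]
This matches the display above with $m=y'$. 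Hence the chain-free cores have the generating function
\[
\frac{u^{k}N^{(k)}(u)\,Weight_{Cores}(S)}{\prod_{j=1}^k j(j+2)(j+4)},
\]
in the normalization where a core with $y'$ columns contributes $W_S(y')/(y'!\,(y'-k)!)\cdot u^{y'}$.

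\textbf{Stage 2: attaching chains.} An expanded core $C'$ with $y$ columns is obtained from a reduced core $C$ with $y'$ columns by attaching, to each of its $3y'$ pairs, a chain of known $4$-columns of length $l_i\ge 0$, with $\sum_i l_i=y-y'$. Each added column contributes a weight $(\mu_4-3)$ and one new element. We count insertions as in the proof of Lemma~\ref{lem:shellgeneratingfunction}. When the current table has $y''$ columns, the new column has $y''+1$ possible positions and the new element has $y''-k+1$ possible slots in the relative order. This converts the $1/(y'!\,(y'-k)!)$ normalization into $1/(y!\,(y-k)!)$.

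Ignoring the ordering factors, each pair receives a geometric chain $\sum_{l\ge0}((\mu_4-3)t)^l$. Since a core with $y'$ columns has $3y'$ pairs, the combined effect is
\[
t^{y'}\mapsto \frac{t^{y'}}{(1-(\mu_4-3)t)^{3y'}}=(t')^{y'} .
\]
Substituting $u=t'$ in Stage 1 then gives $C_S(t)$, with total weight $y!\,(y-k)!\,c_y$ as claimed.

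\textbf{Main obstacle.} The difficult step is justifying that chains factor as independent geometric series. Several points need checking:
\begin{itemize}
\item Chains must be well defined and uniquely recoverable from $C'$, via the reduction steps of the paper.
\item Signs must be preserved when a known $4$-column is inserted on a pair (the merge is a row transposition that should not change the sign).
\item The labeling count must telescope exactly.
\end{itemize}
I would first verify the sign on a single insertion, then argue uniqueness of the decomposition as in \cite{BLP23}.
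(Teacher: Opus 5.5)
Your proposal is correct and follows the paper's proof essentially step for step. You first match the coefficients of $u^{k}N^{(k)}(u)$ against the product in Lemma~\ref{lem:coretotalweight}, and you track the factor $48$ correctly, which the paper's displayed $n_{y'}$ drops. You then account for the chains by distributing $y-y'$ known $4$-columns among the $3y'$ pairs, with the insertion factor $\prod_{j=y'+1}^{y} j(j-k_S)=\frac{y!(y-k_S)!}{y'!(y'-k_S)!}$ converting the normalization, exactly as the paper does. The uniqueness and sign points you flag are likewise taken for granted in the paper, by appeal to the $m_1=0$ case.
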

\begin{proof}
We first show that if we write 
\[
\frac{{t}^{k_S}N^{(k_S)}(t)Weight_{Cores}(S)}{\prod_{j=1}^{k_S}{j(j+2)(j+4)}} = \sum_{y'=0}^{\infty}{{c_{y', \text{ no chains}}}t^{y'}}
\]
then the total weight of the cores $C$ with $y'$ columns which are compatible with $S$ where the elements of $C$ which are not in $S$ are labeled only by their relative order is ${y'}!(y'-k_S)!c_{y', \text{ no chains}}$.

To see this, recall that $N(t) = \sum_{y'=0}^{\infty}{n_{y'}t^{y'}}$ where $n_{y'} = \frac{y'!(y'+2)!(y'+4)!}{(y'!)^2}$ so 
\[
t^{k_S}N^{k_S}(t) = \sum_{y'=0}^{\infty}{\frac{y'!}{(y'-k_S)!}n_{y'}t^{y'}} = \sum_{y'=0}^{\infty}{\frac{y'!(y'+2)!(y'+4)!}{y'!(y' - k_S)!}t^{y'}}
\]
and thus $c_{y', \text{ no chains}} = \frac{y'!(y'+2)!(y'+4)!}{y'!(y' - k_S)!} \cdot \frac{Weight_{Cores}(S)}{\prod_{j=1}^{k_S}{j(j+2)(j+4)}}$.
By Lemma \ref{lem:coretotalweight}, the total weight of the cores $C$ with $y'$ columns which are compatible with $S$ where the elements of $C$ which are not in $S$ are labeled only by their relative order is
\begin{align*}
\left(\prod_{j=k+1}^{y'}{j(j+2)(j+4)}\right)Weight_{Cores}(S) &= \frac{y'!(y'+2)!(y'+4)!}{\prod_{j=1}^{k_S}{j(j+2)(j+4)}}Weight_{Cores}(S) \\
&= {y'}!(y'-k_S)!c_{y', \text{ no chains}}.
\end{align*}

The reason why we replace $t$ with $t' = \frac{t}{\left(1-(\mu_4 - 3)t\right)^3}$ is the same reason why we have $N(t')$ rather than $N(t)$ for the case when $m_1 = 0$. For each column in the core, each of the three pairs in the column may have a chain of known $4$-columns leading to it. Thus, for each $t$, we have a factor of $\frac{1}{\left(1-(\mu_4 - 3)t\right)^3}$ which corresponds to replacing $t$ by $t' = \frac{t}{\left(1-(\mu_4 - 3)t\right)^3}$.

Since our setting is more subtle, we give a more careful verification that replacing $t$ with $t'$ is correct. To do this, we show that the total weight of the expanded cores $C'$ such that $C'$ is compatible with $S$, $C'$ has $y$ columns, and $core(C') = C$ has $y'$ columns is the same as the coefficient of $t^{y}$ in $y!(y-k_S)!c_{y', \text{ no chains}}(t')^{y'}$. Observe that 
\begin{enumerate}
\item The coefficient of $t^{y}$ in $y!(y-k_S)!c_{y', \text{ no chains}}(t')^{y'}$ is 
\[
y!(y-k_S)!\left(\mu_4 - 3\right)^{y-y'}\binom{3y' + (y-y') - 1}{3y'-1}c_{y', \text{ no chains}}
\] as we have to choose $y-y'$ factors of $(\mu_4 - 3)t$ from $3y'$ factors of $\frac{1}{1 - (\mu_4 - 3)t} = \sum_{j=0}^{\infty}{(\mu_4 - 3)^j}$ which corresponds to putting $y-y'$ unlabeled balls into $3y'$ labeled bins.
\item The total weight of the expanded cores $C'$ such that $C'$ is compatible with $S$, $C'$ has $y$ columns, and $C = core(C')$ has $y'$ columns is 
\begin{align*}
&(\mu_4 - 3)^{y-y'}\binom{3y' + (y-y') - 1}{3y'-1}\left(\prod_{j=y'+1}^{y}{j(j-k_S)}\right){y'}!(y'-k_S)!c_{y', \text{ no chains}} \\
&= y!(y-k_S)!\left(\mu_4 - 3\right)^{y-y'}\binom{3y' + (y-y') - 1}{3y'-1}c_{y', \text{ no chains}}
\end{align*}
as 
\begin{enumerate}
\item[1.] In order to add $y - y'$ known $4$-columns to a core $C$ with $y'$ columns, we need to have a total length of $y-y'$ for the $3y'$ chains (possibly of length $0$) leading to pairs of elements of $C$.
\item[2.] Whenever we increase the length of a chain by $1$, if we currently have $j-1$ columns then we have a factor of $j(j-k_S)$ as there are $j$ choices for the position of the new column and there are $j-k_S$ choices for where the new element is inserted in the ordering of the added elements (i.e., the elements which are in $C'$ but not in $C$).
\end{enumerate}
\end{enumerate}

\end{proof}
\section{Implementation Details}
\label{ch:implementation}

When enumerating the possible shells, it is useful to first place every part of the shell except the pairs. 
Starting from a fixed mark-position table \(P\), the program enumerates every
admissible placement of triples of covered elements and blocks of size 4 (we can avoid blocks of size $5$ using the reduction in \ref{sec:connectionformula} or by adjusting the column factor for ${\underline{\!\times\!}\,}_5^1$ columns, see \cref{def:columnfactor}). 
The program then partitions the remaining unmarked vacancies into
anonymous pairs without giving them persistent global names.  We call the
resulting table 
a \emph{partial shell} 
and denote it by
\(\widetilde S\). 

The total contribution to $F_6(t)$ from shells $S$ whose partial shell is $\widetilde{S}$ can be split into two parts. The first part consists of a symmetry factor (see \cref{sec:implementation-paper-symmetries}) and the product $\prod_cG(c)$ of the column factors. The second part comes from the possible choices for which pairs of anonymous elements are equal to each other and/or to a covered element and the possible choices for combining the resulting shell $S$ with an expanded core. This second part only depends on the row supports of the covered elements and anonymous pairs in $\widetilde{S}$ which allows us to reuse computations.

To capture this, the program expresses the row supports of the covered elements of $\widetilde{S}$ which don't appear in every row of $\widetilde{S}$ and the anonymous pairs of $\widetilde{S}$ as \emph{fragments} where a fragment records the following:
\begin{enumerate}
\item whether the element appears two or four times in $\widetilde{S}$. This is recorded with a polarity which is $+$ if the element appears four times in $\widetilde{S}$ and $-$ if the element appears twice in $\widetilde{S}$.
\item The row support of the element in $\widetilde{S}$. We record this by recording the pair of row indices $i < j \in [6]$ such that the element either has $-$ polarity and has row support $\{i,j\}$ or has $+$ polarity and has row support $[6] \setminus \{i,j\}$.
\item Whether the element is a covered element or part of an anonymous pair. Note that fragments with $+$ polarity must be covered elements.
\end{enumerate}
We call the collection of fragments for $\widetilde{S}$ a partial shell interface key. This is formalized in \cref{sec:implementation-key}.

To avoid repeating the same computations, we have an \emph{outer partial-shell dictionary}. Whenever we encounter a partial shell interface key which is in our outer partial-shell dictionary or is isomorphic to a partial shell interface key in our dictionary, we look up the result for this partial shell interface key while being careful to use the correct sign (see \cref{sec:implementation-signs} for details). 

When we encounter a partial shell interface key which is not isomorphic to a partial shell interface key in our dictionary, we branch over the possible ways that the elements of $\widetilde{S}$ can be identified with each other. Each identification branch \(b\) results in a shell \(S_b\). At this point, we only need to compute the net number $Weight_{Cores}(S_b)$ of minimum compatible cores for $S_b$.

We obtain a further savings of computations by observing that $Weight_{Cores}(S_b)$ only depends on the row supports of the elements of $S_b$ which do not appear in every row of $S_b$. This data is recorded in a shell interface key which is obtained from the partial shell interface key and the branch $b$ as follows:
\begin{enumerate}
\item When elements in fragments are identified with each other, we either merge these fragments into a new $+$ fragment or delete these fragments depending on whether the element appears four or six times in the resulting shell $S_b$. This must be handled carefully as it is important to have the correct multiplicity and parity for these identifications. We describe how we do this in \cref{sec:implementation-identifications}.
\item For each element which is still anonymous, we give it a fresh label. Thus, we now consider all fragments to be labeled and it is no longer true that $+$ fragments must be covered elements.
\end{enumerate}

When computing $Weight_{Cores}(S_b)$, we use an \emph{inner compatible-core dictionary} with these shell interface keys. Whenever we encounter a shell interface key which is in our inner dictionary or is isomorphic to a shell interface key in our inner dictionary, we look up the result for this shell interface key while being careful to use the correct sign (see \cref{sec:implementation-signs} for details). When we encounter a shell interface key which is not isomorphic to a shell interface key in our inner dictionary, we compute $Weight_{Cores}(S_b)$ and add it to our dictionary.

In summary, at the interface key level, each identification branch follows the roadmap
\begin{equation}
  K_{\mathrm{par}}(\widetilde S)=K_0
  \xrightarrow{\operatorname{identifications}} K_{\mathrm{par},b}
  \xrightarrow{\operatorname{label}} K(S_b).
  \label{eq:implementation-interface-key-roadmap}
\end{equation}
where $K_{\mathrm{par}}(\widetilde S)=K_0$ is the initial partial shell interface key, $K_{\mathrm{par},b}$ is the modified interface key after the identifications given by the branch $b$, and $K(S_b)$ is the final shell interface key obtained by giving labels to the anonymous elements in $K_{\mathrm{par},b}$. 
The outer partial-shell dictionary stores the complete sum over identification branches for each partial shell interface key while the inner dictionary stores the total weight of the minimum compatible cores for shells with a given shell interface key, which is the data needed by one branch of the identification.

We now describe how we implement this framework as a six-stage executable pipeline, including both dictionary lookups. 
The calculations for this project are implemented in MATLAB.
\subsection{Computational pipeline}
\label{sec:implementation-target-pipeline}

The computation begins with a feasible mark-position table \(P\), specified in \cref{sec:implementation-paper-symmetries}, and then follows six stages:
\begin{enumerate}
  \item Initialize the case from its mark-position table \(P\) and record \(|\Aut(P)|\) for the multiplicity factor;
  \item Enumerate every admissible placement, compatible with \(P\), of triples and known blocks of size \(4\) of covered elements, retaining branches on which some copies of covered elements 
  remain unplaced and emitting separate \(\underline{4}\)-columns in a prescribed covered-element order;
  \item Pair all remaining unmarked vacancies, thereby producing the literal paired partial shells, record \(n_{\underline{3}}(\widetilde S)\) where $n_{\underline{3}}(\widetilde S)$ is the number of $\underline{3}$ columns in $\widetilde{S}$, 
  and attach the symmetry factor defined in \cref{sec:implementation-paper-symmetries};
  \item Compute the local factor $G(c)$ of every shell column using the definitions in \cref{sec:shell-column-factors}, extract the 
  partial-shell interface 
  key, and query the outer partial-shell dictionary;
  \item 
  If the partial shell interface key is not in the outer partial shell dictionary, enumerate every admissible identification branch, give each surviving anonymous fragment a distinct fresh shell label, compute the determinant parity and excess \(k_S\), and either obtain the signed compatible-core weight from the inner dictionary or compute it and store it in the dictionary. After this, sum the contributions from the identification branches and store the result in the outer partial-shell dictionary.
  \item 
  Combine the reused or newly computed result with the current local and case-level factors.
\end{enumerate}
The column patterns and corresponding local factors used by the implementation are listed in \cref{lem:columnfactors}. The dictionary hit--miss control flow is summarized in \cref{fig:implementation-outer-dictionary}. Note that the inner dictionary is reached only along the outer miss branch. For each shell \(S_b\), it either returns the previously computed value of $Weight_{Cores}(S_b)$ 
or invokes the exact compatible-core calculation described in \cref{sec:implementation-core-dictionary} and stores the result, as displayed later in \cref{fig:implementation-core-weight-dictionary}. The two dictionaries therefore remove repetition at different scales.



The inner dictionary is reached only along the outer miss branch. For each materialized shell \(S_b\), it either returns the previously computed value of \(Weight_{Cores}(S_b)\) or invokes the exact compatible-core calculation described in \cref{sec:implementation-core-dictionary} and stores the result, as displayed later in \cref{fig:implementation-core-weight-dictionary}. The two dictionaries therefore remove repetition at different scales. 

\begin{figure}[htbp]
\centering
\begin{tikzpicture}[
  cache stage/.style={
    draw=diagramgray,
    fill=diagramlightblue,
    rounded corners=2pt,
    align=center,
    inner xsep=8pt,
    inner ysep=5pt,
    font=\small
  },
  cache query/.style={
    diamond,
    aspect=2.25,
    draw=diagramgray,
    fill=diagramlightorange,
    align=center,
    inner xsep=4pt,
    inner ysep=3pt,
    font=\small
  },
  cache flow/.style={-{Latex[length=2mm]},draw=diagramblue,thick},
  cache branch/.style={font=\footnotesize,fill=white,inner sep=1.5pt}
]
  \node[cache stage] (key)
    {Item 4\\construct support key};
  \node[cache query,below=8mm of key] (query)
    {query outer\\dictionary};
  \node[cache stage,below left=12mm and 17mm of query] (hit)
    {reuse the complete\\branch/core sum};
  \node[cache stage,below right=12mm and 17mm of query] (identify)
    {Program item 5\\enumerate identifications};
  \node[cache stage,below=7mm of identify] (cores)
    {inner core-weight lookup\\(compute and store on a miss)};
  \node[cache stage,below=7mm of cores] (store)
    {cache the aggregate sum};

  \draw[cache flow] (key) -- (query);
  \draw[cache flow] (query) -- node[cache branch,above left] {hit} (hit);
  \draw[cache flow] (query) -- node[cache branch,above right] {miss} (identify);
  \draw[cache flow] (identify) -- (cores);
  \draw[cache flow] (cores) -- (store);
\end{tikzpicture}
\caption{The outer partial-shell lookup; the inner compatible-core lookup is
reached only on an outer miss.}
\label{fig:implementation-outer-dictionary}
\end{figure}

\subsection{Mark-position tables and symmetry factors}\label{sec:implementation-paper-symmetries}
For the formula in Theorem \ref{thm:Ftformula}, we need to only sum over non-isomorphic shells $S \in \mathcal{S}$ and we need to multiply this by $\frac{6!}{|Aut(S)|}$. Since checking whether a shell $S$ is isomorphic to a shell we've already considered and computing the size of the automorphism group of $S$ is complicated, we instead use a simpler method which gives the same result. To describe this method, we need a few definitions.
\begin{definition}
We define a mark placement table $P$ to be a $6 \times j$ table such that 
\begin{enumerate}
\item Each entry of $P$ is either blank or an $\times $ together with the element it covers.
\item Each column of $P$ contains at least one $\times $ and each row of $P$ contains at most one $\times $.
\end{enumerate}
\end{definition}
\begin{definition}
We say that a shell $S$ is consistent with a mark placement table $P$ if the positions of the marks in $S$ and $P$ are the same.

We say that a mark placement table $P$ is feasible if there exists a shell $S$ such that $S$ is consistent with $P$. We define $\mathcal{P}$ to be the set of all feasible possible mark placement tables up to isomorphism.
\end{definition}
\begin{definition}
Given a mark position table $P$, we define the automorphism group $Aut(P)$ of $P$ to be the subgroup of $Sym(Rows) \times Sym(Columns) \times Sym(Elements)$ which preserves $P$ (where we only consider the columns and elements contained in $P$). 
\end{definition}
Our method for handling symmetries of the shells is as follows:
\begin{enumerate}
\item We consider each of the possible mark position tables $P \in \mathcal{P}$ one by one.
\item We enumerate over the possible shells $S$ which are consistent with $P$ such that
\begin{enumerate}
\item[1.] For pairs in $S$ which are not equal to a covered element, we do not specify a specific element for the pair. That said, we do specify which which pairs are equal to covered elements and which pairs are equal to each other.
\item[2.] For the columns of $S$, we first have columns containing marks followed by $\underline{3}$ columns and then $\underline{4}$ columns where the $\underline{4}$ columns are in a specific order.
\end{enumerate}
\item For each shell $S$, We multiply by $\frac{6!}{|Aut(P)| \cdot (\# \text{ of } \underline{3} \text{ columns})!}$.
\end{enumerate}
We now explain why this is equivalent to summing over non-isomorphic shells $S \in \mathcal{S}$ and multiplying by $\frac{6!}{|Aut(S)|}$. Let $P \in \mathcal{P}$ be a mark position table which we consider and let $S$ be a shell which is consistent with $P$. Observe that since we do not consider other mark position tables $P_2$ which are isomorphic to $P$ but not equal to $P$, the only permutations $\pi \in Sym(Rows) \times Sym(Columns) \times Sym(Elements)$ which can yield a shell $S_2$ that we count which is isomorphic to $S$ (including $S$ itself) are permutations $\pi$ which preserve $P$ and preserve the fact that the columns are ordered so that the marked columns come first followed by the $\underline{3}$ columns and then the $\underline{4}$ columns where the $\underline{4}$ columns are in a specific order. Equivalently, $\pi$ must consist of an element of $Aut(P)$ together with a permutation of the $\underline{3}$ columns. 
Thus, there are $|Aut(P)| \cdot (\# \text{ of } \underline{3} \text{ columns})!$ 
such permutations $\pi$. 
\begin{remark}
Note that we only consider permutations of the covered elements as not assigning a specific element to the other pairs in $S$ ensures that we do not count two different isomorphic shells which are only different because of how the pairs are labeled.
\end{remark}
Now observe that for each shell $S$, there are $\frac{|Aut(P)| \cdot (\# \text{ of } \underline{3} \text{ columns})!}
{|Aut(S)|}$ different shells which are isomorphic to $S$ which we consider. Since we only want to consider one of the shells which are isomorphic to $S$, we divide by $\frac{|Aut(P)| \cdot (\# \text{ of } \underline{3} \text{ columns})!}
{|Aut(S)|}$. Combined with the factor of $\frac{6!}{|Aut(S)|}$ from the formula in Theorem \ref{thm:Ftformula}, this gives a factor of 
\begin{equation}
    \frac{6!}{|Aut(P)| \cdot (\# \text{ of } \underline{3} \text{ columns})!}
\end{equation}\label{eq:implementation-partial-shell-factor}

as needed.
\begin{example}
If $P$ is the following mark placement table then $|Aut(P)| = 4!*2! = 48$ so $\frac{6!}{|Aut(P)|} = 15$.
\begin{center}
\begin{tabular}{|c|}
\hline
$\times a$ \\ \hline
$\times a$ \\ \hline
$\times a$ \\ \hline
$\times a$ \\ \hline
$ $ \\ \hline
$ $ \\ \hline
\end{tabular}
\end{center}
For the two shells $S$ which are consistent with $P$, $Aut(S) = Aut(P)$ and there are no shells $S_2 \neq S$ which are isomorphic to $S$ and consistent with $P$. Thus, for both of these shells, $\frac{6!}{|Aut(S)|} = \frac{6!}{|Aut(P)|} = 15$.
\end{example}
\begin{example}
If $P$ is the following mark placement table 
\begin{center}
\begin{tabular}{|c|c|}
\hline
$\times a$ & $ $ \\ \hline
$\times a$ & $ $ \\ \hline
$ $ & $\times b$ \\ \hline
$ $ & $\times b$ \\ \hline
$ $ & $ $ \\ \hline
$ $ & $ $ \\ \hline
\end{tabular}
\end{center}
then $|Aut(P)| = 16$ as in addition to being able to swap rows $1$ and $2$, swap rows $3$ and $4$, and/or swap rows $5$ and $6$, we can also swap rows $1,3$, swap rows $2,4$, swap the elements $a,b$, and swap the columns and together, these swaps give an additional automorphism of $P$. Thus, $\frac{6!}{|Aut(P)|} = 45$.

If $S$ is the following shell which is consistent with $P$
\begin{center}
\begin{tabular}{|c|c|}
\hline
$\times a$ & $d$ \\ \hline
$\times a$ & $e$ \\ \hline
$a$ & $\times b$ \\ \hline
$c$ & $\times b$ \\ \hline
$a$ & $d$ \\ \hline
$c$ & $e$ \\ \hline
\end{tabular}
\end{center}
then $|Aut(S)| = 1$. To see this, observe that none of the elements can be mapped to a different element by an automorphism as $a$ and $b$ are the only covered elements, $a$ appears four times in a column while $b$ does not, $c$ is the only element which is not $a$ and appears in the same column as $a$, and $d$ is the only element such that both of the rows it appears in also contain $a$. Since the columns can't be swapped by an automorphism and the rows are all distinct, $S$ has no non-trivial automorphisms. That said, there are $16$ shells $S_2$ (including $S$ itself), which are compatible with $P$ and are isomorphic to $S$. To see this, observe that we can choose either $a$ or $b$ to appear four times in its column, there are $4$ choices for the additional rows where this element appears (once in row $5$ or $6$ and once in the remaining two rows), and there are $2$ choices for the pairing in the other column (as rows $5$ and $6$ cannot be part of a pair). 

Thus, $\frac{6!}{|Aut(S)|*16} = \frac{6!}{|Aut(P)|} = 45$.
\end{example}
\begin{example}
If $P$ is the following mark placement table then $|Aut(P)| = 6! = 720$ so $\frac{6!}{|Aut(P)|} = 1$.
\begin{center}
\begin{tabular}{|c|}
\hline
$\times a$ \\ \hline
$\times b$ \\ \hline
$\times c$ \\ \hline
$\times d$ \\ \hline
$\times e$ \\ \hline
$\times f$ \\ \hline
\end{tabular}
\end{center}
For this mark placement table, we will always have three $\underline{3}$ columns so for all shells $S$ which are consistent with $P$, we have an additional factor of $\frac{1}{3!} = \frac{1}{6}$. For example, if $S$ is the following shell
\begin{center}
\begin{tabular}{|c|c|c|c|}
\hline
$\times a$ & $d$ & $e$ & $f$ \\ \hline
$\times b$ & $d$ & $e$ & $f$ \\ \hline
$\times c$ & $d$ & $e$ & $f$ \\ \hline
$\times d$ & $a$ & $b$ & $c$ \\ \hline
$\times e$ & $a$ & $b$ & $c$ \\ \hline
$\times f$ & $a$ & $b$ & $c$ \\ \hline
\end{tabular}
\end{center}
then 
\begin{enumerate}
\item $|Aut(S)| = 12$ as in addition to being able to apply a permutation to $a,b,c$ and then apply the same permutation to $d,e,f$, rows $1,2,3$, rows $4,5,6$, and columns $2,3,4$, we can also swap $a$ and $d$, swap $b$ and $e$, swap $c$ and $f$, swap rows $1$ and $4$, swap rows $2$ and $5$, and swap rows $3$ and $6$ and together, these swaps give an automorphism of $S$.
\item There are $360$ shells $S_2$ (including $S$ itself) which are isomorphic to $S$ and consistent with $P$ as there are $10$ ways to choose the partition of the $\underline{3}$ columns into two triples, $6$ ways to choose how the elements in the top triple are matched up with the elements in the bottom triple (e.g., for the above shell $S$, the matching is $(a,d), (b,e), (c,f)$), and $6$ ways to choose the order of the $\underline{3}$ columns.
\end{enumerate}
Observe that $\frac{6!}{|Aut(S)|*360} = \frac{6!}{|Aut(P)|*6} = \frac{1}{6}$, as needed.
\end{example}
\subsubsection{Data for mark position tables} 
For our implementation, we need to enumerate over all of the possible mark position tables. To do this, we partition the possible mark position tables based on how many marks they have and assign each mark position table a case identifier. We then determine $|Aut(P)|$ and record  $n_{\underline{3}}$ (which can be deduced just from $P$). This is sufficient to give the symmetry factor in \eqref{eq:implementation-partial-shell-factor} which is inherited by every partial shell $\widetilde{S}$ which is consistent with $P$.


The number of different mark position tables for each number of marks is shown in \cref{tab:implementation-term-counts}.

\begin{table}[H]
\centering
\caption{Numbers of different mark position tables for a given number of marks
.}
\label{tab:implementation-term-counts}
\small
\begin{tabular}{@{}cc@{}}
\toprule
Number of marks & Listed cases\\
\midrule
1 & 1\\
2 & 3\\
3 & 8\\
4 & 24\\
5 & 51\\
6 & 159\\
\bottomrule
\end{tabular}
\end{table}


We now describe how we enumerate over all of the paired partial shells $\widetilde{S}$ which are consistent with a given mark position table $P$.
\subsection{Generating paired partial shells}
\label{sec:implementation-partial-shells}

Throughout the implementation, rows are numbered by $1,2,\ldots,6$.  A row
pair is the sorted tuple $(i,j)$ with $i<j$.  Its complement is the
four-element subset $[6]\setminus\{i,j\}$, but only the pair itself is stored.

Fix one of the mark-position records counted above.  Stages two and three
proceed in two passes: first enumerate the covered triples and blocks of size $4$;
then pair the vacancies.  The first pass does not attempt to place all six
copies of every covered element.

\subsubsection{Placing covered triples and four-blocks}
\label{sec:implementation-covered-labels}

For every covered element \(a\), record the rows in which the marks cover \(a\)
and the complementary rows available to its unmarked copies.  At this stage
the generator places only triples and known blocks of size \(4\) of covered
elements:
\begin{enumerate}
  \item If \(a\) is covered in two rows, we either don't place further copies of $a$, place a block of size $4$ of $a$ in a compatible four-position vacancy of an existing column (which can only be a column with two marks covering $a$ and no other marks), or place a block of size $4$ of $a$ in a new column. 
  \item If \(a\) is covered in one or three rows, enumerate every compatible placement of a triple of $a$. 
  A placement is rejected if it creates an unmarked singleton or violates a known pairing.  In the one-mark case, the triple accounts for only three of the five unmarked copies, so two copies remain unresolved at this stage.
\end{enumerate}
Note that some copies of a covered element may remain unplaced and later enter the step where we branch over possible identifications of elements. 
\subsubsection{Pairing the vacancies}

After the selected covered triples and four-blocks have been placed, every
empty unmarked position must belong to a pair.  A two-position vacancy has one
pairing.  Four positions $a<b<c<d$ have the three pairings
\begin{equation}
  ab|cd,
  \qquad ac|bd,
  \qquad ad|bc.
  \label{eq:implementation-three-pairings}
\end{equation}
No shell column has six vacancies: such a column is wholly unmarked and
belongs to the Gaussian core rather than the shell.  Thus the two- and
four-position cases above exhaust the shell pairing step. 

Each admissible choice for placing blocks of size $4$ of covered elements (including not placing such blocks), placing triples of covered elements, and pairing the vacancies gives a separate partial shell.
\subsection{Interface keys}
\label{sec:implementation-key}

\subsubsection{Fragments}
After we place the triples and blocks of size $4$ of covered elements, we represent the current row supports of the covered elements and anonymous pairs by fragments. Each fragment consists of the following:
\begin{center}
\begin{tabular}{@{}ll@{}}
\toprule
Field & Meaning\\
\midrule
\texttt{support} & one of the fifteen row pairs;\\
\texttt{polarity} & \texttt{MINUS} for a two-row support, \texttt{PLUS} for its four-row complement;\\
\texttt{label} & an immutable shell-element identifier, or \texttt{None} in a partial-shell key.\\
\bottomrule
\end{tabular}
\end{center}
The word ``plus'' does not mean positive determinant sign.  It means that the element occupies the four rows complementary to the stored pair.  Multiplicity is not a field of a fragment: each anonymous pair 
in a partial shell gives a separate fragment, even when several fragments have the same three field values.

A labeled fragment is written $(\ell,\sigma,ij)$; an anonymous fragment is $(u,\sigma,ij)$. 
The value \texttt{None} occurs only in partial shell and intermediate interface keys; every fragment in a shell key has a label. If $j$ distinct anonymous fragments have row-pair index $ij$ and polarity $s\in\{-,+\}$, we use the display shorthand
\begin{equation}
  j u_{ij}^{s}.
  \label{eq:implementation-anonymous-fragment-shorthand}
\end{equation}
The leading coefficient $j$ abbreviates repeated entries in the surrounding fragment multiset; it is not part of any one fragment.  An implementation may run-length encode that multiset internally, but every choice that distinguishes fragments must behave exactly as the occurrence-level enumeration.

\paragraph{Example.}
Suppose the partial shell contains 
a covered element $a$ in rows $1,2$, a second covered element $b$ in rows $3,4,5,6$, a third covered element $c$ in rows $5,6$, a fourth covered element $d$ in rows $1,2,5,6$, 
an anonymous pair in rows $1,2$, and two distinct anonymous pairs in rows $3,4$.  The labeled fragments are
\begin{align*}
  a^-_{12}&=\texttt{Fragment}((1,2),\texttt{MINUS},a),\\
  b^+_{12}&=\texttt{Fragment}((1,2),\texttt{PLUS},b),\\
  c^-_{56}&=\texttt{Fragment}((5,6),\texttt{MINUS},c).\\
  d^+_{34}&=\texttt{Fragment}((3,4),\texttt{PLUS},d).
\end{align*}
The unlabeled fragments are $u^-_{12} = \texttt{Fragment}((1,2),\texttt{MINUS},\texttt{None})$ and $2u_{34}^{-}$. The shorthand $2u_{34}^{-}$ abbreviates the two separate records
\[
  \texttt{Fragment}((3,4),\texttt{MINUS},\texttt{None}),
  \qquad
  \texttt{Fragment}((3,4),\texttt{MINUS},\texttt{None}).
\]
Note that the $+$ fragments store the two rows which do not contain the corresponding covered element rather than the four rows which contain this element. The fragments $a^-_{12}$ and $b^+_{12}$ cannot be identified because their existing labels are distinct though the fragments $b^+_{12}$ and $u^-_{12}$ can be identified. Similarly, the $a^-_{12}$ and $c^-_{56}$ fragments cannot be identified because they have different labels. The two $u^-_{34}$ fragments are separate fragments which can be identified with either $a^-_{12}$ or $c^-_{56}$. 

\subsubsection{Definition of interface keys}

Literal fragment names are immaterial, but their supports, polarities, multiplicities, and labeled equality classes are not.  The following definition captures this data.

\begin{definition}[Interface key]
\label{def:implementation-interface-key}
Let $T$ be a partial shell or shell. 
The \emph{interface key} of $T$ is the tuple 
\begin{equation}
  K(T)=
  \bigl(
  K_{\ell,-},K_{u,-},K_{\ell,+},K_{u,+}
  \bigr),
  \label{eq:implementation-key}
\end{equation}
where
\begin{itemize}
  \item $K_{\ell,-}$ is the multiset of pairs $(i,j)$ such that there is a labeled $-$ fragment with support $\{i,j\}$.
  \item $K_{u,-}$ is the multiset of pairs $(i,j)$ such that there is an unlabeled $-$ fragment with support $\{i,j\}$.
  \item $K_{\ell,+}$ is the multiset of pairs $(i,j)$ such that there is a labeled $+$ fragment with support $\{i,j\}$. 
  \item $K_{u,+}$ is the multiset of  pairs $(i,j)$ such that there is an unlabeled $+$ fragment with support $\{i,j\}$. 
\end{itemize}
Here $\ell$ and $u$ mean labeled and unlabeled, while $-$ and $+$ encode two-row and complementary four-row support. Repeated entries are retained as separate fragment occurrences.
\end{definition}

For a partial shell $\widetilde S$, we write
$K(\widetilde S) = K_{\mathrm{par}}(\widetilde S)$ and call this a \emph{partial shell interface
key}; its fragments may be unlabeled but there can be no unlabeled $+$ fragments (these can only occur in the intermediate identification steps). For a shell $S$, every surviving
fragment belongs to a shell element and is labeled (i.e., $K_{u,-}=K_{u,+}=\varnothing$), and we call $K(S)$ a \emph{shell interface key}. 
This distinction only concerns the stage of the computation, partial shell fragments and shell fragments are essentially the same kind of data type.


For example, suppose the partial shell is 
\[
\widetilde S=
\begin{pmatrix}
 a&\alpha\\
 a&\alpha\\
 b&\beta\\
 b&\beta\\
 b&c\\
 b&c
\end{pmatrix},
\]
where $a,b,c$ are labeled and $\alpha,\beta$ are anonymous. The shell interface key $K_{par}(\widetilde{S})$ is 
\[
  K_{\ell,-}=\{12,56\},\qquad
  K_{u,-}=\{12,34\},\qquad
  K_{\ell,+}=\{12\},\qquad
  K_{u,+}=\varnothing.
\]
Note that the fragment $b^+_{12}$ cannot be identified with the fragment $a^-_{12}$ but can be identified with the fragment $u^-_{12}$ corresponding to the pair of $\alpha$ in the diagram. 
On the 
identification branch with no identifications, the two anonymous fragments receive distinct fresh labels, say $d$ and $e$,
so the corresponding shell key has support projection
\[
  K_{\ell,-}=\{12,12,34,56\},\qquad
  K_{u,-}=\varnothing,\qquad
  K_{\ell,+}=\{12\},\qquad
  K_{u,+}=\varnothing.
\]

\begin{implementationbox}
The four blocks of a partial-shell interface key must remain semantically distinguishable.  Labeled versus anonymous and plus versus minus are not just cosmetic tags. A shell interface key uses the same four-block serialization with empty anonymous blocks. 
\end{implementationbox}

In our implementation, we use an equality-aware
interface record.  During a cache-miss calculation, the implementation carries
the corresponding label information internally, but it does not construct
this record as a separate lookup key.  For the outer lookup, the implementation
suppresses literal positive label values and serializes the resulting four
support blocks.  At this pre-identification point, each positive element that
remains on the boundary is represented by at most one aggregate two- or
four-row fragment, while every anonymous pair contributes one separate
fragment.  The program key retains row-pair multiplicities,
labeled/anonymous status, and polarity, but it does not retain the equality
partition of the positive labels.

The executable then examines all \(6!=720\) row images of this support key and
uses the least serialization as its row-orbit key, denoted
\(\widehat K_{\mathrm{par}}\).  The stored value is not merely a list of
identification templates or a single compatible-core count.  It is the
complete Mathematica expression for the identification-branch and
compatible-core sum computed from the first literal representative on a miss:
\begin{equation}
  \widehat K_{\mathrm{par}}
  \longmapsto
  \bigl(
    \widetilde S_{\mathrm{ref}},
    \mathcal A_{\mathrm{miss}}(\widetilde S_{\mathrm{ref}})
  \bigr).
  \label{eq:implementation-outer-cache}
\end{equation}
The aggregate \(\mathcal A_{\mathrm{miss}}\) is defined explicitly in
\cref{sec:implementation-core-memoized-lookup}.

The cache key does not include the originating mark-position table,
\(n_{\underline{3}}(\widetilde S)\), the partial-shell symmetry factor,
literal column incidence, the literal determinant sign, or the local
column-factor product.  Nor does it separately retain
\(n_{\underline{4}}(\widetilde S)\), which belongs to the literal placement
record and is not a denominator in the symmetry factor.  The reference sign
is already included in the stored expression, and a hit supplies the one
relative sign described in \cref{sec:implementation-signs}.  The other
omitted quantities belong to the surrounding literal-shell or later
case-level calculation.

The four-block support projection is the key used by the MATLAB
implementation.  Before any identification branches are enumerated, this key
is canonicalized over the \(720\) row permutations and queried in the outer
cache.  A hit reuses the stored complete branch/core aggregate with the
required relative sign, whereas a miss computes that aggregate and stores it
for later reuse.

The same four-block data type represents every intermediate identification
key.

\subsection{Admissible identifications}
\label{sec:implementation-identifications}
Given a partial shell interface key $K_0=K_{\mathrm{par}}(\widetilde S)$, the program enumerates all admissible identifications of its fragments.
\begin{implementationbox}
At no point may the implementation identify two distinct labeled shell
elements.  Anonymous pairs may be identified when their row supports allow
it, and an anonymous fragment may be absorbed into one labeled element, but
two existing labels encode distinct entries of the original permutation
table.
\end{implementationbox}

\paragraph{Identification branches.}
Let $\mathcal F_0$ be the occurrence set of $K_0$. Regard each fragment as one initial class. 
An \emph{identification branch} is an admissible set
partition $\mathcal P$ of $\mathcal F_0$ that coarsens these initial classes. A singleton block leaves its fragment unchanged.  Every nonsingleton block must have one of the three support patterns described below: a complementary
minus--plus pair, two minuses on disjoint row pairs, or three minuses on disjoint pairs that cover all six rows. Moreover, the defined labels in one block must all be equal.  Consequently, fragments already carrying distinct labels can never lie in the same block.

The 
resulting interface key $K_{\mathrm{par},b}$ is obtained from these blocks. 
Complementary minus--plus blocks and three-minus blocks form a completed
element and are omitted.  A two-minus block becomes one plus fragment on the
complementary pair and inherits the block's label if it has one; otherwise it
remains anonymous.  Singleton blocks are carried forward.  This set-partition
description makes a branch independent of the order in which a recursive
program happens to apply the displayed operations.  In particular, first
joining two minuses and then joining the resulting plus to the third minus is
the same three-minus block, not an additional branch.

\subsubsection{Complementary plus/minus identification}

A minus and a plus with the same row-pair index have complementary supports:
\begin{equation}
  (-_{ij},+_{ij})\longmapsto\varnothing.
  \label{eq:implementation-pm}
\end{equation}
Such a block occupies all six rows and disappears from the residual
partial-shell key.  Before branch enumeration begins, every plus fragment in
the initial key \(K_0\) is labeled.  Indeed, every anonymous name in a freshly
generated literal partial shell comes from a paired vacancy and therefore
occupies exactly two rows.  An anonymous plus fragment can occur only later,
for example as the output of a two-minus identification in
\eqref{eq:implementation-mm}.

Consequently, a complementary minus--plus block of \(\mathcal F_0\) is
admissible only when the minus is anonymous as the plus must carry a label, the plus and minus are forbidden from having the same label, and the plus and minus cannot both carry the same existing label as in this case the corresponding element would have already appeared $6$ times in $\widetilde{S}$ so it would not have formed a fragment. 

\subsubsection{Two-minus identification}

Let $ij,k\ell,rs$ be a partition of $[6]$ into three disjoint pairs.  A block
containing two minus fragments produces a plus on the complementary pair:
\begin{equation}
  (-_{ij},-_{k\ell})\longmapsto +_{rs}.
  \label{eq:implementation-mm}
\end{equation}
If one input is labeled, the output inherits that label.  Two distinct labeled inputs are forbidden.

\subsubsection{Three-minus identification}
For a block containing three minus fragments on disjoint row pairs, the corresponding element appears in all $6$ rows of $S$ so the fragment vanishes.
\begin{equation}
  (-_{ij},-_{k\ell},-_{rs})\longmapsto\varnothing.
  \label{eq:implementation-mmm}
\end{equation}
Again, at most one distinct pre-existing label may occur among the inputs.

In the implementation, on an outer-cache miss we examine these merge choices
in the fixed row-pair order.  The unchanged choice is included, so the resulting 
terms include nonmaximal as well as maximal identifications.  Mathematically, 
each resulting term is interpreted by the set partition of the original
occurrences defined above, rather than by an ordered history of rewrites.

\subsubsection{Counting repeated identification choices}

Every fragment is a distinct unit entity, but several fragments may have the same support, polarity, and label status. Suppose two eligible classes contain $u$ and $v$ individual fragments.  The number of ways to choose $q$ disjoint pair identifications is
\begin{equation}
  \frac{\fall{u}{q}\fall{v}{q}}{q!}.
  \label{eq:implementation-pair-multiplicity}
\end{equation}
Here
\[
  \fall{v}{q}=(v)_q
  :=v(v-1)\cdots(v-q+1)=\frac{v!}{(v-q)!},
  \qquad (v)_0:=1,
\]
is the falling factorial; it counts ordered selections of $q$ distinct
objects from $v$ available objects.
For three input classes the numerator has three falling factorials and the
denominator is again $q!$.  These formulas count occurrence-level blocks in
the branch partition, not sequences of elementary rewrites.  Every $q$ from
zero to the smallest available class size must be included.  Branches may be
combined into one coefficient only when they have the same parity and the
same materialized-shell contribution.  Choices with different parities or
different compatible-core contributions remain separate.  Every branch from
the same literal partial shell inherits the factor in
\eqref{eq:implementation-partial-shell-factor}. 

\paragraph{Example.}
Consider the partial-shell interface key
\begin{equation}
  K_{\mathrm{ex}}=
  \bigl(
    \varnothing,
    \{12,12,12,34,34\},
    \varnothing,
    \varnothing
  \bigr),
  \label{eq:implementation-multiplicity-example-intermediate}
\end{equation}
which contains three distinct anonymous minuses on $12$ and two distinct
anonymous minuses on $34$---in the shorthand of
\eqref{eq:implementation-anonymous-fragment-shorthand},
$3u_{12}^{-}$ and $2u_{34}^{-}$.  One fragment from each class may form an
anonymous plus on the complementary pair $56$.
Grouped by their residual key, the structural templates have the counts
\begin{equation}
\begin{array}{ccl}
\hline
q & \text{key-level count }n_q & \text{residual partial-shell key }K_{\mathrm{ex},q}\\
\hline
0 & 1
  & (\varnothing,\{12,12,12,34,34\},\varnothing,\varnothing)\\
1 & \fall{3}{1}\fall{2}{1}=6
  & (\varnothing,\{12,12,34\},\varnothing,\{56\})\\
2 & \fall{3}{2}\fall{2}{2}/2!=6
  & (\varnothing,\{12\},\varnothing,\{56,56\})\\
\hline
\end{array}
  \label{eq:implementation-multiplicity-example-branches}
\end{equation}
Thus retaining only the maximal choice $q=2$ would lose the unchanged
partition and the six occurrence-level partitions with exactly one
identification. 

The displayed $n_q$ records how many branches have the same interface key at this point. However, we must be careful as there may be a change in sign due to such an identification (see \cref{sec:implementation-signs}). 
Thus, we also need to record the parity of each branch (see 
\eqref{eq:implementation-branch}).  Before a
residual key is sent to the core calculation, each of its anonymous fragments receives a distinct fresh label.

\paragraph{Interpreting a branch as a shell.}
Fix a partial shell $\widetilde S$ and one admissible partition
$\mathcal P_b$ of its fragment occurrences.  Write
$K_{\mathrm{par},b}$ for the residual partial-shell key.
After applying exactly the blocks of $\mathcal P_b$, omit every element which now appears in 
all six rows. Each remaining fragment inherits the label of its
class if that class already has one; every remaining anonymous fragment
receives its own fresh label.  This produces the shell $S_b$ and its shell interface key
\[
  K':=K(S_b)
  =(K'_{\ell,-},\varnothing,K'_{\ell,+},\varnothing).
\]
The unchanged identification branch has
$K_{\mathrm{par},b}=K_0$, but its shell key $K(S_b)$ is the all-labeled
materialization of $K_0$ and need not have the same serialization.  A
nonmaximal branch similarly declares the anonymous fragments that it does not
identify to be distinct shell elements.

This is the mathematical interpretation used in the shell formula.  The
MATLAB helper does not allocate a separate all-labeled \(K(S_b)\)
object: after the identifications have been encoded in its branch state, it
passes the remaining polarity arrays to the compatible-core routine.  Giving
the survivors fresh distinct labels states explicitly which ownership
convention that computation represents.

A branch is summarized by its resulting shell key, multiplicity, and parity:
\begin{equation}
  b=(K',m,\varepsilon),
  \label{eq:implementation-branch}
\end{equation}
where $K'=K(S_b)$ is the shell interface key, $m$ the combinatorial branch
multiplicity, and $\varepsilon$ the parity change induced by the corresponding
row swaps. We then compute compute the excess, sign, and compatible-core contribution for $S_b$. 
Every admissible identification choice, including the choice to make no identifications, is included in the sum.
\paragraph{Example.}
Consider the one-column partial shell
\[
  \widetilde S=
  \begin{pmatrix}
    \alpha\\
    \alpha\\
    \beta\\
    \beta\\
    a\\
    a
  \end{pmatrix},
\]
where $a$ is labeled and $\alpha,\beta$ are anonymous.  The fragment $a$
occupies rows $56$, while $\alpha$ and $\beta$ occupy rows $12$ and $34$.
Thus the initial partial-shell interface key is
\[
  K_0=(\{56\},\{12,34\},\varnothing,\varnothing).
\]
These three minus fragments occupy disjoint row pairs that together cover all
six rows.  They may therefore be identified as one completed element, which
inherits the label $a$.  Since completed elements are omitted from an
interface key, both the residual partial-shell key and the resulting shell key
are
\[
  K'=(\varnothing,\varnothing,\varnothing,\varnothing).
\]
There is one occurrence-level choice and its parity is $+1$.  Thus
\eqref{eq:implementation-branch} gives
\[
  b=(K',1,+1).
\]
The corresponding identification is
\[
  (\alpha\text{ on }12,\;\beta\text{ on }34,\;a\text{ on }56)
  \longmapsto a.
\]

For each branch, \(m\) and \(\varepsilon\) multiply the contribution computed
from its materialized shell.  The quantities \(k_{S_b}\) and
$Weight_{Cores}(S_b)$ 
belong to that shell. 
The next section
describes the determinant-sign comparison used when the complete branch sum
is reused from the outer dictionary.

\subsection{Row-orbit cache and sign transport}
\label{sec:implementation-signs}
\label{sec:implementation-canonicalization}

\subsubsection{Sign convention and cache transport}

The underlying table sign is defined as \(\sgn(T)=\prod_{j=1}^{6}\sgn(\pi_j)\).
\Cref{def:canonical-comparison-table,def:shellcoresign} then define the
canonical comparison tables \(\operatorname{canon}(S)\) and
\(\operatorname{canon}(C)\) and the relative signs
\(\sgn(S)\) and \(\sgn(C)\).  Their factorization is stated in
\cref{cor:shell-core-relative-sign}.

The implementation evaluates the same signs by applying one recorded common
ordering to the element names and multiplying the six row inversion signs.
A common relabeling or common column permutation changes every row by the
same parity and hence contributes a sixth power, so it does not change the
completed-table sign.  The symbols \(+\) and \(-\) on fragments remain support
polarities and have no determinant-sign meaning.

For a shell $S_b$ 
with interface key \(K\) and a compatible
completed paired core \(C\), write
\[
  \varepsilon(C\mid K)\in\{\pm1\}
\]
for the relative core sign in the common element-ordering convention.  The
partial shell parity is denoted by
\(\varepsilon_{\widetilde S}\), while the branch parity is
\(\varepsilon\) in \eqref{eq:implementation-branch}.  These signs are computed
on a cache miss and are included in the branch sum.

\subsubsection{Relative sign under row-orbit reuse}

The outer support key is row-canonicalized before any identification branch is
enumerated.  Suppose a current literal partial shell \(\widetilde S\) and the
first stored literal representative \(\widetilde S_{\mathrm{ref}}\) have the
same row-orbit key \(\widehat K_{\mathrm{par}}\).  The dictionary routine
chooses a row permutation \(\pi\) carrying the current support key to the
stored support key and compares the two literal tables in the common
element-ordering convention.  The data used by the lookup are
\begin{equation}
  (\widehat K_{\mathrm{par}},\rho_{\mathrm{rel}},\pi),
  \qquad
  \rho_{\mathrm{rel}}\in\{\pm1\},\quad \pi\in S_6,
  \label{eq:implementation-canonical-parity}
\end{equation}
where \(\rho_{\mathrm{rel}}\) is the relative dictionary sign computed after
row transport and conversion to the stored element-ordering convention.  On a
cache hit, the driver defines the expression used for the current partial
shell by applying this one factor to the complete cached branch/core
aggregate:
\begin{equation}
  \mathcal A_{\mathrm{use}}
    (\widetilde S;\widetilde S_{\mathrm{ref}})
  :=
  \rho_{\mathrm{rel}}\,
  \mathcal A_{\mathrm{miss}}(\widetilde S_{\mathrm{ref}}).
  \label{eq:implementation-outer-cache-sign-transport}
\end{equation}
Thus the relative sign is part of a cache hit, not another identification
branch or another coefficient.  It is applied exactly once.  The case
calculation continues to carry its local factors and later normalization
outside this reuse.


\subsubsection{Exhaustive row-orbit normalization}

The row group has only \(6!=720\) elements, so the executable examines the
complete row orbit of each newly encountered support serialization.  For each
\(\pi\in S_6\), it:
\begin{enumerate}
  \item Applies \(\pi\) to every stored row pair;
  \item Sorts the two entries of each resulting pair;
  \item Retains the four labeled/anonymous and minus/plus blocks; and
  \item Sorts the entries within each block and serializes the result.
\end{enumerate}
The lexicographically least string is
\(\widehat K_{\mathrm{par}}\).  The auxiliary map \texttt{key\_dict} stores
the association between an encountered support string and this row-orbit key.
For the first literal partial shell under that key, the executable evaluates
the complete aggregate and stores both that value and the literal reference
table, as displayed in \eqref{eq:implementation-outer-cache}.
Later literal partial shells with the same row-orbit key use
\eqref{eq:implementation-outer-cache-sign-transport}.  No automorphism group
of a materialized final shell is computed here. 

\subsubsection{Example: a negative row-transport sign}
Four-mark case~17 supplies two different 
partial shells,
\begin{equation}
  \widetilde S_{\mathrm{ref}}=
  \begin{pmatrix}
    -1& 3&-4\\
    -1& 3& {\times}2\\
     {\times}1& 3&-3\\
     2&-2&-4\\
     2&-2& {\times}3\\
     2& {\times}1&-3
  \end{pmatrix},
  \qquad
  \widetilde S=
  \begin{pmatrix}
    -1&-2& {\times}2\\
    -1& 3&-4\\
     {\times}1& 3&-3\\
     2&-2& {\times}3\\
     2& {\times}1&-3\\
     2& 3&-4
  \end{pmatrix}.
  \label{eq:implementation-case17-cache-tables}
\end{equation}
Positive integers denote labeled covered elements.  Negative integers are
temporary names for anonymous pairs, not determinant signs.  Suppressing the
equality-class identifiers of labeled fragments, their support projections are
\begin{align}
  K_{\mathrm{par}}(\widetilde S_{\mathrm{ref}})
    &=\bigl(\{36\},\{12,14,36,45\},\{13,46\},\varnothing\bigr),
    \label{eq:implementation-case17-key-reference}\\
  K_{\mathrm{par}}(\widetilde S)
    &=\bigl(\{35\},\{12,14,26,35\},\{15,23\},\varnothing\bigr).
    \label{eq:implementation-case17-key-current}
\end{align}
The regression implementation selects the row reordering
\begin{equation}
  \pi=(2,6,3,1,4,5),
  \qquad
  T_\pi A=A[\pi,:].
  \label{eq:implementation-case17-row-action}
\end{equation}
Thus an occurrence on old row \(r\) moves to position \(\pi^{-1}(r)\).
On the three nonempty support blocks of
\(K_{\mathrm{par}}(\widetilde S)\), respectively, this gives
\[
  35\longmapsto36,\qquad
  \{12,14,26,35\}\longmapsto\{12,14,36,45\},\qquad
  \{15,23\}\longmapsto\{13,46\}.
\]
Hence \(T_\pi\) carries the current support key to the stored reference key.
This fact alone does not determine the relative sign, because each literal
table is first put into its own canonical element convention.

After canonicalizing element names within the two literal tables, the
comparison arrays produced by the implementation are
\begin{equation}
  A_{\widetilde S}=
  \begin{pmatrix}
    -1&-2& 2\\
    -1& 3&-4\\
     3&-3& 1\\
    -2& 3& 2\\
     2&-3& 1\\
     3& 2&-4
  \end{pmatrix},
  \qquad
  A_{\mathrm{ref}}=
  \begin{pmatrix}
    -1&-4& 3\\
    -1& 2& 3\\
    -3& 1& 3\\
     2&-4&-2\\
     2&-2& 3\\
     2&-3& 1
  \end{pmatrix}.
  \label{eq:implementation-case17-comparison-tables}
\end{equation}
Applying \(\pi\) to \(A_{\widetilde S}\) and then relabeling in the reference
convention gives
\begin{equation}
  A_{\widetilde S\to\mathrm{ref}}=
  \begin{pmatrix}
    -4& 3&-1\\
     3& 2&-1\\
     3&-3& 1\\
    -4&-2& 2\\
    -2& 3& 2\\
     2&-3& 1
  \end{pmatrix}.
  \label{eq:implementation-case17-transported-table}
\end{equation}

For any six-row array \(B\) written in a fixed element-ordering convention,
let
\[
  \chi(B)
  =
  (-1)^{\sum_{r=1}^{6}\operatorname{inv}(B_r)}
\]
be the product of its six row inversion signs.  For the two literal records
in \eqref{eq:implementation-case17-cache-tables}, this gives
\begin{align*}
  \chi(\widetilde S_{\mathrm{ref}})
  &=(-1)^{2+1+2+3+1+3}=(-1)^{12}=+1,\\
  \chi(\widetilde S)
  &=(-1)^{1+2+2+1+3+2}=(-1)^{11}=-1.
\end{align*}
Thus their displayed-array inversion parities are opposite.  These are
auxiliary parities of the literal partial-shell arrays, not yet signs of
individual compatible cores.
Once an identification branch and a compatible paired core \(C\) have been
fixed, the relative core sign is computed by the same row-inversion
comparison:
\[
  \varepsilon(C\mid K)
  =
  \chi(C)\,
  \chi\bigl(\operatorname{canon}(C)\bigr)
  =
  (-1)^{
    \sum_{r=1}^{6}
    \left(
      \operatorname{inv}(C_r)
      +
      \operatorname{inv}(\operatorname{canon}(C)_r)
    \right)
  },
\]
with both arrays written in the common element-ordering convention.  The
complete row-by-row calculation for the two literal arrays and the comparison
arrays used in the cache transport is shown in
\cref{tab:implementation-case17-inversions}.
\begin{table}[htbp]
\centering
\caption{Row inversion calculation for the negative case-17 transport sign.}
\label{tab:implementation-case17-inversions}
\small
\begin{tabular}{@{}lccr@{}}
\toprule
Array & Row inversion counts & Row signs & \(\chi\)\\
\midrule
\(A_{\widetilde S}\)
  & \((1,2,2,1,2,3)\) & \((-,+,+,-,+,-)\) & \(-1\)\\
\(\widetilde S\)
  & \((1,2,2,1,3,2)\) & \((-,+,+,-,-,+)\) & \(-1\)\\
\(\widetilde S_{\mathrm{ref}}\)
  & \((2,1,2,3,1,3)\) & \((+,-,+,-,-,-)\) & \(+1\)\\
\(A_{\mathrm{ref}}\)
  & \((1,0,0,2,1,2)\) & \((-,+,+,+,-,+)\) & \(+1\)\\
\(A_{\widetilde S\to\mathrm{ref}}\)
  & \((1,3,2,0,1,2)\) & \((-,-,+,+,-,+)\) & \(-1\)\\
\bottomrule
\end{tabular}
\end{table}

The current shell agrees in relative sign with its own comparison array,
since
\(\chi(A_{\widetilde S})\chi(\widetilde S)=(-1)(-1)=+1\).
Likewise,
\(\chi(A_{\mathrm{ref}})\chi(\widetilde S_{\mathrm{ref}})=+1\).
The transported comparison contributes the only minus sign:
\[
  \chi(A_{\mathrm{ref}})
  \chi(A_{\widetilde S\to\mathrm{ref}})
  =(+1)(-1)=-1.
\]
Multiplying these three comparisons, the two occurrences of
\(\chi(A_{\mathrm{ref}})\) cancel because their square is \(1\).  Therefore
the four-factor expression used by the dictionary routine is
\begin{align}
  \rho_{\mathrm{rel}}
  &=
  \chi(A_{\widetilde S})\,
  \chi(\widetilde S)\,
  \chi(\widetilde S_{\mathrm{ref}})\,
  \chi(A_{\widetilde S\to\mathrm{ref}})
  \notag\\
  &=(-1)(-1)(+1)(-1)=-1.
  \label{eq:implementation-case17-cache-sign}
\end{align}
Because each record in
\eqref{eq:implementation-case17-cache-tables} emits several identification
branches and compatible cores, neither record has one standalone core sign.
The corresponding statement at the level of the complete signed core
calculation is nevertheless exact.  A cache-free run on each record emits
twenty branches and gives
\begin{equation}
  \begin{aligned}
  \mathcal A_{\mathrm{miss}}(\widetilde S_{\mathrm{ref}})
    &=2R_1(t)+64R_2(t)+1200R_3(t)+11520R_4(t),\\
  \mathcal A_{\mathrm{miss}}(\widetilde S)
    &=-2R_1(t)-64R_2(t)-1200R_3(t)-11520R_4(t),
  \end{aligned}
  \label{eq:implementation-case17-direct-aggregates}
\end{equation}
where the derivative blocks \(R_k(t)\) are defined in
\eqref{eq:implementation-derivative-block}.  In particular, the single
unchanged branch with \(k=4\) has net signed compatible-core weights
\(+11520\) and \(-11520\), respectively.  Thus the directly recomputed
branch/core aggregates have opposite signs, exactly as
\(\rho_{\mathrm{rel}}=-1\) predicts.

The two row-isomorphic records may therefore share one outer row-orbit cache
entry, but the contribution of \(\widetilde S\) must acquire a minus sign
relative to the stored aggregate.

The displayed matrices are initial literal partial-shell records, and
\(\widetilde S_{\mathrm{ref}}\) is the first cached representative rather than
an asserted lexicographic minimum among literal tables.  The executable
applies the relative sign \(-1\) to the complete cached branch/core aggregate;
it does not repeat this comparison after each identification branch.  The
example therefore isolates the sign calculation performed on an outer-cache
hit.

The next section describes the compatible-core calculation performed for
each materialized branch on a cache miss.

\subsection{Computing the signed compatible-core weight}
\label{sec:implementation-core-computation}
\label{sec:implementation-core-dictionary}
\label{sec:implementation-core-enumeration}

For each identification branch $b$, it remains to compute $Weight_{Cores}(S)$. 
At this point every surviving fragment represents a
distinct shell element, as explained in
\cref{sec:implementation-identifications}.  The implementation therefore
passes only the fifteen lists of support polarities to the compatible-core
calculation and introduces one fresh placeholder for each fragment.  The
branch multiplicity, accumulated shell sign, inherited symmetry factor, and
local factors \(G(c)\), as well as the derivative block, remain outside this
calculation.

Before carrying out that calculation, the accelerated implementation
serializes the fifteen residual polarity lists and queries the inner
compatible-core dictionary.  Because all surviving fragments now represent
distinct elements, their temporary placeholder names are immaterial to this
lookup.  A hit reuses the stored net weight; a miss evaluates $Weight_{Cores}(S)$
and stores it for later branches with the same
residual support configuration.  This is distinct from the outer dictionary,
whose value is the complete sum over every identification branch of a partial
shell.

The core-weight dictionary is the runtime cache; its miss evaluator consults
a separate fixed table of pair-demand decompositions.  The compatible-core
machinery therefore has an offline precomputation and a per-branch evaluation
stage.  The precomputation tabulates how multisets of row-pair demands can be
divided into paired-core columns.  On a core-weight miss, the per-branch stage
completes the complementary rows of the fresh placeholders, queries the fixed
table, sums the signed multiplicities, and returns the value to be stored in
the runtime cache.

This inner hit--miss flow is summarized in
\cref{fig:implementation-core-weight-dictionary}.

\subsubsection{Precomputing pair-demand decompositions}
\label{sec:implementation-column-decompositions}

A paired core column is a perfect matching of the six rows: it consists of three disjoint row pairs whose union is $[6]$.  There are $(6-1)!!=15$ such matching types.  Denote them by
\[
  P_1<P_2<\cdots<P_{15},
\]
where the three pairs within each $P_j$ are first sorted in the fixed row-pair order and the resulting triples are then ordered lexicographically.
For example,
\[
  \begin{aligned}
    P_1&=\{12,34,56\},\qquad
    P_2=\{12,35,46\},\qquad
    P_3=\{12,36,45\},\\
    P_4&=\{13,24,56\},\qquad
    \ldots,\qquad
    P_{15}=\{16,25,34\}.
  \end{aligned}
\]
Thus \(P_1,P_2,P_3\) exhaust the matchings beginning with the pair \(12\);
the lexicographic ordering then continues with the matchings beginning with
\(13\), and so on.

In the implementation, these matching types are used to build a finite
lookup table before any compatible-core weight is evaluated.  Adjoin a blank
symbol \(P_{16}=\varnothing\), placed after the fifteen matching types.  Six
nested loops make the nondecreasing choices
\[
  1\leq i_1\leq i_2\leq\cdots\leq i_6\leq16,
\]
with the all-blank choice omitted.  After the blank symbols are deleted, one
obtains a multiset of \(h\) matching types with \(1\leq h\leq6\).  The
nondecreasing order ensures that every such multiset is generated exactly
once.

For each choice, concatenate the three row pairs in every selected matching
and sort the resulting \(3h\) pair occurrences in the fixed row-pair order.
The resulting untagged multiset \(Q\) is the lookup key.  Let \(q_p\) be the
multiplicity of \(p\in\mathcal R_2\) in \(Q\), and let
\(d=(d_1,\ldots,d_{15})\in\mathbb Z_{\geq0}^{15}\) record how many selected
columns have each matching type.  The entries stored under \(Q\) are exactly
the vectors \(d\) satisfying
\begin{equation}
  q_p=\sum_{j=1}^{15}d_j\,\mathbf 1_{\{p\in P_j\}}
  \quad(p\in\mathcal R_2),
  \qquad
  1\leq h=\sum_{j=1}^{15}d_j\leq6.
  \label{eq:implementation-untagged-decomposition}
\end{equation}
Write \(\mathcal D(Q)\) for this stored list of decomposition vectors.  Every
stored key necessarily satisfies \(|Q|=3h\), and every row occurs in exactly
\(h\) of its pair occurrences:
\[
  \sum_{p\ni r}q_p=h
  \qquad(r\in[6]).
\]
Consequently, a demand multiset failing either condition has no lookup-table
entry.

Alongside each decomposition, the precomputation stores the multiplicity
\[
  \nu(d,Q)
  =
  \frac{h!}{\prod_{j=1}^{15}d_j!}
  \prod_{p\in\mathcal R_2}q_p!.
\]
The first factor counts the distinct placements of the multiset of matching
types into \(h\) ordered core-column positions.  The second counts
permutations among repeated occurrences of the same row-pair support.  After
the fresh placeholders are introduced, these become their assignments to the
corresponding slots.  Thus a later sign calculation may evaluate one
representative of such a class and multiply by its stored exact cardinality.

\paragraph{Example: one support key with two decompositions.}
The following \(h=4\) example illustrates every step of the precomputation.
\begin{enumerate}
  \item Two choices made by the six nondecreasing loops are
  \[
    (1,3,5,9,16,16)
    \qquad\text{and}\qquad
    (1,2,6,8,16,16).
  \]
  After the two blank entries are deleted, their matching multisets are
  \begin{align*}
    \mathcal P_A
    &=
    \bigl\{
      \{12,34,56\},
      \{12,36,45\},
      \{13,25,46\},
      \{14,26,35\}
    \bigr\},\\
    \mathcal P_B
    &=
    \bigl\{
      \{12,34,56\},
      \{12,35,46\},
      \{13,26,45\},
      \{14,25,36\}
    \bigr\}.
  \end{align*}

  \item Concatenating the pairs in either line and sorting them gives the
  same support multiset
  \[
    Q
    =
    \{12,12,13,14,25,26,34,35,36,45,46,56\}.
  \]
  Its serialized lookup key is
  \[
    \texttt{121213142526343536454656}.
  \]

  \item The entry under this key retains two decomposition vectors.  Their
  nonzero coordinates are
  \[
    d^{(A)}_1=d^{(A)}_3=d^{(A)}_5=d^{(A)}_9=1,
    \qquad
    d^{(B)}_1=d^{(B)}_2=d^{(B)}_6=d^{(B)}_8=1.
  \]
  Equivalently, the stored split strings are
  \[
    \begin{aligned}
      s_A&=\texttt{123456123645132546142635},\\
      s_B&=\texttt{123456123546132645142536}.
    \end{aligned}
  \]

  \item In either decomposition the four matching types are distinct, so
  the column-placement factor is \(4!=24\).  The pair \(12\) occurs twice
  in \(Q\), while every other pair occurs once, so the repeated-support
  factor is \(2!=2\).  Therefore
  \[
    \nu(d^{(A)},Q)
    =
    \nu(d^{(B)},Q)
    =
    24\cdot2
    =
    48.
  \]
\end{enumerate}
A direct run of the precomputation returned the displayed lookup key, exactly
the two split strings \(s_A,s_B\), and the value \(48\) for each.  Thus
sorting the pair demands does not discard alternative ways of grouping them
into core columns; it places both decompositions under one key for the signed
calculation in the next subsection.

This lookup deliberately omits placeholder names; it records only which three
row pairs share each core column.  The all-blank case is not stored, and the
caller handles the empty core separately by setting
\(W_{\mathrm{core}}(\varnothing)=1\).  The present table covers only
\(1\leq h\leq6\); a computation requiring more than six core columns would
require the precomputation to be extended.  The next stage introduces the
fresh placeholders and evaluates the corresponding core signs.

\subsubsection{The signed compatible-core weight}
\label{sec:implementation-signed-core-weight}

After an identification branch has been fixed, scan its fifteen row-pair
lists in the order \(12,13,\ldots,56\) and give every residual fragment a
fresh consecutive placeholder.  Literal element names are no longer needed.
A plus fragment \(a^+_{ij}\), which occupies the four rows complementary to
\(ij\), has the single forced core demand \(ij\).  A minus fragment
\(a^-_{ij}\), which occupies rows \(ij\), must be completed on the other four
rows; their three perfect matchings give the three possible pairs of core
demands.  Let \(\Omega_b\) be the Cartesian product of these three choices
over the minus fragments of branch \(b\).  Plus-fragment demands introduce no
choice.

For \(\omega\in\Omega_b\), concatenate all selected pair demands and sort
them in the fixed row-pair order.  Denote the resulting untagged multiset by
\(Q_\omega\).  The lookup from \cref{sec:implementation-column-decompositions}
returns every \(d\in\mathcal D(Q_\omega)\) together with
\(\nu(d,Q_\omega)\).  For each returned decomposition, the implementation
constructs one representative paired core \(C_{\omega,d}\): for each support
\(p\), queue the fresh placeholders whose chosen completion contains \(p\);
then traverse the pair slots of the stored column decomposition and consume
the next placeholder from the corresponding queue.

Independently, the same fresh placeholders form a fixed comparison array
\(C_{0,b}\).  Each placeholder is placed once in every row missing from its
shell fragment, with row positions filled in the fixed scan order.  As in
\cref{def:canonical-comparison-table}, this array is used only for parity and
need not itself display the selected pairings.  With \(\chi\) denoting the
product of the six row inversion signs from
\cref{sec:implementation-signs}, the representative relative sign is
\[
  \varepsilon_{\omega,d}
  =
  \chi(C_{\omega,d})\chi(C_{0,b}).
\]
The compatible-core routine returns
\begin{equation}
  W_{\mathrm{core}}(S_b)
  =
  \sum_{\omega\in\Omega_b}
  \ \sum_{d\in\mathcal D(Q_\omega)}
  \nu(d,Q_\omega)\,\varepsilon_{\omega,d}.
  \label{eq:implementation-core-weight-dictionary-sum}
\end{equation}
If \(Q_\omega\) has no dictionary entry, that completion contributes zero.
The caller assigns the empty residual record the value
\(W_{\mathrm{core}}(\varnothing)=1\).

Only the representative \(C_{\omega,d}\) is constructed.  The factor
\(\nu(d,Q_\omega)\) counts the other ordered column placements and
equal-support placeholder assignments represented by the same decomposition.
They have the same relative sign: a common permutation of the core columns
acts in all six rows, while exchanging two placeholders on the same pair
support exchanges them in two rows.  Both changes preserve the total parity.
Thus \eqref{eq:implementation-core-weight-dictionary-sum} is the compressed
evaluation of the net minimal-core weight defined in
\cref{def:netweight}; the routine does not enumerate the individual cores
counted by \(\nu(d,Q_\omega)\).

\paragraph{Example.}
Suppose a branch leaves one minus fragment \(\alpha^-_{12}\) and one plus
fragment \(b^+_{12}\), with distinct fresh placeholders \(\alpha\) and \(b\).
The plus fragment forces the core demand \(12\), while the four missing rows
of \(\alpha\) have the three pairings
\[
  \{34,56\},\qquad \{35,46\},\qquad \{36,45\}.
\]
The three completion choices therefore give
\begin{align*}
  &\{(b,12),(\alpha,34),(\alpha,56)\},\\
  &\{(b,12),(\alpha,35),(\alpha,46)\},\\
  &\{(b,12),(\alpha,36),(\alpha,45)\}.
\end{align*}
After the placeholders are suppressed, each line is one perfect matching of
the six rows.  Its dictionary entry has one single-column decomposition with
\(\nu=1\), and the representative sign is \(+1\).  Hence
\[
  W_{\mathrm{core}}(S_b)=1+1+1=3.
\]
The three paired cores have the same displayed element-name column
\((b,b,\alpha,\alpha,\alpha,\alpha)^{\mathsf T}\), but they remain distinct
because, as specified in \cref{def:mincompatible}, the pairing data are part
of the core.  As a larger check, three minus fragments on \(12,34,56\) give
\(W_{\mathrm{core}}=48\), agreeing with the direct signed count of \(64\)
minimal cores in \cref{ex:mincoresfirstexample}.

\begin{figure}[H]
\centering
\begin{tikzpicture}[
  cache stage/.style={
    draw=diagramgray,
    fill=diagramlightblue,
    rounded corners=2pt,
    align=center,
    inner xsep=8pt,
    inner ysep=5pt,
    font=\small
  },
  cache query/.style={
    diamond,
    aspect=2.25,
    draw=diagramgray,
    fill=diagramlightorange,
    align=center,
    inner xsep=4pt,
    inner ysep=3pt,
    font=\small
  },
  fixed table/.style={
    cache stage,
    fill=white,
    densely dashed
  },
  cache flow/.style={-{Latex[length=2mm]},draw=diagramblue,thick},
  cache branch/.style={font=\footnotesize,fill=white,inner sep=1.5pt}
]
  \node[cache stage] (residual)
    {serialize the residual\\polarity-list key};
  \node[cache query,below=8mm of residual] (query)
    {query core-weight\\dictionary};
  \node[cache stage,below left=12mm and 18mm of query] (hit)
    {reuse the stored\\\(W_{\mathrm{core}}(S_b)\)};
  \node[cache stage,below right=12mm and 18mm of query] (demands)
    {enumerate completion choices\\\(\omega\in\Omega_b\)};
  \node[fixed table,below=7mm of demands] (pairtable)
    {fixed pair-demand table\\retrieve
     \(\mathcal D(Q_\omega)\) and \(\nu(d,Q_\omega)\)};
  \node[cache stage,below=7mm of pairtable] (signed)
    {construct representative cores and sum\\
     \(\nu(d,Q_\omega)\varepsilon_{\omega,d}\)};
  \node[cache stage,below=7mm of signed] (store)
    {store and return\\\(W_{\mathrm{core}}(S_b)\)};

  \draw[cache flow] (residual) -- (query);
  \draw[cache flow] (query) -- node[cache branch,above left] {hit} (hit);
  \draw[cache flow] (query) -- node[cache branch,above right] {miss} (demands);
  \draw[cache flow] (demands) -- (pairtable);
  \draw[cache flow] (pairtable) -- (signed);
  \draw[cache flow] (signed) -- (store);
\end{tikzpicture}
\caption{Inner core-weight dictionary lookup for a materialized shell \(S_b\).
A hit returns the stored net signed number of minimal compatible cores.  On a
miss, the routine queries the fixed pair-demand table, sums the signed
decomposition multiplicities, and stores \(W_{\mathrm{core}}(S_b)\).  The
empty residual record is handled separately by
\(W_{\mathrm{core}}(\varnothing)=1\).}
\label{fig:implementation-core-weight-dictionary}
\end{figure}

\subsubsection{Assembling and caching the partial-shell expression}
\label{sec:implementation-core-memoized-lookup}
\label{sec:implementation-output}

After the identification stage, the implementation visits each generated
branch state once.  Let \(a\) be the number of columns of the originating
literal partial shell, and for a branch \(b\) let \(e_b\) be its current
number of shell elements.  The stored branch fields also contain its
identification multiplicity \(m_b\) and one accumulated sign
\(\sigma_b\).  This sign is initialized with the literal shell sign and is
updated by the signs of the selected identification operations.  Thus the
implementation carries their product rather than two separate sign objects.
The derivative order is
\begin{equation}
  k_b=e_b-a.
  \label{eq:implementation-branch-excess}
\end{equation}
This is the shell excess from \cref{def:shell-excess}.

For each branch, the program discards the fragment-label column and passes
its fifteen polarity lists to the compatible-core routine.  If every list is
empty, it sets the core weight to \(1\); otherwise it queries the inner
compatible-core dictionary.  A cache miss evaluates $Weight_{Cores}(S)$
by
\eqref{eq:implementation-core-weight-dictionary-sum} and stores the result,
whereas a hit reuses that exact integer.  Write
\begin{equation}
  R_k(t)
  =
  \frac{\ZeroMark(t)}{\GaussianCore(\xt)}
  \frac{\xt^k\GaussianCore^{(k)}(\xt)}
       {\displaystyle\prod_{j=1}^{k}j(j+2)(j+4)},
  \qquad R_0(t)=\ZeroMark(t),
  \label{eq:implementation-derivative-block}
\end{equation}
where the product is \(1\) when \(k=0\).  This is the floating-factor and
derivative block in the shell formula of
\cref{thm:Ftformula,lem:coretotalweight}.  The program appends the
branch term
\[
  m_b\,\sigma_b\,W_{\mathrm{core}}(S_b)\,R_{k_b}(t)
\]
to the partial-shell expression.  Hence the complete value computed on an
outer-cache miss is
\begin{equation}
  \mathcal A_{\mathrm{miss}}(\widetilde S)
  =
  \sum_{b\in\mathcal B(\widetilde S)}
    m_b\,
    \sigma_b\,
    W_{\mathrm{core}}(S_b)\,
    R_{k_b}(t).
  \label{eq:implementation-outer-aggregate}
\end{equation}
Here \(\mathcal B(\widetilde S)\) is the list of branch states produced by the
identification loops.  Thus the returned parenthesized expression already
contains every branch multiplicity, accumulated sign, signed core weight, and
excess-dependent derivative block.

On the first occurrence of a row-orbit support key, the caller stores this
complete expression together with the corresponding literal partial shell.
On a later occurrence it bypasses the identification and core routines,
retrieves the stored expression, and applies the single relative sign from
\eqref{eq:implementation-outer-cache-sign-transport}.  The current product of
local column factors \(\prod_cG(c)\) is attached afterward, and repeated
output records with the same constructed key are grouped by an integer
multiplicity.  The mark-position and partial-shell symmetry factor from
\cref{sec:implementation-paper-symmetries}, which realizes the orbit
normalization in \cref{thm:Ftformula}, belongs to the later
aggregation over cases with the same number of marks and is not part of the
cached compatible-core expression.


\appendix
\section{Explicit Analysis for One Mark}\label{sec:explicitonemark}
In this appendix, we use our framework to explicitly compute the total contribution of tables with one mark to $F_6(t)$. For one mark, there is a single mark placement table $P$. For this table $P$, $\frac{6!}{|Aut(P)|} = 6$. 
\begin{center}
\begin{tabular}{|c|}
\hline
$\times a$ \\ \hline
  \\ \hline
  \\ \hline
  \\ \hline
  \\ \hline
  \\ \hline
\end{tabular}
\end{center}
Before enumerating the possible shells $S$ which are consistent with $P$, we first compute the total weight $Weight_{Cores}(S)$ of the minimum compatible cores for these shells. When there is one mark, there are two cases which appear.
\begin{enumerate}
\item $S$ has no $+$ or $-$ fragments. In this case, the unique minimum compatible core is the empty core with no columns. 
\item $S$ has an $a_{ij}^{+}$ fragment and a $b_{ij}^{-}$ fragment for some distinct elements $a,b \in [n]$ and rows $i < j \in [6]$ (in this case, $a$ must be covered and $b$ must be uncovered, though this is irrelevant here). In this case, the minimum compatible cores consist of one column which has an $a$ in rows $i,j$ and a $b$ in rows $[6] \setminus \{i,j\}$. There are three such minimum compatible cores $C$ as there are three possible pairings for the four $a$ and all of these cores have positive sign. Thus, $Weight_{Cores}(S) = 3$.

Below, we illustrate the table for these minimum compatible cores when $i = 5$ and $j = 6$.
\begin{center}
\begin{tabular}{|c|}
\hline
$b$ \\ \hline
$b$ \\ \hline
$b$ \\ \hline
$b$ \\ \hline
$a$ \\ \hline
$a$ \\ \hline
\end{tabular}
\end{center}
\end{enumerate}
\subsection{Possible Shells with One Mark}
We have the following possible shells:
\begin{enumerate}
\item \ 
\begin{center}
\begin{tabular}{|c|}
\hline
$\times a$ \\ \hline
$a$ \\ \hline
$a$ \\ \hline
$a$ \\ \hline
$a$ \\ \hline
$a$ \\ \hline
\end{tabular}
\end{center}
where the $a$ in rows 2-5 are a block of size $5$. The contribution from this case is
\[
6G(\mathbf{c})O(t) = 6{m_1}t(\mu_5 - 10\mu_3)O(t)
\]
\item There are $10$ shells isomorphic to 
\begin{center}
\begin{tabular}{|c|}
\hline
$\times a$ \\ \hline
$a$ \\ \hline
$a$ \\ \hline
$a$ \\ \hline
\cellcolor[HTML]{FD6864}$a$\\ \hline
\cellcolor[HTML]{FD6864}$a$\\ \hline
\end{tabular}
\end{center}
which are consistent with $P$. The contribution from these shells is
\[
60G(\mathbf{c})O(t) = \frac{60m_1{\mu_3}t}{(1-(\mu_4 - 3)t)(1 + {\mu_3^2}t)}O(t)
\]
\item There are $10$ shells isomorphic to 
\begin{center}
\begin{tabular}{|c|}
\hline
$\times a$ \\ \hline
$a$ \\ \hline
$a$ \\ \hline
$a$ \\ \hline
$\beta$ \\ \hline
$\beta$ \\ \hline
\end{tabular}
\end{center}
which are consistent with $P$. The contribution from these shells is
\[
60G(\mathbf{c})\frac{O(t)}{N(t')}\cdot\frac{t'N^{(1)}(t')Weight_{Cores}(S)}{15} = \frac{12m_1{\mu_3}t}{(1-(\mu_4 - 3)t)(1 + {\mu_3^2}t)}O(t)\frac{t'N^{(1)}(t')}{N(t')}.
\]
\end{enumerate}
\subsection{Final Result for One Mark}
The final result for one mark is 
\begin{align*}
&= 6{m_1}t(\mu_5 - 10\mu_3)O(t) + \frac{60m_1{\mu_3}tO(t)}{(1-(\mu_4 - 3)t)(1 + {\mu_3^2}t)} + \frac{12m_1{\mu_3}tO(t)}{(1-(\mu_4 - 3)t)(1 + {\mu_3^2}t)}\frac{t'N^{(1)}(t')}{N(t')} \\
&= 6{m_1}t(\mu_5 - 10\mu_3)O(t) + \frac{60m_1{\mu_3}tO(t)}{(1-(\mu_4 - 3)t)(1 + {\mu_3^2}t)}\left(1 + \frac{t'N^{(1)}(t')}{5N(t')}\right)
\end{align*}
Note that $1 + \frac{t'N^{(1)}(t')}{5N(t')}$ is the factor corresponding to the possible identifications when we start with an $a_{ij}^{+}$ fragment and a $u_{ij}^{-}$ fragment and this is stored in the shell dictionary.
\section{Explicit Analysis for Two Marks}\label{sec:explicittwomarks}
In this appendix, we use our framework to explicitly compute the total contribution of tables with two marks to $F_6(t)$. For two marks, there are three mark placement tables. 
\begin{enumerate}
\item Letting $P_1$ be the mark placement table shown below, $|Aut(P_1)| = 2! * 4! = 48$ so $\frac{6!}{|Aut(P_1)} = 15$. 
\begin{center}
\begin{tabular}{|c|}
\hline
$\times a$ \\ \hline
$\times a$  \\ \hline
  \\ \hline
  \\ \hline
  \\ \hline
  \\ \hline
\end{tabular}
\end{center}
\item Letting $P_2$ be the mark placement table shown below, $|Aut(P_1)| = 2! * 4! = 48$ so $\frac{6!}{|Aut(P_2)} = 15$. 
\begin{center}
\begin{tabular}{|c|}
\hline
$\times a$ \\ \hline
$\times b$  \\ \hline
  \\ \hline
  \\ \hline
  \\ \hline
  \\ \hline
\end{tabular}
\end{center}
\item Letting $P_3$ be the mark placement table shown below, $|Aut(P_3)| = 2! * 4! = 48$ so $\frac{6!}{|Aut(P_3)} = 15$. 
\begin{center}
\begin{tabular}{|c|c|}
\hline
$\times a$ & \\ \hline
 & $\times b$  \\ \hline
 & \\ \hline
 & \\ \hline
 & \\ \hline
 & \\ \hline
\end{tabular}
\end{center}
\end{enumerate}
\subsection{Minimum Compatible Cores for Shells With Two Marks}
Before enumerating the possible shells $S$ which are consistent with $P_1$, $P_2$, and $P_3$, we first compute the total weight $Weight_{Cores}(S)$ of the minimum compatible cores for these shells. When there are two marks, there are several cases which appear. Note that when discussing which cores are isomorphic to each other, we consider permutations of the rows and columns but not of the elements.
\begin{enumerate}
\item $S$ has no $+$ or $-$ fragments. In this case, the unique minimum compatible core is the empty core with no columns. 
\item $S$ has an $a_{ij}^{+}$ fragment and a $b_{ij}^{-}$ fragment for some distinct elements $a,b \in [n]$ and rows $i < j \in [6]$. In this case, as before, $Weight_{Cores}(S) = 3$.
\item $S$ has an $a_{ij}^{+}$ fragment, a $b_{ij}^{+}$ fragment, a $c_{ij}^{-}$ fragment, and a $d_{ij}^{-}$ fragment for some distinct elements $a,b,c,d \in [n]$ and rows $i < j \in [6]$. When $i = 5$ and $j = 6$, we have the following tables for the minimum compatible cores:
\begin{enumerate}
\item[1.] There are $4$ tables isomorphic to 
\begin{center}
\begin{tabular}{|c|c|}
\hline
$c$ & $d$ \\ \hline
$c$ & $d$ \\ \hline
$c$ & $d$ \\ \hline
$c$ & $d$ \\ \hline
$a$ & $b$ \\ \hline
$a$ & $b$ \\ \hline
\end{tabular}
\end{center}
as we can swap the positions of $a$ and $b$ and swap the positions of $c$ and $d$. Each of these tables has $9$ possible pairings and has positive sign.
\item[2.] There are $12$ tables isomorphic to 
\begin{center}
\begin{tabular}{|c|c|}
\hline
$c$ & $d$ \\ \hline
$c$ & $d$ \\ \hline
$d$ & $c$ \\ \hline
$d$ & $c$ \\ \hline
$a$ & $b$ \\ \hline
$a$ & $b$ \\ \hline
\end{tabular}
\end{center}
as we can swap the positions of $a$ and $b$ and choose which two of the first four rows have a $c$ in the first column. Each of these tables has $1$ possible pairing and has positive sign.
\end{enumerate}
Thus, for this case, $Weight_{Cores}(S) = 4*9 + 12*1 = 48$.
\item $S$ has an $a_{ij}^{+}$ fragment, a $b_{ik}^{+}$ fragment, a $c_{ij}^{-}$ fragment, and a $d_{ik}^{-}$ fragment for some distinct elements $a,b,c,d \in [n]$ and distinct rows $i,j,k \in [6]$. When $i = 4$, $j = 5$, and $k = 6$, we have the following tables for the minimum compatible cores:
\begin{enumerate}
\item[1.] There are $2$ tables isomorphic to 
\begin{center}
\begin{tabular}{|c|c|}
\hline
$c$ & $d$ \\ \hline
$c$ & $d$ \\ \hline
$c$ & $d$ \\ \hline
$a$ & $b$ \\ \hline
$a$ & $d$ \\ \hline
$c$ & $b$ \\ \hline
\end{tabular}
\end{center}
as we can swap the two columns. Each of these tables has $9$ possible pairings and has positive sign.
\item[2.] There are $6$ tables isomorphic to 
\begin{center}
\begin{tabular}{|c|c|}
\hline
$c$ & $d$ \\ \hline
$d$ & $c$ \\ \hline
$d$ & $c$ \\ \hline
$a$ & $b$ \\ \hline
$a$ & $d$ \\ \hline
$c$ & $b$ \\ \hline
\end{tabular}
\end{center}
as we can swap the two columns and we can permute the first three rows. Each of these tables has $1$ possible pairing and has positive sign.
\end{enumerate}
Thus, for this case, $Weight_{Cores}(S) = 2*9 + 6*1 = 24$.
\item $S$ has an $a_{ij}^{+}$ fragment, a $b_{kl}^{+}$ fragment, a $c_{ij}^{-}$ fragment, and a $d_{kl}^{-}$ for some distinct elements $a,b,c,d \in [n]$ and some rows $i,j,k,l \in [6]$ such that $i,j,k,l$ are distinct, $i < j$, and $k < l$.  When $i = 3$, $j = 4$, $k = 5$, and $l = 6$, we have the following tables for the minimum compatible cores:
\begin{enumerate}
\item[1.] There are $2$ tables isomorphic to 
\begin{center}
\begin{tabular}{|c|c|}
\hline
$c$ & $d$ \\ \hline
$c$ & $d$ \\ \hline
$a$ & $d$ \\ \hline
$a$ & $d$ \\ \hline
$c$ & $b$ \\ \hline
$c$ & $b$ \\ \hline
\end{tabular}
\end{center}
as we can swap the two columns. Each of these tables has $9$ possible pairings and has positive sign.
\item[2.] There are $2$ tables isomorphic to 
\begin{center}
\begin{tabular}{|c|c|}
\hline
$d$ & $c$ \\ \hline
$d$ & $c$ \\ \hline
$a$ & $d$ \\ \hline
$a$ & $d$ \\ \hline
$c$ & $b$ \\ \hline
$c$ & $b$ \\ \hline
\end{tabular}
\end{center}
as we can swap the two columns. Each of these tables has $1$ possible pairing and has positive sign.
\item[3.] There are $2$ tables isomorphic to 
\begin{center}
\begin{tabular}{|c|c|}
\hline
$d$ & $c$ \\ \hline
$d$ & $c$ \\ \hline
$a$ & $d$ \\ \hline
$a$ & $d$ \\ \hline
$b$ & $c$ \\ \hline
$b$ & $c$ \\ \hline
\end{tabular}
\end{center}
as we can swap the two columns. Each of these tables has $3$ possible pairings and has positive sign.
\item[4.] There are $2$ tables isomorphic to 
\begin{center}
\begin{tabular}{|c|c|}
\hline
$c$ & $d$ \\ \hline
$c$ & $d$ \\ \hline
$a$ & $d$ \\ \hline
$a$ & $d$ \\ \hline
$b$ & $c$ \\ \hline
$b$ & $c$ \\ \hline
\end{tabular}
\end{center}
as we can swap the two columns. Each of these tables has $3$ possible pairings and has positive sign.
\end{enumerate}
Thus, for this case, $Weight_{Cores}(S) = 2*9 + 2*1 + 2*3 + 2*3 = 32$.
\item $S$ has an $a_{ij}^{+}$ fragment, a $b_{kl}^{+}$ fragment, a $c_{ik}^{-}$ fragment, and a $d_{jl}^{-}$ for some distinct elements $a,b,c,d \in [n]$ and some rows $i,j,k,l \in [6]$ such that $i,j,k,l$ are distinct and $i < j$. When $i = 3$, $j = 4$, $k = 5$, and $l = 6$, we have the following tables for the minimum compatible cores:
\begin{enumerate}
\item[1.] There are $4$ tables isomorphic to 
\begin{center}
\begin{tabular}{|c|c|}
\hline
$c$ & $d$ \\ \hline
$d$ & $c$ \\ \hline
$a$ & $d$ \\ \hline
$a$ & $c$ \\ \hline
$d$ & $b$ \\ \hline
$c$ & $b$ \\ \hline
\end{tabular}
\end{center}
as we can swap the two columns and/or swap rows $1$ and $2$. Each of these tables has $1$ possible pairings and has negative sign.
\item[2.] There are $2$ tables isomorphic to 
\begin{center}
\begin{tabular}{|c|c|}
\hline
$c$ & $d$ \\ \hline
$c$ & $d$ \\ \hline
$a$ & $d$ \\ \hline
$a$ & $c$ \\ \hline
$b$ & $d$ \\ \hline
$b$ & $c$ \\ \hline
\end{tabular}
\end{center}
as we can swap the two columns. Each of these tables has $3$ possible pairings and has positive sign.
\item[3.] There are $2$ tables isomorphic to 
\begin{center}
\begin{tabular}{|c|c|}
\hline
$d$ & $c$ \\ \hline
$d$ & $c$ \\ \hline
$a$ & $d$ \\ \hline
$a$ & $c$ \\ \hline
$b$ & $d$ \\ \hline
$b$ & $c$ \\ \hline
\end{tabular}
\end{center}
as we can swap the two columns. Each of these tables has $3$ possible pairings and has positive sign.
\end{enumerate}
Thus, for this case, $Weight_{Cores}(S) = 4*(-1) + 2*3 + 2*3 = 8$.
\begin{remark}
To check that row permutations do not affect the sign of $Weight_{Cores}(S)$, observe that regardless of the order of the rows, either $a$ and $b$ are first and last in the ordering of $a,b,c,d$ and $c,d$ are in the middle or $c$ and $d$ are first and last in the ordering of $a,b,c,d$ and $a,b$ are in the middle. In either case, it takes an even number of swaps to put the rows of the table 
\begin{center}
\begin{tabular}{|c|c|}
\hline
$c$ & $d$ \\ \hline
$c$ & $d$ \\ \hline
$a$ & $d$ \\ \hline
$a$ & $c$ \\ \hline
$b$ & $d$ \\ \hline
$b$ & $c$ \\ \hline
\end{tabular}
\end{center}
in order so this table has positive sign regardless of how the rows are permuted.
\end{remark}
\item $S$ has an $a_{ij}^{+}$ fragment, a $b_{kl}^{+}$ fragment, and a $c_{mo}^{+}$ fragment for some distinct elements $a,b,c \in [n]$ and some distinct rows $i,j,k,l,m,o \in [6]$ such that $i < j$, $k < l$, and $m < o$. In this case, there is a single minimum compatible core which has positive sign so $Weight_{Cores}(S) = 1$. When $i = 1$, $j = 2$, $k = 3$, $l = 4$, $m = 5$, and $o = 6$, this core is shown below. 
\begin{center}
\begin{tabular}{|c|}
\hline
$a$ \\ \hline
$a$ \\ \hline
$b$ \\ \hline
$b$ \\ \hline
$c$ \\ \hline
$c$ \\ \hline
\end{tabular}
\end{center}
\item $S$ has an $a_{ij}^{-}$ fragment, a $b_{kl}^{-}$ fragment, and a $c_{mo}^{-}$ fragment for some distinct elements $a,b,c \in [n]$ and some distinct rows $i,j,k,l,m,o \in [6]$ such that $i < j$, $k < l$, and $m < o$. In this case, as shown in Example \ref{ex:mincoresfirstexample}, $Weight_{Cores}(S) = 48$.
\end{enumerate}
We summarize the results we obtained below.
\begin{enumerate}
\item $\emptyset$: $Weight_{Cores}(S) = 1$.
\item $a_{ij}^{+}, b_{ij}^{-}$: $Weight_{Cores}(S) = 3$.
\item $a_{ij}^{+}$, $b_{ij}^{+}$, $c_{ij}^{-}$, $d_{ij}^{-}$: $Weight_{Cores}(S) = 48$.
\item $a_{ij}^{+}$, $b_{ik}^{+}$, $c_{ij}^{-}$, $d_{ik}^{-}$: $Weight_{Cores}(S) = 24$.
\item $a_{ij}^{+}$, $b_{kl}^{+}$, $c_{ij}^{-}$, $d_{kl}^{-}$: $Weight_{Cores}(S) = 32$.
\item $a_{ij}^{+}$, $b_{kl}^{+}$, $c_{ik}^{-}$, $d_{jl}^{-}$: $Weight_{Cores}(S) = 8$.
\item $a_{ij}^{+}$, $b_{kl}^{+}$, $c_{mo}^{+}$: $Weight_{Cores}(S) = 1$.
\item $a_{ij}^{-}$, $b_{kl}^{-}$, $c_{mo}^{-}$: $Weight_{Cores}(S) = 48$.
\end{enumerate}
\begin{remark}
the reason that these are the only cases which appear is that for each shell $S$, for each $i \in [6]$, the sum of the polarities of the fragments involving $i$ is the same. 
\end{remark}
\subsection{Possible Shells with Two Marks}
We are now ready to enumerate the possible shells with two marks. For simplicity, we use the following shorthand for the column factors.
\begin{enumerate}
\item We take $G({\underline{\!\times\!}\,}_3^1) = {\underline{\!\times\!}\,}_3^1$: $G(\mathbf{c}) = \frac{m_1{\mu_3}t}{(1-(\mu_4 - 3)t)(1 + {\mu_3^2}t)}$.
\item We take $G({\underline{\!\times\!}\,}_4^2) = \frac{m_1^2(\mu_4-3)t}{1-(\mu_4 - 3)t}$. 
\item We take $G({\underline{\!\times\!}\,}_2^2) = {m_1^2}t\left(\frac{1 - {\mu_3^2}t + 2{\mu_3^2}(\mu_4-3)t^2}{(1 - (\mu_4-3)t)^{3}(1 + {\mu_3^2}t)}\right)$. To split this into contributions from tables where the two marks are in the same column and contributions from tables where the two marks are in separate columns, we take $G_{together}({\underline{\!\times\!}\,}_2^2) = {m_1^2}t\left(\frac{1}{(1 - (\mu_4-3)t)^{3}}\right)$ and $G_{split}({\underline{\!\times\!}\,}_2^2) = {m_1^2}t\left(\frac{-2{\mu_3^2}t}{(1 - (\mu_4-3)t)^{2}(1 + {\mu_3^2}t)}\right)$.
\end{enumerate}
\subsubsection{Shells Consistent with $P_1$}
The following shells are consistent with $P_1$:
\begin{enumerate}
\item There are $3$ shells isomorphic to 
\begin{center}
\begin{tabular}{|c|}
\hline
$\times a$ \\ \hline
$\times a$ \\ \hline
$\alpha$ \\ \hline
$\alpha$ \\ \hline
$\beta$ \\ \hline
$\beta$ \\ \hline
\end{tabular}
\end{center}
which are compatible with $P_1$. The contribution from these shells is 
\[
15\left(3G({\underline{\!\times\!}\,}_2^2)O(t)\right)\frac{48(t')^2N^{(2)}(t')}{720N(t')} = \frac{3G({\underline{\!\times\!}\,}_2^2)O(t)(t')^2N^{(2)}(t')}{N(t')}.
\]
\item There are $6$ shells isomorphic to 
\begin{center}
\begin{tabular}{|c|}
\hline
$\times a$ \\ \hline
$\times a$ \\ \hline
$a$ \\ \hline
$a$ \\ \hline
$\beta$ \\ \hline
$\beta$ \\ \hline
\end{tabular}
\end{center}
which are compatible with $P_1$. The contribution from these shells is 
\[
15\left(6G({\underline{\!\times\!}\,}_2^2)O(t)\right)\frac{3t'N^{(1)}(t')}{15N(t')} = \frac{18G({\underline{\!\times\!}\,}_2^2)O(t)t'N^{(1)}(t')}{N(t')}.
\]
\item There are $3$ shells isomorphic to 
\begin{center}
\begin{tabular}{|c|}
\hline
$\times a$ \\ \hline
$\times a$ \\ \hline
\cellcolor[HTML]{FD6864} $\alpha$ \\ \hline
\cellcolor[HTML]{FD6864} $\alpha$ \\ \hline
\cellcolor[HTML]{67FD9A} $\alpha$ \\ \hline
\cellcolor[HTML]{67FD9A} $\alpha$ \\ \hline
\end{tabular}
\end{center}
which are compatible with $P_1$. The contribution from these shells is 
\[
15\left(3G({\underline{\!\times\!}\,}_2^2)O(t)\right)\frac{3t'N^{(1)}(t')}{15N(t')} = \frac{9G({\underline{\!\times\!}\,}_2^2)O(t)t'N^{(1)}(t')}{N(t')}.
\]
\item There are $3$ shells isomorphic to 
\begin{center}
\begin{tabular}{|c|}
\hline
$\times a$ \\ \hline
$\times a$ \\ \hline
\cellcolor[HTML]{FD6864}$a$ \\ \hline
\cellcolor[HTML]{FD6864}$a$ \\ \hline
\cellcolor[HTML]{67FD9A}$a$ \\ \hline
\cellcolor[HTML]{67FD9A}$a$ \\ \hline
\end{tabular}
\end{center}
which are compatible with $P_1$. The contribution from these shells is 
\[
15\left(3G({\underline{\!\times\!}\,}_2^2)O(t)\right) = 45G({\underline{\!\times\!}\,}_2^2)O(t).
\]
\item \ 
\begin{center}
\begin{tabular}{|c|}
\hline
$\times a$ \\ \hline
$\times a$ \\ \hline
$a$ \\ \hline
$a$ \\ \hline
$a$ \\ \hline
$a$ \\ \hline
\end{tabular}
\end{center}
The contribution from this shell is $15G({\underline{\!\times\!}\,}_4^2)O(t)$.
\item There are $3$ shells isomorphic to 
\begin{center}
\begin{tabular}{|c|c|}
\hline
$\times a$ & $\gamma$ \\ \hline
$\times a$ & $\gamma$ \\ \hline
$\alpha$ & $a$ \\ \hline
$\alpha$ & $a$ \\ \hline
$\beta$ & $a$ \\ \hline
$\beta$ & $a$ \\ \hline
\end{tabular}
\end{center}
which are compatible with $P_1$. The contribution from these shells is 
\[
15\left(3G({\underline{\!\times\!}\,}_2^2)G(\underline{4})O(t)\right)\frac{48(t')^2N^{(2)}(t')}{720N(t')} = \frac{3G({\underline{\!\times\!}\,}_2^2)G(\underline{4})O(t)(t')^2N^{(2)}(t')}{N(t')}.
\]
\item There are $6$ shells isomorphic to 
\begin{center}
\begin{tabular}{|c|c|}
\hline
$\times a$ & $\alpha$\\ \hline
$\times a$ & $\alpha$ \\ \hline
$\alpha$ & $a$ \\ \hline
$\alpha$ & $a$ \\ \hline
$\beta$ & $a$ \\ \hline
$\beta$ & $a$ \\ \hline
\end{tabular}
\end{center}
which are compatible with $P_1$. The contribution from these shells is 
\[
15\left(6G({\underline{\!\times\!}\,}_2^2)G(\underline{4})O(t)\right)\frac{3t'N^{(1)}(t')}{15N(t')} = \frac{18G({\underline{\!\times\!}\,}_2^2)G(\underline{4})O(t)t'N^{(1)}(t')}{N(t')}.
\]
\item There are $3$ shells isomorphic to 
\begin{center}
\begin{tabular}{|c|c|}
\hline
$\times a$ & $\beta$ \\ \hline
$\times a$ & $\beta$ \\ \hline
\cellcolor[HTML]{FD6864}$\alpha$ & $a$ \\ \hline
\cellcolor[HTML]{FD6864}$\alpha$ & $a$ \\ \hline
\cellcolor[HTML]{67FD9A}$\alpha$ & $a$ \\ \hline
\cellcolor[HTML]{67FD9A}$\alpha$ & $a$ \\ \hline
\end{tabular}
\end{center}
which are compatible with $P_1$. The contribution from these shells is 
\[
15\left(3G({\underline{\!\times\!}\,}_2^2)G(\underline{4})O(t)\right)\frac{3t'N^{(1)}(t')}{15N(t')} = \frac{9G({\underline{\!\times\!}\,}_2^2)O(t)G(\underline{4})t'N^{(1)}(t')}{N(t')}.
\]
\item There are $3$ shells isomorphic to 
\begin{center}
\begin{tabular}{|c|c|}
\hline
$\times a$ & $\alpha$ \\ \hline
$\times a$ & $\alpha$ \\ \hline
\cellcolor[HTML]{FD6864}$\alpha$ & $a$ \\ \hline
\cellcolor[HTML]{FD6864}$\alpha$ & $a$ \\ \hline
\cellcolor[HTML]{67FD9A}$\alpha$ & $a$ \\ \hline
\cellcolor[HTML]{67FD9A}$\alpha$ & $a$ \\ \hline
\end{tabular}
\end{center}
which are compatible with $P_1$. The contribution from these shells is 
\[
15\left(3G({\underline{\!\times\!}\,}_2^2)G(\underline{4})O(t)\right) = 45G({\underline{\!\times\!}\,}_2^2)G(\underline{4})O(t).
\]
\end{enumerate}
Observe that the total contribution from cases 1-4 is 
\[
15\left(3G({\underline{\!\times\!}\,}_2^2)O(t)\right)\left(\frac{48(t')^2N^{(2)}(t')}{720N(t')} + 3 \cdot \frac{3t'N^{(1)}(t')}{15N(t')} + 1\right).
\]
where $\left(\frac{48(t')^2N^{(2)}(t')}{720N(t')} + 3 \cdot \frac{3t'N^{(1)}(t')}{15N(t')} + 1\right)$ is the factor corresponding to the possible identifications for the partial shell 
\begin{center}
\begin{tabular}{|c|}
\hline
$\times a$ \\ \hline
$\times a$ \\ \hline
\cellcolor[HTML]{FD6864} \\ \hline
\cellcolor[HTML]{FD6864} \\ \hline
\cellcolor[HTML]{67FD9A} \\ \hline
\cellcolor[HTML]{67FD9A} \\ \hline
\end{tabular}
\end{center}
which has an $a_{12}^{-}$ fragment, a $u_{34}^{-}$ fragment, and a $u_{56}^{-}$ fragment.

Similarly, the total contribution from cases 6-9 is 
\[
15\left(3G({\underline{\!\times\!}\,}_2^2)G(\underline{4})O(t)\right)\left(\frac{48(t')^2N^{(2)}(t')}{720N(t')} + 3 \cdot \frac{3t'N^{(1)}(t')}{15N(t')} + 1\right).
\]
where $\left(\frac{48(t')^2N^{(2)}(t')}{720N(t')} + 3 \cdot \frac{3t'N^{(1)}(t')}{15N(t')} + 1\right)$ is the factor corresponding to the possible identifications for the partial shell 
\begin{center}
\begin{tabular}{|c|c|}
\hline
$\times a$ & \cellcolor[HTML]{8080FF} \\ \hline
$\times a$ & \cellcolor[HTML]{8080FF} \\ \hline
\cellcolor[HTML]{FD6864} & $a$ \\ \hline
\cellcolor[HTML]{FD6864} & $a$ \\ \hline
\cellcolor[HTML]{67FD9A} & $a$ \\ \hline
\cellcolor[HTML]{67FD9A} & $a$ \\ \hline
\end{tabular}
\end{center}
which has an $u_{12}^{-}$ fragment, a $u_{34}^{-}$ fragment, and a $u_{56}^{-}$ fragment. 
Putting everything together, the total contribution for $P_1$ is 
\[
G({\underline{\!\times\!}\,}_2^2)O(t)(1+G(\underline{4}))\left(\frac{3(t')^2N^{(2)}(t')}{N(t')} + \frac{27t'N^{(1)}(t')}{N(t')} + 45\right) + 15G({\underline{\!\times\!}\,}_4^2)O(t).
\]
\subsubsection{Shells Consistent with $P_2$}
The following shells are consistent with $P_2$:
\begin{enumerate}
\item There are $6$ shells isomorphic to 
\begin{center}
\begin{tabular}{|c|c|}
\hline
$\times a$ & $b$\\ \hline
$\times b$ & $a$ \\ \hline
$\alpha$ & $a$ \\ \hline
$\alpha$ & $a$ \\ \hline
$\beta$ & $b$ \\ \hline
$\beta$ & $b$ \\ \hline
\end{tabular}
\end{center}
which are compatible with $P_2$. Observe that these shells have negative sign so the contribution from these shells is 
\[
-15\left(6G({\underline{\!\times\!}\,}_2^2)G(\underline{3})O(t)\right)\frac{32(t')^2N^{(2)}(t')}{720N(t')} = -\frac{4G({\underline{\!\times\!}\,}_2^2)G(\underline{3})O(t)(t')^2N^{(2)}(t')}{N(t')}.
\]
\item There are $6$ shells isomorphic to 
\begin{center}
\begin{tabular}{|c|c|}
\hline
$\times a$ & $b$\\ \hline
$\times b$ & $a$ \\ \hline
\cellcolor[HTML]{FD6864}$\alpha$ & $a$ \\ \hline
\cellcolor[HTML]{FD6864}$\alpha$ & $a$ \\ \hline
\cellcolor[HTML]{67FD9A}$\alpha$ & $b$ \\ \hline
\cellcolor[HTML]{67FD9A}$\alpha$ & $b$ \\ \hline
\end{tabular}
\end{center}
which are compatible with $P_2$. Observe that these shells have negative sign so the contribution from these shells is 
\[
-15\left(6G({\underline{\!\times\!}\,}_2^2)G(\underline{3})O(t)\right)\frac{t'N^{(1)}(t')}{15N(t')} = -\frac{6G({\underline{\!\times\!}\,}_2^2)G(\underline{3})O(t)t'N^{(1)}(t')}{N(t')}.
\]
\item There are $12$ shells isomorphic to 
\begin{center}
\begin{tabular}{|c|c|}
\hline
$\times a$ & $b$\\ \hline
$\times b$ & $a$ \\ \hline
$\alpha$ & $a$ \\ \hline
$\alpha$ & $a$ \\ \hline
$a$ & $b$ \\ \hline
$a$ & $b$ \\ \hline
\end{tabular}
\end{center}
which are compatible with $P_2$. Observe that these shells have negative sign so the contribution from these shells is 
\[
-15\left(12G({\underline{\!\times\!}\,}_2^2)G(\underline{3})O(t)\right)\frac{3t'N^{(1)}(t')}{15N(t')} = -\frac{36G({\underline{\!\times\!}\,}_2^2)G(\underline{3})O(t)t'N^{(1)}(t')}{N(t')}.
\]
\item There are $6$ shells isomorphic to 
\begin{center}
\begin{tabular}{|c|c|}
\hline
$\times a$ & $b$\\ \hline
$\times b$ & $a$ \\ \hline
$b$ & $a$ \\ \hline
$b$ & $a$ \\ \hline
$a$ & $b$ \\ \hline
$a$ & $b$ \\ \hline
\end{tabular}
\end{center}
which are compatible with $P_2$. Observe that these shells have negative sign so the contribution from these shells is 
$-15\left(6G({\underline{\!\times\!}\,}_2^2)G(\underline{3})O(t)\right) = -90G({\underline{\!\times\!}\,}_2^2)G(\underline{3})O(t)$.
\item There are $12$ shells isomorphic to 
\begin{center}
\begin{tabular}{|c|c|}
\hline
$\times a$ & $b$\\ \hline
$\times b$ & $a$ \\ \hline
$\alpha$ & $a$ \\ \hline
$\beta$ & $a$ \\ \hline
$\alpha$ & $b$ \\ \hline
$\beta$ & $b$ \\ \hline
\end{tabular}
\end{center}
which are compatible with $P_2$. Observe that these shells have negative sign so the contribution from these shells is 
\[
-15\left(12G({\underline{\!\times\!}\,}_2^2)G(\underline{3})O(t)\right)\frac{8(t')^2N^{(2)}(t')}{720N(t')} = -\frac{2G({\underline{\!\times\!}\,}_2^2)G(\underline{3})O(t)(t')^2N^{(2)}(t')}{N(t')}.
\]
\item There are $12$ shells isomorphic to 
\begin{center}
\begin{tabular}{|c|c|}
\hline
$\times a$ & $b$\\ \hline
$\times b$ & $a$ \\ \hline
\cellcolor[HTML]{FD6864}$\alpha$ & $a$ \\ \hline
\cellcolor[HTML]{67FD9A}$\alpha$ & $a$ \\ \hline
\cellcolor[HTML]{FD6864}$\alpha$ & $b$ \\ \hline
\cellcolor[HTML]{67FD9A}$\alpha$ & $b$ \\ \hline
\end{tabular}
\end{center}
which are compatible with $P_2$. Observe that these shells have negative sign so the contribution from these shells is 
\[
-15\left(12G({\underline{\!\times\!}\,}_2^2)G(\underline{3})O(t)\right)\frac{t'N^{(1)}(t')}{15N(t')} = -\frac{12G({\underline{\!\times\!}\,}_2^2)G(\underline{3})O(t)t'N^{(1)}(t')}{N(t')}.
\]
\end{enumerate}
Observe that the total contribution from cases 1-4 is 
\[
-15\left(6G({\underline{\!\times\!}\,}_2^2)O(t)G(\underline{3})\right)\left(\frac{32(t')^2N^{(2)}(t')}{720N(t')} + \frac{t'N^{(1)}(t')}{15N(t')} + 2 \cdot \frac{3t'N^{(1)}(t')}{15N(t')} + 1\right).
\]
where $\left(\frac{32(t')^2N^{(2)}(t')}{720N(t')} + \frac{t'N^{(1)}(t')}{15N(t')} + 2 \cdot \frac{3t'N^{(1)}(t')}{15N(t')} + 1\right)$ is the factor corresponding to the possible identifications for the partial shell 
\begin{center}
\begin{tabular}{|c|c|}
\hline
$\times a$ & $b$\\ \hline
$\times b$ & $a$ \\ \hline
\cellcolor[HTML]{FD6864} & $a$ \\ \hline
\cellcolor[HTML]{FD6864} & $a$ \\ \hline
\cellcolor[HTML]{67FD9A} & $b$ \\ \hline
\cellcolor[HTML]{67FD9A} & $b$ \\ \hline
\end{tabular}
\end{center}
which has an $a_{56}^{+}$ fragment, a $b_{34}^{+}$ fragment, a $u_{34}^{-}$ fragment, and a a $u_{56}^{-}$ fragment.

the total contribution from cases 5-6 is 
\[
-15\left(12G({\underline{\!\times\!}\,}_2^2)O(t)G(\underline{3})\right)\left(\frac{8(t')^2N^{(2)}(t')}{720N(t')} + \frac{t'N^{(1)}(t')}{15N(t')}\right).
\]
where $\left(\frac{8(t')^2N^{(2)}(t')}{720N(t')} + \frac{t'N^{(1)}(t')}{15N(t')}\right)$ is the factor corresponding to the possible identifications for the partial shell 
\begin{center}
\begin{tabular}{|c|c|}
\hline
$\times a$ & $b$\\ \hline
$\times b$ & $a$ \\ \hline
\cellcolor[HTML]{FD6864} & $a$ \\ \hline
\cellcolor[HTML]{67FD9A} & $a$ \\ \hline
\cellcolor[HTML]{FD6864} & $b$ \\ \hline
\cellcolor[HTML]{67FD9A} & $b$ \\ \hline
\end{tabular}
\end{center}
which has an $a_{56}^{+}$ fragment, a $b_{34}^{+}$ fragment, a $u_{35}^{-}$ fragment, and a $u_{46}^{-}$ fragment.

Putting everything together, the total contribution for $P_2$ is 
\[
-G({\underline{\!\times\!}\,}_2^2)O(t)G(\underline{3})\left(\frac{6(t')^2N^{(2)}(t')}{N(t')} + \frac{54t'N^{(1)}(t')}{N(t')} + 90\right).
\]
\subsubsection{Shells Consistent with $P_3$}
The following shells are consistent with $P_3$:
\begin{enumerate}
\item There are $6$ shells isomorphic to
\begin{center}
\begin{tabular}{|c|c|}
\hline
$\times a$ & $b$\\ \hline
$a$ & $\times b$ \\ \hline
$a$ & $b$ \\ \hline
$a$ & $b$ \\ \hline
$\alpha$ & $\beta$ \\ \hline
$\alpha$ & $\beta$ \\ \hline
\end{tabular}
\end{center}
which are compatible with $P_3$. The contribution from these shells is 
\[
15\left(6G({\underline{\!\times\!}\,}_3^1)^{2}O(t)\right)\frac{48(t')^2N^{(2)}(t')}{720N(t')} = \frac{6G({\underline{\!\times\!}\,}_3^1)^{2}O(t)(t')^2N^{(2)}(t')}{N(t')}
\]
\item There are $12$ shells isomorphic to
\begin{center}
\begin{tabular}{|c|c|}
\hline
$\times a$ & $b$\\ \hline
$a$ & $\times b$ \\ \hline
$a$ & $b$ \\ \hline
$a$ & $b$ \\ \hline
$\alpha$ & $a$ \\ \hline
$\alpha$ & $a$ \\ \hline
\end{tabular}
\end{center}
which are compatible with $P_3$. The contribution from these shells is 
\[
15\left(12G({\underline{\!\times\!}\,}_3^1)^{2}O(t)\right)\frac{3t'N^{(1)}(t')}{15N(t')} = \frac{36G({\underline{\!\times\!}\,}_3^1)^{2}O(t)t'N^{(1)}(t')}{N(t')}
\]
\item There are $12$ shells isomorphic to
\begin{center}
\begin{tabular}{|c|c|}
\hline
$\times a$ & $b$\\ \hline
$a$ & $\times b$ \\ \hline
$a$ & $b$ \\ \hline
$a$ & $b$ \\ \hline
\cellcolor[HTML]{FD6864}$a$ & $\alpha$ \\ \hline
\cellcolor[HTML]{FD6864}$a$ & $\alpha$ \\ \hline
\end{tabular}
\end{center}
which are compatible with $P_3$. The contribution from these shells is 
\[
15\left(12G({\underline{\!\times\!}\,}_3^1)^{2}O(t)\right)\frac{3t'N^{(1)}(t')}{15N(t')} = \frac{36G({\underline{\!\times\!}\,}_3^1)^{2}O(t)t'N^{(1)}(t')}{N(t')}
\]
\item There are $6$ shells isomorphic to
\begin{center}
\begin{tabular}{|c|c|}
\hline
$\times a$ & $b$\\ \hline
$a$ & $\times b$ \\ \hline
$a$ & $b$ \\ \hline
$a$ & $b$ \\ \hline
$b$ & $a$ \\ \hline
$b$ & $a$ \\ \hline
\end{tabular}
\end{center}
which are compatible with $P_3$. The contribution from these shells is 
\[
15\left(6G({\underline{\!\times\!}\,}_3^1)^{2}O(t)\right) = 90G({\underline{\!\times\!}\,}_3^1)^{2}O(t).
\]
\item There are $6$ shells isomorphic to
\begin{center}
\begin{tabular}{|c|c|}
\hline
$\times a$ & $b$\\ \hline
$a$ & $\times b$ \\ \hline
$a$ & $b$ \\ \hline
$a$ & $b$ \\ \hline
\cellcolor[HTML]{FD6864}$a$ & \cellcolor[HTML]{FD6864}$b$ \\ \hline
\cellcolor[HTML]{FD6864}$a$ & \cellcolor[HTML]{FD6864}$b$ \\ \hline
\end{tabular}
\end{center}
which are compatible with $P_3$. The contribution from these shells is 
\[
15\left(6G({\underline{\!\times\!}\,}_3^1)^{2}O(t)\right) = 90G({\underline{\!\times\!}\,}_3^1)^{2}O(t).
\]
\item There are $24$ shells isomorphic to
\begin{center}
\begin{tabular}{|c|c|}
\hline
$\times a$ & $b$\\ \hline
$a$ & $\times b$ \\ \hline
$a$ & $b$ \\ \hline
$a$ & $\beta$ \\ \hline
$\alpha$ & $b$ \\ \hline
$\alpha$ & $\beta$ \\ \hline
\end{tabular}
\end{center}
which are compatible with $P_3$. The contribution from these shells is 
\[
15\left(24G({\underline{\!\times\!}\,}_3^1)^{2}O(t)\right)\frac{24(t')^2N^{(2)}(t')}{720N(t')} = \frac{12G({\underline{\!\times\!}\,}_3^1)^{2}O(t)(t')^2N^{(2)}(t')}{N(t')}
\]
\item There are $48$ shells isomorphic to
\begin{center}
\begin{tabular}{|c|c|}
\hline
$\times a$ & $b$\\ \hline
$a$ & $\times b$ \\ \hline
$a$ & $b$ \\ \hline
$a$ & $\alpha$ \\ \hline
\cellcolor[HTML]{FD6864}$a$ & $b$ \\ \hline
\cellcolor[HTML]{FD6864}$a$ & $\alpha$ \\ \hline
\end{tabular}
\end{center}
which are compatible with $P_3$. The contribution from these shells is 
\[
15\left(48G({\underline{\!\times\!}\,}_3^1)^{2}O(t)\right)\frac{3t'N^{(1)}(t')}{15N(t')} = \frac{144G({\underline{\!\times\!}\,}_3^1)^{2}O(t)t'N^{(1)}(t')}{N(t')}
\]
\item There are $24$ shells isomorphic to
\begin{center}
\begin{tabular}{|c|c|}
\hline
$\times a$ & $b$\\ \hline
$a$ & $\times b$ \\ \hline
$a$ & $b$ \\ \hline
$a$ & \cellcolor[HTML]{FD6864}$b$ \\ \hline
\cellcolor[HTML]{FD6864}$a$ & $b$ \\ \hline
\cellcolor[HTML]{FD6864}$a$ & \cellcolor[HTML]{FD6864}$b$ \\ \hline
\end{tabular}
\end{center}
which are compatible with $P_3$. The contribution from these shells is 
\[
15\left(24G({\underline{\!\times\!}\,}_3^1)^{2}O(t)\right) = 360G({\underline{\!\times\!}\,}_3^1)^{2}O(t).
\]
\item There are $6$ shells isomorphic to
\begin{center}
\begin{tabular}{|c|c|}
\hline
$\times a$ & $b$\\ \hline
$a$ & $\times b$ \\ \hline
$a$ & $\beta$ \\ \hline
$a$ & $\beta$ \\ \hline
$\alpha$ & $b$ \\ \hline
$\alpha$ & $b$ \\ \hline
\end{tabular}
\end{center}
which are compatible with $P_3$. The contribution from these shells is 
\[
15\left(6G({\underline{\!\times\!}\,}_3^1)^{2}O(t)\right)\frac{32(t')^2N^{(2)}(t')}{720N(t')} = \frac{4G({\underline{\!\times\!}\,}_3^1)^{2}O(t)(t')^2N^{(2)}(t')}{N(t')}
\]
\item There are $6$ shells isomorphic to
\begin{center}
\begin{tabular}{|c|c|}
\hline
$\times a$ & $b$\\ \hline
$a$ & $\times b$ \\ \hline
$a$ & $\alpha$ \\ \hline
$a$ & $\alpha$ \\ \hline
$\alpha$ & $b$ \\ \hline
$\alpha$ & $b$ \\ \hline
\end{tabular}
\end{center}
which are compatible with $P_3$. The contribution from these shells is 
\[
15\left(6G({\underline{\!\times\!}\,}_3^1)^{2}O(t)\right)\frac{t'N^{(1)}(t')}{15N(t')} = \frac{6G({\underline{\!\times\!}\,}_3^1)^{2}O(t)t'N^{(1)}(t')}{N(t')}
\]
\item There are $12$ shells isomorphic to
\begin{center}
\begin{tabular}{|c|c|}
\hline
$\times a$ & $b$\\ \hline
$a$ & $\times b$ \\ \hline
$a$ & $\alpha$ \\ \hline
$a$ & $\alpha$ \\ \hline
\cellcolor[HTML]{FD6864}$a$ & $b$ \\ \hline
\cellcolor[HTML]{FD6864}$a$ & $b$ \\ \hline
\end{tabular}
\end{center}
which are compatible with $P_3$. The contribution from these shells is 
\[
15\left(12G({\underline{\!\times\!}\,}_3^1)^{2}O(t)\right)\frac{3t'N^{(1)}(t')}{15N(t')} = \frac{36G({\underline{\!\times\!}\,}_3^1)^{2}O(t)t'N^{(1)}(t')}{N(t')}
\]
\item There are $6$ shells isomorphic to
\begin{center}
\begin{tabular}{|c|c|}
\hline
$\times a$ & $b$\\ \hline
$a$ & $\times b$ \\ \hline
$a$ & \cellcolor[HTML]{FD6864}$b$ \\ \hline
$a$ & \cellcolor[HTML]{FD6864}$b$ \\ \hline
\cellcolor[HTML]{FD6864}$a$ & $b$ \\ \hline
\cellcolor[HTML]{FD6864}$a$ & $b$ \\ \hline
\end{tabular}
\end{center}
which are compatible with $P_3$. The contribution from these shells is 
\[
15\left(6G({\underline{\!\times\!}\,}_3^1)^{2}O(t)\right) = 90G({\underline{\!\times\!}\,}_3^1)^{2}O(t).
\]
\item There are $24$ shells isomorphic to
\begin{center}
\begin{tabular}{|c|c|}
\hline
$\times a$ & $\beta$\\ \hline
$a$ & $\times b$ \\ \hline
$a$ & $b$ \\ \hline
$a$ & $b$ \\ \hline
$\alpha$ & $b$ \\ \hline
$\alpha$ & $\beta$ \\ \hline
\end{tabular}
\end{center}
which are compatible with $P_3$. The contribution from these shells is 
\[
15\left(24G({\underline{\!\times\!}\,}_3^1)^{2}O(t)\right)\frac{24(t')^2N^{(2)}(t')}{720N(t')} = \frac{12G({\underline{\!\times\!}\,}_3^1)^{2}O(t)(t')^2N^{(2)}(t')}{N(t')}
\]
\item There are $24$ shells isomorphic to
\begin{center}
\begin{tabular}{|c|c|}
\hline
$\times a$ & $\alpha$\\ \hline
$a$ & $\times b$ \\ \hline
$a$ & $b$ \\ \hline
$a$ & $b$ \\ \hline
\cellcolor[HTML]{FD6864}$a$ & $b$ \\ \hline
\cellcolor[HTML]{FD6864}$a$ & $\alpha$ \\ \hline
\end{tabular}
\end{center}
which are compatible with $P_3$. The contribution from these shells is 
\[
15\left(24G({\underline{\!\times\!}\,}_3^1)^{2}O(t)\right)\frac{3t'N^{(1)}(t')}{15N(t')} = \frac{72G({\underline{\!\times\!}\,}_3^1)^{2}O(t)t'N^{(1)}(t')}{N(t')}
\]
\item There are $24$ shells isomorphic to
\begin{center}
\begin{tabular}{|c|c|}
\hline
$\times a$ & \cellcolor[HTML]{FD6864}$b$\\ \hline
$a$ & $\times b$ \\ \hline
$a$ & $b$ \\ \hline
$a$ & $b$ \\ \hline
$\alpha$ & $b$ \\ \hline
$\alpha$ & \cellcolor[HTML]{FD6864}$b$ \\ \hline
\end{tabular}
\end{center}
which are compatible with $P_3$. The contribution from these shells is 
\[
15\left(24G({\underline{\!\times\!}\,}_3^1)^{2}O(t)\right)\frac{3t'N^{(1)}(t')}{15N(t')} = \frac{72G({\underline{\!\times\!}\,}_3^1)^{2}O(t)t'N^{(1)}(t')}{N(t')}
\]
\item There are $24$ shells isomorphic to
\begin{center}
\begin{tabular}{|c|c|}
\hline
$\times a$ & \cellcolor[HTML]{FD6864}$b$\\ \hline
$a$ & $\times b$ \\ \hline
$a$ & $b$ \\ \hline
$a$ & $b$ \\ \hline
\cellcolor[HTML]{FD6864}$a$ & $b$ \\ \hline
\cellcolor[HTML]{FD6864}$a$ & \cellcolor[HTML]{FD6864}$b$ \\ \hline
\end{tabular}
\end{center}
which are compatible with $P_3$. The contribution from these shells is 
\[
15\left(24G({\underline{\!\times\!}\,}_3^1)^{2}O(t)\right) = 360G({\underline{\!\times\!}\,}_3^1)^{2}O(t).
\]
\item There are $24$ shells isomorphic to
\begin{center}
\begin{tabular}{|c|c|}
\hline
$\times a$ & $\beta$\\ \hline
$a$ & $\times b$ \\ \hline
$a$ & $b$ \\ \hline
$a$ & $\beta$ \\ \hline
$\alpha$ & $b$ \\ \hline
$\alpha$ & $b$ \\ \hline
\end{tabular}
\end{center}
which are compatible with $P_3$. The contribution from these shells is 
\[
15\left(24G({\underline{\!\times\!}\,}_3^1)^{2}O(t)\right)\frac{32(t')^2N^{(2)}(t')}{720N(t')} = \frac{16G({\underline{\!\times\!}\,}_3^1)^{2}O(t)(t')^2N^{(2)}(t')}{N(t')}
\]
\item There are $24$ shells isomorphic to
\begin{center}
\begin{tabular}{|c|c|}
\hline
$\times a$ & $\alpha$\\ \hline
$a$ & $\times b$ \\ \hline
$a$ & $b$ \\ \hline
$a$ & $\alpha$ \\ \hline
$\alpha$ & $b$ \\ \hline
$\alpha$ & $b$ \\ \hline
\end{tabular}
\end{center}
which are compatible with $P_3$. The contribution from these shells is 
\[
15\left(24G({\underline{\!\times\!}\,}_3^1)^{2}O(t)\right)\frac{t'N^{(1)}(t')}{15N(t')} = \frac{24G({\underline{\!\times\!}\,}_3^1)^{2}O(t)t'N^{(1)}(t')}{N(t')}
\]
\item There are $24$ shells isomorphic to
\begin{center}
\begin{tabular}{|c|c|}
\hline
$\times a$ & $\alpha$\\ \hline
$a$ & $\times b$ \\ \hline
$a$ & $b$ \\ \hline
$a$ & $\alpha$ \\ \hline
\cellcolor[HTML]{FD6864}$a$ & $b$ \\ \hline
\cellcolor[HTML]{FD6864}$a$ & $b$ \\ \hline
\end{tabular}
\end{center}
which are compatible with $P_3$. The contribution from these shells is 
\[
15\left(24G({\underline{\!\times\!}\,}_3^1)^{2}O(t)\right)\frac{3t'N^{(1)}(t')}{15N(t')} = \frac{72G({\underline{\!\times\!}\,}_3^1)^{2}O(t)t'N^{(1)}(t')}{N(t')}
\]
\item There are $24$ shells isomorphic to
\begin{center}
\begin{tabular}{|c|c|}
\hline
$\times a$ & \cellcolor[HTML]{FD6864}$b$\\ \hline
$a$ & $\times b$ \\ \hline
$a$ & $b$ \\ \hline
$a$ & \cellcolor[HTML]{FD6864}$b$ \\ \hline
$\alpha$ & $b$ \\ \hline
$\alpha$ & $b$ \\ \hline
\end{tabular}
\end{center}
which are compatible with $P_3$. The contribution from these shells is 
\[
15\left(24G({\underline{\!\times\!}\,}_3^1)^{2}O(t)\right)\frac{3t'N^{(1)}(t')}{15N(t')} = \frac{72G({\underline{\!\times\!}\,}_3^1)^{2}O(t)t'N^{(1)}(t')}{N(t')}
\]
\item There are $24$ shells isomorphic to
\begin{center}
\begin{tabular}{|c|c|}
\hline
$\times a$ & \cellcolor[HTML]{FD6864}$b$\\ \hline
$a$ & $\times b$ \\ \hline
$a$ & $b$ \\ \hline
$a$ & \cellcolor[HTML]{FD6864}$b$ \\ \hline
\cellcolor[HTML]{FD6864}$a$ & $b$ \\ \hline
\cellcolor[HTML]{FD6864}$a$ & $b$ \\ \hline
\end{tabular}
\end{center}
which are compatible with $P_3$. The contribution from these shells is 
\[
15\left(24G({\underline{\!\times\!}\,}_3^1)^{2}O(t)\right) = 360G({\underline{\!\times\!}\,}_3^1)^{2}O(t).
\]
\item There are $4$ shells isomorphic to
\begin{center}
\begin{tabular}{|c|c|}
\hline
$\times a$ & $\beta$\\ \hline
$\alpha$ & $\times b$ \\ \hline
$a$ & $b$ \\ \hline
$a$ & $b$ \\ \hline
$a$ & $b$ \\ \hline
$\alpha$ & $\beta$ \\ \hline
\end{tabular}
\end{center}
which are compatible with $P_3$. The contribution from these shells is 
\[
15\left(4G({\underline{\!\times\!}\,}_3^1)^{2}O(t)\right)\frac{24(t')^2N^{(2)}(t')}{720N(t')} = \frac{2G({\underline{\!\times\!}\,}_3^1)^{2}O(t)(t')^2N^{(2)}(t')}{N(t')}.
\]
\item There are $8$ shells isomorphic to
\begin{center}
\begin{tabular}{|c|c|}
\hline
$\times a$ & $\alpha$\\ \hline
\cellcolor[HTML]{FD6864}$a$ & $\times b$ \\ \hline
$a$ & $b$ \\ \hline
$a$ & $b$ \\ \hline
$a$ & $b$ \\ \hline
\cellcolor[HTML]{FD6864}$a$ & $\alpha$ \\ \hline
\end{tabular}
\end{center}
which are compatible with $P_3$. The contribution from these shells is 
\[
15\left(8G({\underline{\!\times\!}\,}_3^1)^{2}O(t)\right)\frac{3t'N^{(1)}(t')}{15N(t')} = \frac{24G({\underline{\!\times\!}\,}_3^1)^{2}O(t)t'N^{(1)}(t')}{N(t')}.
\]
\item There are $4$ shells isomorphic to
\begin{center}
\begin{tabular}{|c|c|}
\hline
$\times a$ & \cellcolor[HTML]{FD6864}$b$\\ \hline
\cellcolor[HTML]{FD6864}$a$ & $\times b$ \\ \hline
$a$ & $b$ \\ \hline
$a$ & $b$ \\ \hline
$a$ & $b$ \\ \hline
\cellcolor[HTML]{FD6864}$a$ & \cellcolor[HTML]{FD6864}$b$ \\ \hline
\end{tabular}
\end{center}
which are compatible with $P_3$. The contribution from these shells is 
\[
15\left(4G({\underline{\!\times\!}\,}_3^1)^{2}O(t)\right) = 60G({\underline{\!\times\!}\,}_3^1)^{2}O(t).
\]
\item There are $12$ shells isomorphic to
\begin{center}
\begin{tabular}{|c|c|}
\hline
$\times a$ & $\beta$\\ \hline
$\alpha$ & $\times b$ \\ \hline
$a$ & $b$ \\ \hline
$a$ & $b$ \\ \hline
$a$ & $\beta$ \\ \hline
$\alpha$ & $b$ \\ \hline
\end{tabular}
\end{center}
which are compatible with $P_3$. The contribution from these shells is 
\[
15\left(12G({\underline{\!\times\!}\,}_3^1)^{2}O(t)\right)\frac{32(t')^2N^{(2)}(t')}{720N(t')} = \frac{8G({\underline{\!\times\!}\,}_3^1)^{2}O(t)(t')^2N^{(2)}(t')}{N(t')}
\]
\item There are $12$ shells isomorphic to
\begin{center}
\begin{tabular}{|c|c|}
\hline
$\times a$ & $\alpha$\\ \hline
$\alpha$ & $\times b$ \\ \hline
$a$ & $b$ \\ \hline
$a$ & $b$ \\ \hline
$a$ & $\alpha$ \\ \hline
$\alpha$ & $b$ \\ \hline
\end{tabular}
\end{center}
which are compatible with $P_3$. The contribution from these shells is 
\[
15\left(12G({\underline{\!\times\!}\,}_3^1)^{2}O(t)\right)\frac{t'N^{(1)}(t')}{15N(t')} = \frac{12G({\underline{\!\times\!}\,}_3^1)^{2}O(t)t'N^{(1)}(t')}{N(t')}.
\]
\item There are $24$ shells isomorphic to
\begin{center}
\begin{tabular}{|c|c|}
\hline
$\times a$ & $\alpha$\\ \hline
\cellcolor[HTML]{FD6864}$a$ & $\times b$ \\ \hline
$a$ & $b$ \\ \hline
$a$ & $b$ \\ \hline
$a$ & $\alpha$ \\ \hline
\cellcolor[HTML]{FD6864}$a$ & $b$ \\ \hline
\end{tabular}
\end{center}
which are compatible with $P_3$. The contribution from these shells is 
\[
15\left(24G({\underline{\!\times\!}\,}_3^1)^{2}O(t)\right)\frac{3t'N^{(1)}(t')}{15N(t')} = \frac{72G({\underline{\!\times\!}\,}_3^1)^{2}O(t)t'N^{(1)}(t')}{N(t')}.
\]
\item There are $12$ shells isomorphic to
\begin{center}
\begin{tabular}{|c|c|}
\hline
$\times a$ & \cellcolor[HTML]{FD6864}$b$\\ \hline
\cellcolor[HTML]{FD6864}$a$ & $\times b$ \\ \hline
$a$ & $b$ \\ \hline
$a$ & $b$ \\ \hline
$a$ & \cellcolor[HTML]{FD6864}$b$ \\ \hline
\cellcolor[HTML]{FD6864}$a$ & $b$ \\ \hline
\end{tabular}
\end{center}
which are compatible with $P_3$. The contribution from these shells is 
\[
15\left(12G({\underline{\!\times\!}\,}_3^1)^{2}O(t)\right) = 180G({\underline{\!\times\!}\,}_3^1)^{2}O(t).
\]
\item There are $4$ shells isomorphic to
\begin{center}
\begin{tabular}{|c|c|}
\hline
$\times a$ & $\beta$\\ \hline
$\alpha$ & $\times b$ \\ \hline
$b$ & $a$ \\ \hline
$b$ & $a$ \\ \hline
$b$ & $a$ \\ \hline
$\alpha$ & $\beta$ \\ \hline
\end{tabular}
\end{center}
which are compatible with $P_3$. These shells have negative sign so the contribution from these shells is 
\[
-15\left(4G({\underline{\!\times\!}\,}_3^1)^{2}O(t)\right)\frac{24(t')^2N^{(2)}(t')}{720N(t')} = -\frac{2G({\underline{\!\times\!}\,}_3^1)^{2}O(t)(t')^2N^{(2)}(t')}{N(t')}.
\]
\item There are $8$ shells isomorphic to
\begin{center}
\begin{tabular}{|c|c|}
\hline
$\times a$ & $\alpha$\\ \hline
$a$ & $\times b$ \\ \hline
$b$ & $a$ \\ \hline
$b$ & $a$ \\ \hline
$b$ & $a$ \\ \hline
$a$ & $\alpha$ \\ \hline
\end{tabular}
\end{center}
which are compatible with $P_3$. These shells have negative sign so the contribution from these shells is
\[
-15\left(8G({\underline{\!\times\!}\,}_3^1)^{2}O(t)\right)\frac{3t'N^{(1)}(t')}{15N(t')} = -\frac{24G({\underline{\!\times\!}\,}_3^1)^{2}O(t)t'N^{(1)}(t')}{N(t')}.
\]
\item There are $4$ shells isomorphic to
\begin{center}
\begin{tabular}{|c|c|}
\hline
$\times a$ & $b$\\ \hline
$a$ & $\times b$ \\ \hline
$b$ & $a$ \\ \hline
$b$ & $a$ \\ \hline
$b$ & $a$ \\ \hline
$a$ & $b$ \\ \hline
\end{tabular}
\end{center}
which are compatible with $P_3$. These shells have negative sign so the contribution from these shells is
\[
-15\left(4G({\underline{\!\times\!}\,}_3^1)^{2}O(t)\right) = -60G({\underline{\!\times\!}\,}_3^1)^{2}O(t).
\]
\item There are $12$ shells isomorphic to
\begin{center}
\begin{tabular}{|c|c|}
\hline
$\times a$ & $\beta$\\ \hline
$\alpha$ & $\times b$ \\ \hline
$b$ & $a$ \\ \hline
$b$ & $a$ \\ \hline
$b$ & $\beta$ \\ \hline
$\alpha$ & $a$ \\ \hline
\end{tabular}
\end{center}
which are compatible with $P_3$. These shells have positive sign so the contribution from these shells is 
\[
15\left(12G({\underline{\!\times\!}\,}_3^1)^{2}O(t)\right)\frac{8(t')^2N^{(2)}(t')}{720N(t')} = \frac{2G({\underline{\!\times\!}\,}_3^1)^{2}O(t)(t')^2N^{(2)}(t')}{N(t')}
\]
\item There are $12$ shells isomorphic to
\begin{center}
\begin{tabular}{|c|c|}
\hline
$\times a$ & $\alpha$\\ \hline
$\alpha$ & $\times b$ \\ \hline
$b$ & $a$ \\ \hline
$b$ & $a$ \\ \hline
$b$ & $\alpha$ \\ \hline
$\alpha$ & $a$ \\ \hline
\end{tabular}
\end{center}
which are compatible with $P_3$. The contribution from these shells is 
\[
15\left(12G({\underline{\!\times\!}\,}_3^1)^{2}O(t)\right)\frac{t'N^{(1)}(t')}{15N(t')} = \frac{12G({\underline{\!\times\!}\,}_3^1)^{2}O(t)t'N^{(1)}(t')}{N(t')}.
\]
\item There are $8$ shells isomorphic to
\begin{center}
\begin{tabular}{|c|c|}
\hline
$\times a$ & $\alpha$\\ \hline
$a$ & $\times b$ \\ \hline
$a$ & $b$ \\ \hline
$a$ & $b$ \\ \hline
$a$ & $b$ \\ \hline
$a$ & $\alpha$ \\ \hline
\end{tabular}
\end{center}
which are compatible with $P_3$. The contribution from these shells is 
\[
15\left(8G({\underline{\!\times\!}\,}_3^1)G({\underline{\!\times\!}\,}_5^1)O(t)\right)\frac{3t'N^{(1)}(t')}{15N(t')} = \frac{24G({\underline{\!\times\!}\,}_3^1)G({\underline{\!\times\!}\,}_5^1)O(t)t'N^{(1)}(t')}{N(t')}.
\]
\item There are $8$ shells isomorphic to
\begin{center}
\begin{tabular}{|c|c|}
\hline
$\times a$ & \cellcolor[HTML]{FD6864}$b$\\ \hline
$a$ & $\times b$ \\ \hline
$a$ & $b$ \\ \hline
$a$ & $b$ \\ \hline
$a$ & $b$ \\ \hline
$a$ & \cellcolor[HTML]{FD6864}$b$ \\ \hline
\end{tabular}
\end{center}
which are compatible with $P_3$. The contribution from these shells is 
\[
15\left(8G({\underline{\!\times\!}\,}_3^1)G({\underline{\!\times\!}\,}_5^1)O(t)\right) = 120G({\underline{\!\times\!}\,}_3^1)G({\underline{\!\times\!}\,}_5^1)O(t).
\]
\item There are $12$ shells isomorphic to
\begin{center}
\begin{tabular}{|c|c|}
\hline
$\times a$ & $b$\\ \hline
$a$ & $\times b$ \\ \hline
$a$ & $b$ \\ \hline
$a$ & $b$ \\ \hline
$a$ & $\alpha$ \\ \hline
$a$ & $\alpha$ \\ \hline
\end{tabular}
\end{center}
which are compatible with $P_3$. The contribution from these shells is 
\[
15\left(12G({\underline{\!\times\!}\,}_3^1)G({\underline{\!\times\!}\,}_5^1)O(t)\right)\frac{3t'N^{(1)}(t')}{15N(t')} = \frac{36G({\underline{\!\times\!}\,}_3^1)G({\underline{\!\times\!}\,}_5^1)O(t)t'N^{(1)}(t')}{N(t')}.
\]
\item There are $12$ shells isomorphic to
\begin{center}
\begin{tabular}{|c|c|}
\hline
$\times a$ & $b$\\ \hline
$a$ & $\times b$ \\ \hline
$a$ & $b$ \\ \hline
$a$ & $b$ \\ \hline
$a$ & \cellcolor[HTML]{FD6864}$b$ \\ \hline
$a$ & \cellcolor[HTML]{FD6864}$b$ \\ \hline
\end{tabular}
\end{center}
which are compatible with $P_3$. The contribution from these shells is 
\[
15\left(12G({\underline{\!\times\!}\,}_3^1)G({\underline{\!\times\!}\,}_5^1)O(t)\right) = 180G({\underline{\!\times\!}\,}_3^1)G({\underline{\!\times\!}\,}_5^1)O(t).
\]
\item \ 
\begin{center}
\begin{tabular}{|c|c|}
\hline
$\times a$ & $b$\\ \hline
$a$ & $\times b$ \\ \hline
$a$ & $b$ \\ \hline
$a$ & $b$ \\ \hline
$a$ & $b$ \\ \hline
$a$ & $b$ \\ \hline
\end{tabular}
\end{center}
The contribution from this shell is $15\left(G({\underline{\!\times\!}\,}_5^1)^{2}O(t)\right) = 15G({\underline{\!\times\!}\,}_5^1)^{2}O(t)$.
\end{enumerate}
To obtain the total contribution from $P_3$, we make the following observations:
\begin{enumerate}
\item The total contribution from cases 1-5 is 
\[
15\left(6G({\underline{\!\times\!}\,}_3^1)^{2}\right)O(t)\left(\frac{48(t')^2N^{(2)}(t')}{720N(t')} + 4\frac{3t'N^{(1)}(t')}{15N(t')} + 2\right)
\]
where the factor of $\frac{48(t')^2N^{(2)}(t')}{720N(t')} + 4\frac{3t'N^{(1)}(t')}{15N(t')} + 2$ comes from having an $a_{56}^{+}$ fragment, a $b_{56}^{+}$ fragment, a $u_{56}^{-}$ fragment, and a $u_{56}^{-}$ fragment.
\item The total contribution from cases 6-8 is 
\[
15\left(24G({\underline{\!\times\!}\,}_3^1)^{2}\right)O(t)\left(\frac{24(t')^2N^{(2)}(t')}{720N(t')} + 2\frac{3t'N^{(1)}(t')}{15N(t')} + 1\right)
\]
where the factor of $\frac{24(t')^2N^{(2)}(t')}{720N(t')} + 2\frac{3t'N^{(1)}(t')}{15N(t')} + 1$ comes from having an $a_{56}^{+}$ fragment, a $b_{46}^{+}$ fragment, a $u_{56}^{-}$ fragment, and a $u_{46}^{-}$ fragment.
\item The total contribution from cases 9-12 is 
\[
15\left(6G({\underline{\!\times\!}\,}_3^1)^{2}\right)O(t)\left(\frac{32(t')^2N^{(2)}(t')}{720N(t')} + 2\frac{3t'N^{(1)}(t')}{15N(t')} + \frac{t'N^{(1)}(t')}{15N(t')} + 1\right)
\]
where the factor of $\frac{32(t')^2N^{(2)}(t')}{720N(t')} + 2\frac{3t'N^{(1)}(t')}{15N(t')} + \frac{t'N^{(1)}(t')}{15N(t')} + 1$ comes from having an $a_{56}^{+}$ fragment, a $b_{34}^{+}$ fragment, a $u_{56}^{-}$ fragment, and a $u_{34}^{-}$ fragment.
\item The total contribution from cases 13-16 is 
\[
15\left(24G({\underline{\!\times\!}\,}_3^1)^{2}\right)O(t)\left(\frac{24(t')^2N^{(2)}(t')}{720N(t')} + 2\frac{3t'N^{(1)}(t')}{15N(t')} + 1\right)
\]
where the factor of $\frac{24(t')^2N^{(2)}(t')}{720N(t')} + 2\frac{3t'N^{(1)}(t')}{15N(t')} + 1$ comes from having an $a_{56}^{+}$ fragment, a $b_{16}^{+}$ fragment, a $u_{56}^{-}$ fragment, and a $u_{16}^{-}$ fragment.
\item The total contribution from cases 17-21 is 
\[
15\left(24G({\underline{\!\times\!}\,}_3^1)^{2}\right)O(t)\left(\frac{32(t')^2N^{(2)}(t')}{720N(t')} + 2\frac{3t'N^{(1)}(t')}{15N(t')} + \frac{t'N^{(1)}(t')}{15N(t')} + 1\right)
\]
where the factor of $\frac{32(t')^2N^{(2)}(t')}{720N(t')} + 2\frac{3t'N^{(1)}(t')}{15N(t')} + \frac{t'N^{(1)}(t')}{15N(t')} + 1$ comes from having an $a_{56}^{+}$ fragment, a $b_{14}^{+}$ fragment, a $u_{56}^{-}$ fragment, and a $u_{14}^{-}$ fragment.
\item The total contribution from cases 22-24 is 
\[
15\left(4G({\underline{\!\times\!}\,}_3^1)^{2}\right)O(t)\left(\frac{24(t')^2N^{(2)}(t')}{720N(t')} + 2\frac{3t'N^{(1)}(t')}{15N(t')} + 1\right)
\]
where the factor of $\frac{24(t')^2N^{(2)}(t')}{720N(t')} + 2\frac{3t'N^{(1)}(t')}{15N(t')} + 1$ comes from having an $a_{26}^{+}$ fragment, a $b_{16}^{+}$ fragment, a $u_{26}^{-}$ fragment, and a $u_{16}^{-}$ fragment.
\item The total contribution from cases 25-28 is 
\[
15\left(12G({\underline{\!\times\!}\,}_3^1)^{2}\right)O(t)\left(\frac{32(t')^2N^{(2)}(t')}{720N(t')} + 2\frac{3t'N^{(1)}(t')}{15N(t')} + \frac{t'N^{(1)}(t')}{15N(t')} + 1\right)
\]
where the factor of $\frac{32(t')^2N^{(2)}(t')}{720N(t')} + 2\frac{3t'N^{(1)}(t')}{15N(t')} + \frac{t'N^{(1)}(t')}{15N(t')} + 1$ comes from having an $a_{26}^{+}$ fragment, a $b_{15}^{+}$ fragment, a $u_{26}^{-}$ fragment, and a $u_{15}^{-}$ fragment.
\item The total contribution from cases 29-31 is 
\[
-15\left(4G({\underline{\!\times\!}\,}_3^1)^{2}\right)O(t)\left(\frac{24(t')^2N^{(2)}(t')}{720N(t')} + 2\frac{3t'N^{(1)}(t')}{15N(t')} + 1\right)
\]
where the factor of $\frac{24(t')^2N^{(2)}(t')}{720N(t')} + 2\frac{3t'N^{(1)}(t')}{15N(t')} + 1$ comes from having an $a_{26}^{+}$ fragment, a $b_{16}^{+}$ fragment, a $u_{26}^{-}$ fragment, and a $u_{16}^{-}$ fragment.
\item The total contribution from cases 32-33 is 
\[
15\left(12G({\underline{\!\times\!}\,}_3^1)^{2}\right)O(t)\left(\frac{8(t')^2N^{(2)}(t')}{720N(t')} + \frac{t'N^{(1)}(t')}{15N(t')}\right)
\]
where the factor of $\frac{8(t')^2N^{(2)}(t')}{720N(t')} + \frac{t'N^{(1)}(t')}{15N(t')}$ comes from having an $a_{25}^{+}$ fragment, a $b_{16}^{+}$ fragment, a $u_{26}^{-}$ fragment, and a $u_{15}^{-}$ fragment.
\item The total contribution from cases 34-35 is 
\[
15\left(8G({\underline{\!\times\!}\,}_3^1)G({\underline{\!\times\!}\,}_5^1)\right)O(t)\left(\frac{3t'N^{(1)}(t')}{15N(t')}+1\right)
\]
where the factor of $\frac{3t'N^{(1)}(t')}{15N(t')}+1$ comes from having a $b_{16}^{+}$ fragment and a $u_{16}^{-}$ fragment.
\item The total contribution from cases 36-37 is 
\[
15\left(12G({\underline{\!\times\!}\,}_3^1)G({\underline{\!\times\!}\,}_5^1)\right)O(t)\left(\frac{3t'N^{(1)}(t')}{15N(t')}+1\right)
\]
where the factor of $\frac{3t'N^{(1)}(t')}{15N(t')}+1$ comes from having a $b_{56}^{+}$ fragment and a $u_{56}^{-}$ fragment.
\end{enumerate}
Summing up these contributions, the total contribution from shells which are consistent with $P_3$ is 
\begin{align*}
&G({\underline{\!\times\!}\,}_3^1)^{2}O(t)\left(60\frac{(t')^2N^{(2)}(t')}{N(t')} + 666\frac{t'N^{(1)}(t')}{N(t')} + 1530\right)\\
&+G({\underline{\!\times\!}\,}_3^1)G({\underline{\!\times\!}\,}_5^1)O(t)\left(60\frac{t'N^{(1)}(t')}{N(t')} + 300\right) + 15G({\underline{\!\times\!}\,}_5^1)^{2}O(t)
\end{align*}
\printbibliography[heading=bibintoc]

\end{document}